\newtheorem{lemma}{Lemma}[section]
\newtheorem{definition}[lemma]{Definition}
\newtheorem{assumption}[lemma]{Assumption}
\newtheorem{theorem}[lemma]{Theorem}
\newtheorem{corollary}[lemma]{Corollary}
\newtheorem{proposition}[lemma]{Proposition}
\newtheorem{remark}[lemma]{Remark}
\newtheorem{example}[lemma]{Example}
\def\RV{{\rm RV}}
\def\RVz{{\rm RV}^0}
\def\R{{\rm I\!R}}
\def\dist{{\rm dist}}
\def\inf{{\rm inf}}
\def\Argmax{\mathop{\rm Arg\,max}}
\def\argmin{\mathop{\rm arg\,min}}
\newcommand{\expCone}{\ensuremath{K_{\exp}}}
\newcommand{\stdCone}{ {\mathcal{K}}}
\newcommand{\stdAffine}{ \mathcal{V}}
\newcommand{\stdFace}{ \mathcal{F}}
\newcommand{\spanVec}{\mathrm{span}\,}
\newcommand{\inProd}[2]{\langle #1 , \, #2 \rangle }
\newcommand{\norm}[1]{\|#1\|}
\newcommand{\reInt}{\mathrm{ri}\,}
\newcommand{\dpp}{d_{\text{PPS}}}
\newcommand{\ds}{d_{\text{S}}}
\newcommand{\comp}{\diamondsuit}
\newcommand{\frakg}{\ensuremath{\mathfrak{g}}}
\DeclarePairedDelimiter{\ceil}{\lceil}{\rceil}
\DeclareMathOperator{\matRank}{rank}
\title{\sf  Convergence analysis under consistent error bounds}
\author{Tianxiang Liu\thanks{School of Computing, Tokyo Institute of Technology,  Japan. (\href{liu.t.af@m.titech.ac.jp}{liu.t.af@m.titech.ac.jp})} \and Bruno F. Louren\c{c}o\thanks{Department of Statistical Inference and Mathematics, Institute of Statistical Mathematics, Japan. (\href{bruno@ism.ac.jp}{bruno@ism.ac.jp})} }
\numberwithin{equation}{section}
\begin{document}
\maketitle

\begin{abstract}
We introduce the notion of \emph{consistent error bound functions} which provides a unifying framework for error bounds for multiple convex sets. This framework  goes beyond the classical Lipschitzian and H\"olderian error bounds and includes logarithmic and entropic error bounds  found in the exponential cone. It also includes the error bounds obtainable under the theory of amenable cones. 
Our main result is that the convergence rate of several projection algorithms for feasibility problems can be expressed explicitly in terms of the underlying consistent error bound function. 
Another feature is the usage of {Karamata theory} and functions of regular variations which allows us to reason about convergence rates while bypassing certain complicated expressions.
Finally, applications to conic feasibility problems are  given and we show that a number of algorithms have convergence rates depending explicitly on the singularity degree of the problem.
\end{abstract}
{\em Key words:} error bounds; consistent error bound; convergence rate; amenable cones; regular variation; Karamata theory.

\section{Introduction}

In this paper, we consider the following convex feasibility problem (CFP)  
\begin{equation}\label{CFP}
{\rm find}\ x\in C: = \bigcap_{i = 1}^mC_i, \tag{CFP}
\end{equation}
where $C_1, \cdots, C_m$ are closed convex sets contained in a finite dimensional real vector space $\mathcal{E}$ with $C\neq\emptyset$. Convex feasibility problems have been extensively studied in connection to various applications, see \cite{AC89,BB96,BLY14,C97,DLW17,NRP19}. 
Then, given some fixed algorithm for solving \eqref{CFP}, the following two 
questions are of natural interest.
\begin{enumerate}[$(1)$]
	\item Does the algorithm converge to a point in $C$?
	\item If it indeed converges, how fast is the convergence?
\end{enumerate}
For question (1), convexity ensures that many algorithms converge without  further assumptions on the $C_i$, see, for example, section~3 of \cite{BB96} and \cite{BT03}.
On the other hand, the answer to question (2) does not generally follow from convexity alone. 

In order to pin down the convergence rate, in many cases it is necessary to assume 
that some \emph{error bound} is known. 
Informally, an  error bound is some inequality that relates 
the individual distances to the sets $C_i$  to the distance to their intersection $C$.
For more information on error bounds in general settings, see \cite{Pang97,LP98}.

We now present a simple example of error bound.
Given $x \in \mathcal{E}$, let $\dist(x,\, C_i)$ denote the distance from $x$ to $C_i$.
Suppose that, for every bounded set $B\subseteq\mathcal{E}$, there exists some $\theta_B > 0$  such that
\begin{equation}
\dist(x,\, C) \le \theta_B\max_{1\le i\le m}\dist(x,\, C_i) \ \ \ \forall\ x\in B. \label{eq:leb}
\end{equation}
In this case, we say that a (local) \emph{Lipschitzian error bound} holds for \eqref{CFP}.
The property given in \eqref{eq:leb} is also called \emph{bounded linear regularity}, see \cite{BBL99}.
Under \eqref{eq:leb}, many common projection methods are known to converge linearly, see \cite{BB96,BT03}. 

If we replace the $\dist(x, C_i) $ by $\dist(x, C_i)^\gamma$ in  \eqref{eq:leb} for some $\gamma \in (0,1]$, we obtain what is called a \emph{H\"olderian error bound}. 
H\"olderian error bounds typically hold under milder conditions than Lipschitzian bounds, although it might be hard to estimate the exponent $\gamma$. 
A notable exception is the H\"olderian error bound 
by Sturm for semidefinite programs \cite{S00}, where the exponent can be, in principle, computed via  a technique called \emph{facial reduction}.

H\"olderian bounds usually only lead to sublinear convergence rates, with the precise rate often depending on the exponent, e.g.,
Corollary~4.6 in \cite{BLY14}. 
It might be fair to say that results such as this are rarer in comparison to convergence rates obtained under \eqref{eq:leb}. Beyond H\"olderian bounds there are even fewer results.

In this paper, we take a bird's eye view and propose the  notion of \emph{consistent error bound functions} (see Definition~\ref{GEB}) which provides a unifying framework for error bounds. Informally, a consistent error bound function is a two-parameter function $\Phi$ satisfying some reasonable properties and the following error bound condition
\begin{equation}\label{eq:int_geb}
\dist(x,\, C) \le \Phi\left(\max_{1 \le i \le m}\dist(x, \, C_i), \, \|x\|\right) \ \ \ \forall\ x\in\mathcal{E}.
\end{equation}
The first argument to $\Phi$ is ``$\max_{1 \le i \le m}\dist(x, \, C_i)$'' which means that the error bound must take into account the individual distances to the sets $C_i$. The second argument is 
``$\norm{x}$'' which reflects the fact that many error bounds correspond to inequalities that are only valid after a bounded subset is specified. Since we will impose coordinate-wise monotonicity of $\Phi$, under \eqref{eq:int_geb}, we have
\[
\dist(x,\, C) \le \Phi\left(\max_{1 \le i \le m}\dist(x,\, C_i), \, \rho\right) \ \ \ \forall\ x, \norm{x} \leq \rho,
\]
if $\rho > 0$ is some fixed constant.
An important property is that consistent error bound functions always exist  whenever \eqref{CFP} is feasible (see~Proposition~\ref{prop:uni}).

One of the main results of this paper is that a number of methods have convergence rates that can be written in terms of $\Phi$, see Theorem~\ref{thm_conv}. This will allow us to cover several previous results and also prove new ones.
For example, we will give a broad extension of the results of \cite{DLW17} and connect the singularity degree of certain conic feasibility problems to the convergence rates of several methods, see Section~\ref{sec:cone}.
Admittedly, for a general consistent error bound function, the expressions governing the convergence rate can be complicated, so we show in Section~\ref{sec:rv} how to use some tools from \emph{Karamata theory} in order to reason about those rates while avoiding certain complicated expressions.

\subsection{Our contributions}
Our contributions are as follows:

\begin{itemize}

\item We introduce a new notion of (strict) \emph{consistent error bound functions} (Definition~\ref{GEB}), which provides a unifying framework for error bounds for multiple convex sets, and includes error bounds beyond classical Lipschitzian and H\"olderian error bounds (Theorem~\ref{hold_geb}). We also show that a ``best'' consistent error bound function always exists for any finite family of convex sets having non-empty intersection (Proposition~\ref{prop:uni}). 

\item Under a \emph{strict} consistent error bound, we prove convergence rates for a number of algorithms fitting an abstract framework which includes many  projection algorithms, see Theorems~\ref{thm_conv} and \ref{thm_sp_4}. In particular, under H\"olderian error bounds, we will also derive precise sublinear rates for those algorithms, see also Corollaries~\ref{coro_abs_rate} and \ref{proj_hold_ge}.

\item We show how Karamata theory and functions of regular variation can be used to reason about the convergence rates obtained in Theorem~\ref{thm_conv} without the need of evaluating the integrals appearing therein, see Theorems~\ref{thm_comp},  \ref{thm:upper} and \ref{thm:lograte}. This will be used to analyze logarithmic and entropic error bounds appearing in some problems associated to the exponential cone, see Section~\ref{sec:exp}. In particular, we show that the convergence rate associated to the entropic error bound has an ``almost linear'' behavior, see Proposition~\ref{prop:exp}. We also provide a thorough analysis of logarithmic error bounds and  corresponding convergence rates, see Section~\ref{sec:log}.

\item  We also specialize our discussion to conic linear feasibility problems where the underlying cone is \emph{amenable} \cite{L19}. In this case, we prove that the convergence rates of several algorithms depend on the \emph{singularity degree} of the problem (see Section~\ref{sec:cone}), which is a quantity related to the facial reduction algorithm \cite{BW81_2,P13,WM13}.
In particular, when the cone is symmetric, we are able to extend a previous result of Drusvyatskiy, Li and Wolkowicz \cite{DLW17} along several directions, see Theorem~\ref{theo:sym_conv}.
\end{itemize}

The rest of the paper is organized as follows. In Section~\ref{sec:notation}, we introduce the notation appearing in the paper. In Section~\ref{sec:geb}, we  introduce the notions of (strict) consistent error bounds and corresponding (strict) consistent error bound functions, and discuss the relationship to H\"olderian error bounds. 
In Section~\ref{sec:proj}, under a \emph{strict} consistent error bound, we establish the convergence analysis for projection algorithms for convex feasibility problems.
Section~\ref{sec:rv} shows how to use Karamata theory to analyze convergence rates. Finally, applications to conic feasibility problems are discussed in Section~\ref{sec:cone}. In particular, Section~\ref{sec:exp} discusses non-H\"olderian error bounds appearing in the study of the exponential cone. Final remarks and future directions are presented in Section~\ref{sec:conc}.

\section{Notation}
\label{sec:notation}
Let $\R$ and $\R_+$ denote the set of real numbers and nonnegative numbers, respectively. Let $\mathcal{E}$ denote a finite-dimensional real vector space equipped with  norm $\|\cdot\|$ induced by some inner product $\langle\cdot,\, \cdot\rangle$. Given  $x\in\mathcal{E}$ and a closed convex set $C \subseteq\mathcal{E}$, we define 
\begin{equation*}
\dist(x,\, C) := \min_{y\in C}\|x - y\|
\end{equation*}
and  let $P_{C}(x)$ denote the projection of $x$ on the set $C$, i.e., $P_C(x) := \argmin_{y\in C}\|x - y\|$. We will denote by $\reInt C, C^\perp, \spanVec C$ the relative interior, orthogonal complement and linear span of $C$, respectively. If $C$ is a cone, we will write $C^*$ for its dual.


 \section{Consistent error bound functions}\label{sec:geb}
Partly motivated by the error bound for amenable cones in \cite{L19}, we propose the following 
notion. 
%
%

\begin{definition}[Consistent error bound functions]\label{GEB}
Let $C_1,\ldots, C_m\subseteq\mathcal{E}$ be closed convex sets with $C:=\bigcap_{i=1}^mC_i\neq\emptyset$.
A function $\Phi :[0, \infty)\times[0, \infty) \to [0, \infty) $ is 
said to be a \emph{consistent error bound function} for $C_1,\ldots, C_m$ if:
\begin{enumerate}[$(i)$]
	\item  the following error bound condition is satisfied:
	\begin{equation}\label{def_eb}
	\dist(x,\, C) \le \Phi\left(\max_{1 \le i \le m}\dist(x, C_i), \, \|x\|\right) \ \ \ \forall\ x\in\mathcal{E};
	\end{equation}
	\item 	for  any fixed $b\ge 0$,  the function $\Phi(\cdot,\, b)$ is monotone nondecreasing on $[0, \infty)$, right-continuous at $0$ and satisfies $\Phi(0,\, b) = 0$;
	\item for any fixed $a\ge 0$, the function $\Phi(a, \, \cdot)$ is monotone nondecreasing on $[0, \infty)$.
\end{enumerate}
In addition, if for every $b > 0$, $\Phi(\cdot, \, b)$ is monotone increasing on $[0, \infty)$ then $\Phi$ is said to be a  \emph{strict} consistent error bound function. We say that \eqref{def_eb} is the \emph{(strict, if $\Phi$ is strict) consistent error bound} associated to $\Phi$.
\end{definition}

\begin{remark}
Definition~\ref{GEB} admits a number of equivalent variations. For example, the individual distances to the sets $C_i$ are aggregated using the \emph{max} function (i.e., $\infty$-norm), however using the sum (i.e., 1-norm) or the square root of the sums-of-squares (i.e., 2-norm) would also be reasonable choices. Because of the equivalence of norms in real finite-dimensional spaces, these variations do not seem to affect significantly the error bound from an asymptotic point of view.
\end{remark}

Next we show that  every $C_1, \ldots, C_m$ with non-empty intersection admit a consistent error bound function. 
\begin{proposition}[The best consistent error bound function]\label{prop:uni}
Let $C_1,\ldots, C_m\subseteq\mathcal{E}$ be closed convex sets with $C:=\bigcap_{i=1}^mC_i\neq\emptyset$. There exists a consistent error 
bound function $\Phi$ for $C_1,\ldots, C_m$ with the property that 
if $\hat \Phi$ is any other consistent error bound function for $C_1,\ldots, C_m$ we have
\begin{equation}\label{eq:uni_unique}
\Phi(a,\, b) \leq \hat \Phi(a,\, b), \quad \forall \, a,\, b \in [0,\infty).
\end{equation}
In particular, $\Phi$ is unique.
\end{proposition}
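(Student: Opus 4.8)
The natural approach is to define $\Phi$ pointwise as the smallest value that any consistent error bound function could possibly take, and then verify that this infimum-type construction is itself a consistent error bound function. Concretely, for $a, b \in [0,\infty)$ I would set
\[
\Phi(a, b) := \sup\left\{ \dist(x, C) \;:\; x \in \mathcal{E}, \ \max_{1 \le i \le m}\dist(x, C_i) \le a, \ \|x\| \le b \right\},
\]
with the convention that the supremum over the empty set is $0$. The motivation is that condition $(i)$ together with the monotonicity conditions $(ii)$ and $(iii)$ force any consistent error bound function $\hat\Phi$ to dominate $\dist(x,C)$ for every $x$ in the ``box'' $\{\max_i \dist(x,C_i) \le a, \|x\| \le b\}$: indeed for such $x$ we have $\dist(x,C) \le \hat\Phi(\max_i \dist(x,C_i), \|x\|) \le \hat\Phi(a, b)$ using monotonicity in both arguments. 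Taking the supremum over all such $x$ gives \eqref{eq:uni_unique} immediately, and uniqueness follows since two functions each dominated by the other must coincide.

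The substantive work is checking that this $\Phi$ is actually a consistent error bound function, i.e., that it satisfies $(i)$, $(ii)$, $(iii)$ of Definition~\ref{GEB}. Properties $(iii)$ and the monotonicity part of $(ii)$ are immediate, since enlarging $a$ or $b$ enlarges the feasible set in the supremum. Property $(i)$ is also essentially by construction: for any given $x$, the point $x$ itself is feasible for the supremum defining $\Phi(\max_i \dist(x,C_i), \|x\|)$, so $\dist(x,C) \le \Phi(\max_i \dist(x, C_i), \|x\|)$. The condition $\Phi(0, b) = 0$ requires an argument: if $\max_i \dist(x, C_i) = 0$ then $x \in C_i$ for all $i$ (each $C_i$ closed), hence $x \in C$ and $\dist(x, C) = 0$; so the supremum is over points with $\dist(x,C) = 0$ and equals $0$. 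Finiteness of $\Phi(a,b)$ (so that it maps into $[0,\infty)$ and not $\{+\infty\}$) follows because the feasible set $\{\|x\| \le b\}$ is bounded, hence $\dist(x, C)$ is bounded on it (e.g.\ by $b + \dist(0, C)$).

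The one genuinely delicate point — and the step I expect to be the main obstacle — is right-continuity of $\Phi(\cdot, b)$ at $0$, i.e., $\lim_{a \downarrow 0}\Phi(a,b) = 0$. This does not follow from pure monotonicity; it is where the geometry (finite dimension, closedness, nonempty intersection) enters. I would argue by contradiction: if $\Phi(a, b)$ does not tend to $0$ as $a \downarrow 0$, there is $\varepsilon > 0$ and a sequence $a_k \downarrow 0$ with $\Phi(a_k, b) > \varepsilon$, hence points $x_k$ with $\|x_k\| \le b$, $\max_i \dist(x_k, C_i) \le a_k$, and $\dist(x_k, C) > \varepsilon/2$. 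Since $\{\|x\|\le b\}$ is compact, pass to a convergent subsequence $x_k \to \bar x$; then $\dist(\bar x, C_i) = \lim_k \dist(x_k, C_i) \le \lim_k a_k = 0$ for each $i$, so $\bar x \in C$, giving $\dist(\bar x, C) = 0$, while continuity of $\dist(\cdot, C)$ forces $\dist(\bar x, C) \ge \varepsilon/2 > 0$ — a contradiction. This compactness argument is short but is the crux; everything else is bookkeeping with monotonicity and the definition of the supremum.
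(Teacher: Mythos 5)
Your proposal is correct and is essentially the paper's own argument: the paper defines $\Phi(a,b)$ as the optimal value of the maximization of $\dist(\cdot,C)$ over the same constraint set (value $0$ if infeasible), verifies monotonicity, $\Phi(0,b)=0$, and the error bound in the same way, handles right-continuity at $0$ by the same compactness/subsequence contradiction, and derives minimality from the same chain of inequalities $\dist(y,C)\le\hat\Phi(\max_i\dist(y,C_i),\|y\|)\le\hat\Phi(a,b)$. The only cosmetic difference is that you prove minimality directly while the paper phrases it as a contradiction, and you bound the supremum without invoking its attainment, which the paper gets from compactness.
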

\begin{proof}
Let $a$ and $b$ be in $[0,\infty)$ and consider the problem below parametrized by $a$ and $b$.
\begin{align}
 \qquad \underset{y}{\sup} & \quad \dist(y,\, C) \label{eq:uni_aux} \tag{U$(a,\, b)$}\\ 
 \mbox{subject to} & \quad \max _{1 \leq i \leq m}\dist(y,\, C_i) \leq a, \nonumber\\
& \quad \norm{y} \leq b. \nonumber
\end{align}	
We define $\Phi$ as follows
\[
\Phi(a,\, b) \coloneqq \begin{cases}
\text{optimal value of \eqref{eq:uni_aux}} & \text{if \eqref{eq:uni_aux} is feasible}\\
0 & \text{otherwise}.
\end{cases}
\]
Because of the norm constraint  in \eqref{eq:uni_aux}, the feasible region of \eqref{eq:uni_aux} is compact although it can be empty. 
Since $\dist(\cdot,C)$ is a continuous function, $\Phi(a,\, b)$ is finite and nonnegative.
Increasing either $a$ or $b$ potentially enlarges the feasible region of \eqref{eq:uni_aux}, 
so $\Phi(\cdot,\, b)$ and $\Phi(a,\, \cdot)$ are monotone nondecreasing.
Furthermore, if $a = 0$, then the only feasible solutions to \eqref{eq:uni_aux} (if any) must be elements of $C$, so $\Phi(0,\, b) = 0$  for every $b$. 

Next, let $x \in \mathcal{E}$, $a = \max_{1 \le i \le m}\dist(x, \, C_i)$ and $b = \norm{x}$.
Then, $y = x$ is feasible for \eqref{eq:uni_aux} and we have
\[
\dist(x,\, C) \leq \Phi(\max _{1 \leq i \leq m}\dist(x,\, C_i), \, \norm{x}).
\]
Therefore, except for the continuity requirement, $\Phi$ satisfies items $(i)$, $(ii)$, $(iii)$. 
So let $b \in [0,\infty)$ and we will check that $\Phi(\cdot,b)$ is (right-)continuous at $0$.
In order to do that, it suffices to show that for any sequence $\{a_k\} \subseteq [0,\infty)$ with $a_k\to 0$, we have $\Phi(a_k,b) \to 0$.  Let $\{a_k\}$ be any such sequence. First, for the $(a_k,b)$ such that ${\rm U}(a_k,b)$ is infeasible, we have
$\Phi(a_{k},b) = 0$. 

Next, we consider the pairs $(a_k,b)$ such that ${\rm U}(a_k,b)$ is feasible.
If there are only finitely 
many such $( a_k,b)$, we must have $\Phi(a_{k},b) \to 0$.
So, suppose that there are infinitely many such $(a_k,b)$ and, for convenience, denote the sequence of the corresponding $a_k$ by $\{\hat{a}_k\}$. We  have 
$\hat a _k \to 0$, since  $\{\hat{a}_k\}$ is a subsequence of $\{a_k\}$.

For each pair $(\hat a_k,b)$, the feasible region of 
${\rm U}(\hat a_k,b)$  is compact, so there exists an optimal solution $y^k$ satisfying 
\begin{equation}\label{eq:uniq}
\dist(y^k,C) = \Phi(\hat a_k,b),\quad \max _{1 \leq i \leq m}\dist(y^k,\, C_i) \leq \hat a_k, \quad \norm{y^k} \leq b.
\end{equation}
Consequently, to show $\Phi(\hat a_k,b) \to 0$, it suffices to prove $\dist(y^{k},\, C)\to 0$. Suppose that $\dist(y^{k},\, C)\to 0$ does not hold. Then there exist some $\delta > 0$ and a subsequence $\{y^{k_j}\}$ such that $\dist(y^{k_j},\, C) \geq \delta$ for all $j$. Since all the $y^k$ are contained in a ball of radius $b$, by passing to a further subsequence if necessary, we may assume that $y^{k_j}$ has a limit 
$\overline{y}$. By \eqref{eq:uniq} and the continuity of $\dist(\cdot,C_i)$ we have $\dist(\overline{y},C_i) = 0$ for all $i$, which implies that $\overline{y} \in C$.
Furthermore, because $\dist(\cdot, C)$ is continuous, we 
have\[
 \dist(y^{k_j},C)    \to \dist(\overline y,C) = 0,
\]
which contradicts the fact that $\dist(y^{k_j},C) \geq \delta > 0$, for every $j$. This proves $\Phi(\hat a_k,b) \to 0$ for the pairs $(\hat a_k,b)$ such that  ${\rm U}(\hat a_k,b)$ is feasible. Accordingly, we must have $\Phi(a_k,b) \to 0$.
The (right-) continuity of $\Phi(\cdot,b)$ at 0 then follows from the arbitrariness of $\{a_k\}$. 

Finally, in order to show that \eqref{eq:uni_unique} holds, let $\hat \Phi$ be another consistent error bound function for $C_1, \ldots, C_m$. 
For the sake of obtaining a contradiction, suppose that there exist $a,\, b$ such that 
\[
\Phi(a,\, b) >  \hat \Phi(a,\, b),
\]
With that, the corresponding problem \eqref{eq:uni_aux} must be feasible, because otherwise 
we would have $\Phi(a,\, b) = 0$.
Then, since $\Phi(a,\, b)$ is the optimal value of \eqref{eq:uni_aux}, there exists a feasible solution $y$ such that $\Phi(a,\, b) \geq \dist(y,\, C) >  \hat \Phi(a,\, b)$. However,
\[
\dist(y,\, C) \leq \hat \Phi(\max_{1 \le i \le m}\dist(y, \, C_i),\, \norm{y}) \leq \hat \Phi(a,\, b),
\]
where the second inequality follows because $y$  is feasible for \eqref{eq:uni_aux} and $\hat \Phi$ satisfies items $(ii)$ and $(iii)$ of Definition~\ref{GEB}.
Together with  $\dist(y,\, C) >  \hat \Phi(a,\, b)$, we obtain a contradiction.
This shows $\Phi$ satisfies \eqref{eq:uni_unique} and that $\Phi$ must be the unique consistent error bound function for which \eqref{eq:uni_unique} holds.
\end{proof}
We call the function defined in Proposition~\ref{prop:uni} the \emph{best consistent error bound function} for $C_1, \ldots, C_m$ and, in a sense, reflects the tightest possible error bound one can get for the $C_is$. 
We remark that any consistent error bound function $\Phi$ can be made strict as follows. Let $\kappa > 0$ be a constant and let 
\begin{equation*}
\hat\Phi(a,\, b) \coloneqq \Phi(a,\, b) + \kappa a, \quad \forall \, a,\, b \in [0,\infty).
\end{equation*}
Then, $\hat \Phi$ is a consistent error bound function for the same sets that is also strict.
Therefore, Proposition~\ref{prop:uni} also implies the existence of strict consistent error bound functions.

\subsection{H\"olderian and Lipschitzian error bounds}\label{sec:hold}
It turns out that  consistent error bounds include a large variety of existing error bounds. First, we will show that H\"olderian error bounds are covered. Other examples of error bounds will be seen in
Section~\ref{sec:log}, Section~\ref{sec:sym} and Section~\ref{sec:exp}.
We recall the following definition.
\begin{definition}[H\"olderian error bound]\label{def_hold}
	The sets $C_1,\ldots, C_m\subseteq\mathcal{E}$ with $C:=\bigcap_{i=1}^mC_i\neq\emptyset$ are said to satisfy a H\"olderian error bound if for every bounded set $B\subseteq\mathcal{E}$ there exist some $\theta_B > 0$ and an exponent $\gamma _B\in(0, 1]$ such that
	\begin{equation*}
	\dist(x,\, C) \le \theta_B\max_{1\le i\le m}\dist^{\gamma_B}(x, \, C_i) \ \ \ \forall\ x\in B.
	\end{equation*}
	If we can take the same exponent $\gamma _B = \gamma \in (0,1]$ for all $B$, then we say 
	that the bound is \emph{uniform}. Furthermore, 
	if the bound is uniform with $\gamma = 1$, we call it a Lipschitzian error bound.
\end{definition}

\begin{theorem}[Characterization of H\"olderian error bounds]\label{hold_geb}
Let $C_1,\ldots, C_m\subseteq\mathcal{E}$ be convex sets with $C:=\bigcap_{i=1}^mC_i\neq\emptyset$. 
\begin{enumerate}[$(i)$]
	\item $C_1,\ldots, C_m$ satisfy a H\"olderian error bound if and only if there are monotone nonincreasing $\gamma: [0,\infty) \to (0,1]$ and monotone nondecreasing $\rho : [0,\infty) \to (0,\infty)$ such that the following function is  a strict consistent error bound function for $C_1,\ldots,C_m$:
	\begin{equation}\label{phi_hold}
	\Phi(a,\, b) \coloneqq \rho(b)\max(a^{\gamma(b)},\, a).
	\end{equation}

	\item $C_1,\ldots, C_m$ satisfy a uniform H\"olderian error bound with exponent $\gamma\in(0, 1]$ if and only if there exists a monotone nondecreasing $\rho : [0,\infty) \to (0,\infty)$ such that the following function is  a strict consistent error bound function for $C_1,\ldots,C_m$:
	\begin{equation}\label{phi_hold_u}
	\Phi(a,\, b) \coloneqq \rho(b)a^{\gamma}.
	\end{equation}
	
\end{enumerate}
\end{theorem}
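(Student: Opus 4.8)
The plan is to prove both directions of each equivalence, handling (i) and (ii) together since (ii) is essentially the special case where $\gamma(b)$ is constant.

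For the ``if'' direction, suppose $\Phi(a,b) = \rho(b)\max(a^{\gamma(b)},a)$ is a consistent error bound function (in case (i)) or $\Phi(a,b)=\rho(b)a^\gamma$ (in case (ii)). Given a bounded set $B$, pick $\rho_B > 0$ with $\norm{x}\le\rho_B$ for all $x\in B$. For $x\in B$, monotonicity in the second argument (item $(iii)$) gives $\dist(x,C)\le\Phi(\max_i\dist(x,C_i),\rho_B)=\rho(\rho_B)\max(a^{\gamma(\rho_B)},a)$ where $a=\max_i\dist(x,C_i)$. Since $a=\dist(x,C_i)$ for some $i$ and $a\le\mathrm{diam}$-type bound on $B$, we can absorb the $\max(a^{\gamma(\rho_B)},a)$ into a single term $\theta_B a^{\gamma_B}$ with $\gamma_B=\gamma(\rho_B)$ and $\theta_B$ depending on $\rho(\rho_B)$ and the bound on $a$ over $B$ (using that $a^{\gamma_B}\ge a$ when $a\le 1$ and $a\le a^{\gamma_B}\cdot a^{1-\gamma_B}\le a^{\gamma_B}\cdot M^{1-\gamma_B}$ when $a\le M$). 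This yields the Hölderian bound; in case (ii) the same argument works with $\gamma_B\equiv\gamma$.

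For the ``only if'' direction, suppose $C_1,\dots,C_m$ satisfy a Hölderian error bound. The idea is to construct $\gamma$ and $\rho$ from the data $\theta_B,\gamma_B$ attached to balls. For $b\ge 0$, apply the bound to the ball $B_b:=\{x:\norm{x}\le b\}$ (or a slightly larger ball to ensure nonemptiness issues are fine) to get constants $\theta(b):=\theta_{B_b}$ and $\gamma_0(b):=\gamma_{B_b}\in(0,1]$; then define $\gamma(b)$ to be a monotone nonincreasing ``lower envelope'' of $\gamma_0$, e.g.\ $\gamma(b):=\inf_{0\le t\le b}\gamma_0(t)$ wait this could be $0$, so instead take $\gamma(b):=\inf_{t\le b}\gamma_0(t)$ only after first passing to a version of the Hölderian bound where smaller exponents are still valid on smaller balls — note that if the bound holds with exponent $\gamma_B$ on $B$, it also holds with any smaller exponent on $B\cap\{a\le 1\}$ and we can enlarge $\theta_B$ to handle $a>1$, so $\gamma_0$ can be taken monotone nonincreasing from the start. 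Similarly define $\rho(b)$ from a monotone nondecreasing envelope of $\theta(b)$, enlarging if necessary so that $\rho(b)>0$ and so that the $\max(a^{\gamma(b)},a)$ form dominates $\theta_{B_b}a^{\gamma_0(b)}$ for all $a\ge 0$ with $\norm{x}\le b$. One then checks that $\Phi(a,b)=\rho(b)\max(a^{\gamma(b)},a)$ satisfies $(i)$: for $x$ with $\norm{x}\le b$, $\dist(x,C)\le\theta_{B_b}a^{\gamma_0(b)}\le\rho(b)\max(a^{\gamma(b)},a)$ using $\gamma(b)\le\gamma_0(b)$ and the domination; $(ii)$: monotone increasing in $a$ (hence strict), right-continuous at $0$ with value $0$ since $\max(0,0)=0$; $(iii)$: monotone nondecreasing in $b$ by construction of $\rho,\gamma$. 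Case (ii) is the same construction with $\gamma_0\equiv\gamma$ constant, giving $\Phi(a,b)=\rho(b)a^\gamma$ (and one absorbs any leftover $\max$ with $a$ into $\rho$, since on $\{a\le 1\}$, $a^\gamma\ge a$, and on $\{a>1,\norm{x}\le b\}$, $a$ is bounded by $2b$ say so $a\le (2b)^{1-\gamma}a^\gamma$, rescaling $\rho$).

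The main obstacle I anticipate is the bookkeeping in the ``only if'' direction: making sure the pointwise-in-$b$ constants $\theta_B,\gamma_B$ from Definition~\ref{def_hold}, which are only required to exist for \emph{each} bounded set with no a priori monotonicity or measurability, can be reorganized into genuinely monotone functions $\rho,\gamma$ of a single real parameter $b$ while simultaneously converting the two-sided behavior (the Hölderian bound controls only $a\le\mathrm{diam}(B)$) into the single clean expression $\max(a^{\gamma(b)},a)$ valid for \emph{all} $a\ge 0$. The trick is that since \eqref{def_hold}'s bound is vacuous for $a$ larger than the diameter of the relevant ball, the $\max$ with $a$ in \eqref{phi_hold} is precisely what patches the bound to hold globally in $a$; this needs to be stated carefully but is not deep. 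A secondary subtlety is verifying strictness, which is immediate since $a\mapsto\max(a^{\gamma(b)},a)$ is strictly increasing for every fixed $b$ (both branches are), so $\Phi(\cdot,b)$ is strictly increasing whenever $\rho(b)>0$, which we arrange.
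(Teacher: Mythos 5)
Your ``if'' directions and item $(ii)$ are essentially sound and follow the same route as the paper: bound $\norm{x}$ by the radius of a ball containing $B$, use monotonicity in the second argument, and absorb the $\max(a^{\gamma},a)$ term using the boundedness of $a=\max_i\dist(x,C_i)$ on $B$; for the uniform case, the fixed exponent makes the minimal valid constant on the ball of radius $b$ automatically monotone in $b$ (the paper realizes it via the optimization problem defining $\rho(b)$), so no coordination issue arises there.

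The genuine gap is in the ``only if'' direction of item $(i)$, exactly at the step you flagged as ``bookkeeping'': your recipe -- pick an admissible exponent $\gamma_0(b)$ for each ball $B_b$, arrange $\gamma_0$ monotone nonincreasing by decreasing exponents, then take a monotone nondecreasing envelope of the associated constants $\theta(b)$ -- does not produce a function $\rho:[0,\infty)\to(0,\infty)$ in general. You noticed that $\inf_{t\le b}\gamma_0(t)$ can vanish, but the dual problem is worse and goes unaddressed: even if $\gamma_0$ is monotone, positive, and pointwise admissible, the minimal constants for the \emph{chosen} exponents can blow up on a bounded range of radii, so $\sup_{t\le b}\theta(t)=+\infty$ for finite $b$. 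For instance, take $C_1=\{(x,y):y\ge (x-1)^2\}$, $C_2=\{(x,y):y\le 0\}$, so $C=\{(1,0)\}$: on balls $B_t$ with $t<1$ every exponent up to $1$ is admissible (with constants exploding as $t\to 1^-$), while for $t\ge 1$ only exponents $\le 1/2$ are; the monotone selection $\gamma_0(t)=3/4$ for $t<1$, $\gamma_0(t)=1/4$ for $t\ge 1$ is perfectly legitimate under your rules, yet along points $(t,0)$ the ratio $\dist(\cdot,C)/d(\cdot)^{3/4}\approx(1-t)^{-1/2}\to\infty$, so any envelope of the corresponding constants is infinite at $b=1$. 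The moral is that the exponent selection must be coordinated with the constants, and the paper does this by \emph{discretizing}: it works only with integer radii $U_k$, picks $\hat\gamma_k$ strictly below both $\hat\gamma_{k-1}$ and the supremum $\gamma_k$ of admissible exponents for $U_k$, picks nondecreasing constants $\theta_k$ valid for $(U_k,\hat\gamma_k)$, and then defines $\gamma(b)\coloneqq\hat\gamma_{\ceil{b}}$, $\rho(b)\coloneqq\theta_{\ceil{b}}$ as step functions. Because the exponent is constant on each interval of radii $(k-1,k]$, a single finite constant serves all $b$ there, which is precisely what your continuum envelopes fail to guarantee. With this discretization substituted for your envelope construction (and your remark that the $\max(a^{\gamma(b)},a)$ form makes smaller exponents admissible without restricting to $a\le 1$, which is the paper's equivalence between \eqref{eq:hod_geb} and \eqref{eq:hod_geb2}), the rest of your argument goes through.
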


\begin{proof}
In what follows, we let $d$ be the function such that
\[
d(x) = \max_{1\le i\le m}\dist(x, \, C_i).
\]	
First we prove item $(i)$. Suppose that $C_1,\ldots, C_m$ satisfy a H\"olderian error bound. Let $B$ be any fixed bounded set.
 From Definition~\ref{def_hold}, there exist $\theta_B > 0$ and an exponent $\gamma_B\in(0, 1]$ such that
\begin{equation}\label{eq:hod_geb}
  \dist(x, \, C) \le \theta_Bd(x)^{\gamma_B} \ \ \ \forall\ x\in B.
\end{equation}
Equivalently, we have
\begin{equation}\label{eq:hod_geb2}
\dist(x, \, C) \le \theta_B\max( d(x)^{\gamma_B},\, d(x)) \ \ \ \forall\ x\in B.
\end{equation}
The equivalence between \eqref{eq:hod_geb} and \eqref{eq:hod_geb2} is as follows.
If $\gamma_B \in (0,1]$ is an exponent such that \eqref{eq:hod_geb} holds for some constant $\theta_B$, then \eqref{eq:hod_geb2} holds. Conversely, suppose that \eqref{eq:hod_geb2} holds for some $\gamma_B$ and some constant $\theta _B$. 
Then \eqref{eq:hod_geb} holds with the same $\gamma_B$ and
constant $\theta _B \max(1, \sup _{x \in B} d(x)^{1-\gamma_B})$.

With that in mind, given  a bounded set $B$, we say that $\gamma$ is an \emph{admissible 	exponent for $B$} if there exists a constant $\theta _B$ such that \eqref{eq:hod_geb} or \eqref{eq:hod_geb2} holds.
Next, we verify the following property: if $\gamma$ is an admissible exponent for $B$, then any $\hat \gamma \in (0,\, \gamma)$ is an admissible exponent for $B$. 
This is because
\[
\max( a^{\gamma},\,a) \leq \max( a^{\hat \gamma},\, a) \quad \forall a\geq0.
\]
For $r > 0$, we let $\gamma _r$ denote the supremum of all admissible exponents 
for $U_r \coloneq \{y : \norm{y} \leq r\}$. Then, $\gamma_r$ has the following property:
\begin{enumerate}[$(a)$]
	\item any $0 <\gamma < \gamma _r$ is an admissible exponent for $U_r$, although $\gamma _r$ itself might not necessarily be admissible.
\end{enumerate}

We will now construct a sequence of admissible exponents $\hat \gamma _k$ for 
the neighbourhoods $U_k$ together with constants 
$\theta _k$, for all positive integer $k$.
First, we let $\hat \gamma _1$ to be any admissible exponent for $U_1$ such that 
$\hat \gamma _1 < \gamma _1$ together with a constant $\theta _1 \geq 1$ such that 
\eqref{eq:hod_geb2} holds with $\gamma = \hat \gamma _1$ and $B = U_1$.

For $k > 1$ we proceed as follows. We let $\hat \gamma _{k}$ be any admissible exponent for $U_k$ satisfying 
\[
\hat \gamma _{k} < \min \{\hat \gamma _{k-1},\, \gamma _{k}\},
\]
which is possible in view of property $(a)$.

Then, we select $\theta _k$ such that \eqref{eq:hod_geb2} holds for $\gamma = \hat \gamma _{k}, B = U_k$ and such that 
\[
\theta _k \geq \theta _{k-1},
\]
which is possible because if \eqref{eq:hod_geb2} is satisfied for some constant $\theta _B$, it is still satisfied for any constant larger than $\theta _B$.

Now, we define functions $\gamma: [0,\infty) \to (0,1]$ and $\rho: [0,\infty) \to (0,\infty)$ that 
interpolate the values of $\hat \gamma _k$ and $\theta _k$.  For that, given a nonnegative real $a$, we define $\ceil{a}$ to be smallest integer satisfying $a \leq \ceil{a}$.
Then, we define
\begin{equation*}
\gamma(a) \coloneq \begin{cases}\hat\gamma _{\ceil{a}} & \text{ if }a > 0 \\ \hat \gamma _1 & \text{ if } a = 0\end{cases}, \qquad \rho (b) \coloneq \begin{cases}
\theta _{\ceil{b}} & \text{ if }b > 0\\
\theta _{1} & \text{ if } b  = 0
\end{cases} .
\end{equation*}
By the construction of $\hat \gamma_k$ and $\theta _k$, both $\gamma$ and $\rho$ are, respectively, monotone nonincreasing and monotone nondecreasing.
Next, we let $\Phi$ be such that 
\[
\Phi(a,\, b) \coloneqq \rho(b)\max(a^{\gamma(b)},\, a).
\]
Let $a,b \in [0,\infty)$ be arbitrary. The monotonicity of $\gamma$ and $\rho$, and $\gamma(\cdot)\in(0,\,1]$ imply that $\Phi(\cdot, \, b)$ and 
$\Phi(a,\, \cdot)$ are monotone increasing and monotone nondecreasing, respectively. For any fixed $b\in[0,\, \infty)$, function $\Phi(\cdot,\, b)$ is right-continuous at $0$. 
We also have $\Phi(0,\, b) = 0$.
Furthermore, if $x \in \mathcal{E}$ arbitrary, then $x \in U _{\ceil{\norm{x}}}$, so 
\[
\dist(x, \, C) \le \rho(\norm{x}) \max( d(x)^{\gamma({\norm{x}})},\, d(x)) = 
\Phi(d(x),\, \norm{x}),
\]
therefore, $\Phi$ is indeed a strict consistent error bound function.

For the converse, we suppose that \eqref{phi_hold} is satisfied and we need to show that 
$C_1, \ldots, C_m$ satisfy a H\"olderian error bound. Let $B$ a bounded set and 
let $r$ be the supremum of the norm of the elements of $B$. Then, $B$ is contained in a ball of radius $r$. Therefore, for $x \in B$ we have
\begin{align*}
\dist(x,\, C)& \le \Phi(d(x),\, \norm{x})  \\
& = \rho(\norm{x}) \max( d(x)^{\gamma({\norm{x}})},\, d(x))\\
& \leq \rho(r) \max( d(x)^{\gamma({\norm{x}})},\, d(x)),
\end{align*}
where the last inequality follows from the monotonicity of $\rho$.
By the equivalence between \eqref{eq:hod_geb} and \eqref{eq:hod_geb2}, we conclude 
that a H\"olderian error bound holds.
This concludes the proof of $(i)$.

We move on to $(ii)$.
First, we suppose that a uniform H\"olderian error bound with exponent $\gamma$ holds 
for $C_1, \ldots, C_m$.
Let $\rho(b)$ be the solution of the following optimization problem:
\begin{equation}\label{cons_prob}
\begin{split}
\rho(b): = & \argmin_{\alpha \ge 1} \ \alpha \\
& {\rm s.t.}\ \  \dist(y,\, C) \le \alpha\,\left(\max_{1\le i\le m}\dist(y,\, C_i)\right)^{\gamma} \ \ \ \forall\ y \text{ satisfying } \norm{y} \leq b.
\end{split}
\end{equation}
From the definition of H\"{o}lderian error bound (Definition~\ref{def_hold}) the feasible set of \eqref{cons_prob} is nonempty for every $b \geq 0$. Furthermore, the feasible set of \eqref{cons_prob} is closed and convex. Therefore, the solution of \eqref{cons_prob} is unique. Consequently, $\rho(b)$ is well-defined and $\rho$ is monotone nondecreasing. 
Finally, we have
\begin{equation*}
\dist(x, \, C) \le \rho(\|x\|)\,\left(\max_{1\le i\le m}\dist(x,\, C_i)\right)^{\gamma},  \quad \forall\ x\in\mathcal{E}.
\end{equation*}
By the monotonicity of $\rho(\cdot)$, we conclude that Definition~\ref{GEB} is satisfied for $\Phi(a,\, b) = \rho(b)\, a^{\gamma}$.

For the converse, suppose that \eqref{phi_hold_u} holds.
 Let $B$ a bounded set and let $r$ be the supremum of the norm of the elements of $B$. Then, $B$ is contained in a ball of radius $r$. Therefore, for $x \in B$ we have
\begin{align*}
\dist(x,\, C)& \le \phi(d(x),\, \norm{x})  = \rho(\norm{x}) d(x)^\gamma   \leq \rho(r) d(x)^\gamma,
\end{align*}
where the last inequality follows from the monotonicity of $\rho$.
\end{proof}

\begin{example}\label{example}
	It is known that certain constraint qualifications imply Lipschitzian error bounds, see \cite[Corollary~3]{BBL99} or \cite[Theorem~3.1]{BT03}.	
	For conditions ensuring the existence of 
	H\"olderian error bounds see \cite[Theorem~3.3]{S00} (linear matrix inequalities), \cite[Theorem~37]{L19} (symmetric cones), \cite[Theorem~3.6]{BLY14} (basic semialgebraic convex sets). These references 
	all include information on how to estimate the 
	exponent of the error bound, which can be quite nontrivial in more general settings. For more on this difficulty, see the comments after Theorems 11 and 13  in \cite{Pang97}.

\end{example}

\section{Convergence analysis under consistent error bounds}\label{sec:proj}

In this section, we show how to connect consistent error bound functions to the convergence rate of a number of algorithms for solving \eqref{CFP}.  
Before proceeding, we introduce a key tool for our analysis - inverse smoothing functions constructed from \emph{strict} consistent error bound functions.

\subsection{Inverse smoothing function from strict consistent error bound function}\label{sub_sec_po}

Let $\Phi$ be a \emph{strict} consistent error bound function as in Definition~\ref{GEB}.  Then, for $\kappa > 0$, we define $\phi_{\kappa, \Phi}$ as follows: 
\begin{equation}\label{def_psi}     
\phi_{\kappa, \Phi}(t)  := \left(\Phi(\sqrt{t},\, \kappa)\right)^2,\ \ \   t\ge 0.
\end{equation}
 The following lemma follows directly from the properties of $\Phi$ in Definition~\ref{GEB}.
\begin{lemma}\label{lemma_varphi}
Let $\phi_{\kappa, \Phi}$ be defined as in \eqref{def_psi}. Then $\phi_{\kappa, \Phi}(0) = 0$, $\phi_{\kappa, \Phi}(\cdot)$ is  monotone increasing on $[0, \infty)$ and 
right-continuous at $0$. Moreover, we have $\phi_{\kappa_1, \Phi}(t) \le \phi_{\kappa_2, \Phi}(t)$ for all $t$ whenever $\kappa_1\le \kappa_2$.
\end{lemma}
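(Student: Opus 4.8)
The plan is to verify each asserted property of $\phi_{\kappa,\Phi}$ directly from the definition $\phi_{\kappa,\Phi}(t) = (\Phi(\sqrt{t},\,\kappa))^2$ together with the three defining properties of a strict consistent error bound function in Definition~\ref{GEB}. The key observation is that $\phi_{\kappa,\Phi}$ is obtained from $\Phi(\cdot,\,\kappa)$ by pre-composing with the map $t \mapsto \sqrt{t}$ and post-composing with $s \mapsto s^2$, both of which are continuous, monotone increasing bijections of $[0,\infty)$ onto itself that fix $0$. Since such compositions preserve the properties ``equals $0$ at $0$'', ``monotone increasing'', and ``right-continuous at $0$'', all the needed facts follow once we record that $\Phi(\cdot,\,\kappa)$ itself has these properties.

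Concretely, first I would note that $\phi_{\kappa,\Phi}(0) = (\Phi(\sqrt{0},\,\kappa))^2 = (\Phi(0,\,\kappa))^2 = 0$ by item $(ii)$ of Definition~\ref{GEB}. Next, for the monotonicity: given $0 \le t_1 < t_2$, we have $\sqrt{t_1} < \sqrt{t_2}$; since $\kappa > 0$ and $\Phi$ is \emph{strict}, item $(ii)$ (in its strict form) gives $\Phi(\sqrt{t_1},\,\kappa) < \Phi(\sqrt{t_2},\,\kappa)$, and squaring two nonnegative numbers preserves the strict inequality, so $\phi_{\kappa,\Phi}(t_1) < \phi_{\kappa,\Phi}(t_2)$. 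This is exactly the point where strictness of $\Phi$ is used — with only a nondecreasing $\Phi$ we would only get $\phi_{\kappa,\Phi}$ nondecreasing. For right-continuity at $0$: if $t_k \downarrow 0$ then $\sqrt{t_k} \downarrow 0$, so by the right-continuity of $\Phi(\cdot,\,\kappa)$ at $0$ we get $\Phi(\sqrt{t_k},\,\kappa) \to \Phi(0,\,\kappa) = 0$, and hence $\phi_{\kappa,\Phi}(t_k) = (\Phi(\sqrt{t_k},\,\kappa))^2 \to 0 = \phi_{\kappa,\Phi}(0)$ by continuity of squaring.

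Finally, for the comparison $\phi_{\kappa_1,\Phi}(t) \le \phi_{\kappa_2,\Phi}(t)$ when $\kappa_1 \le \kappa_2$: fix $t \ge 0$ and set $a = \sqrt{t}$. By item $(iii)$ of Definition~\ref{GEB}, the function $\Phi(a,\,\cdot)$ is monotone nondecreasing, so $\Phi(a,\,\kappa_1) \le \Phi(a,\,\kappa_2)$; since both sides are nonnegative, squaring preserves the inequality, giving $\phi_{\kappa_1,\Phi}(t) = (\Phi(a,\,\kappa_1))^2 \le (\Phi(a,\,\kappa_2))^2 = \phi_{\kappa_2,\Phi}(t)$.

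There is no real obstacle here — the lemma is essentially a bookkeeping exercise, and the only subtlety worth flagging explicitly is that strict monotonicity of $\phi_{\kappa,\Phi}$ genuinely requires the \emph{strict} hypothesis on $\Phi$ (this is why the lemma is stated for strict consistent error bound functions), whereas the other three conclusions would hold for any consistent error bound function. I would present the argument in one short paragraph treating the four claims in sequence, without belaboring the monotonicity/continuity preservation properties of $\sqrt{\cdot}$ and $(\cdot)^2$.
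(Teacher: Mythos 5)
Your proof is correct and is exactly the argument the paper has in mind: the paper gives no separate proof, simply noting that the lemma "follows directly from the properties of $\Phi$ in Definition~\ref{GEB}", and your verification (including the correct observation that strictness of $\Phi$ together with $\kappa>0$ is what yields strict monotonicity of $\phi_{\kappa,\Phi}$) fills in precisely those routine details.
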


 Before proceeding, we define the \emph{generalized inverse function} for any monotone increasing function $f: \R_+\to \R_+$ as:
\begin{equation}\label{inv_fun}
f^{-}(s): = \inf\left\{t\ge 0: f(t) \ge s \right\}, \ \ 0\le s < \sup f,
\end{equation}
see \cite{EH13} for more details on generalized inverses. 
Any monotone increasing function has an inverse $f^{-1}$ in the usual sense, but $f^{-}$ fixes a number of deficiencies that $f^{-1}$ might have when $f$ is not continuous everywhere. 
However, if $f$ is both continuous and monotone increasing, then $f^- = f^{-1}$, see \cite[Remark~1]{EH13}. The proof of the following lemma about the properties of $f^-$ is given in Appendix~\ref{appendix_a}.
 


\begin{lemma}[Properties of the generalized inverse]\label{inv_lemma}
Let $f: \R_+\to \R_+$ be a monotone increasing function with $f(0) = 0$. Define $f^{-}$ as in \eqref{inv_fun}. Then, $f^{-}$ is monotone nondecreasing, $f^{-}(0) = 0$ and the following statements hold:
\begin{enumerate}[$(i)$]

\item \label{inv_lemma:1}   if $f$ is (right-)continuous at $0$, then $f^{-}(s) > 0$ for all $s\in(0,\, \sup f)$;

\item \label{inv_lemma:2} for any $s\ge 0, t\geq 0$
such that $s\le f(t)$ holds, we have $s < \sup f$ and $f^{-}(s)\le t$;
\item \label{inv_lemma:3} 
 for any $s\ge 0, t\geq 0$ such that  $s < \sup f$ and $f(t) < s$ holds, we have $t\le f^{-}(s)$;


\item \label{inv_lemma:4} $f^{-}$ is continuous on $(0,\, \sup f)$.

\end{enumerate}
\end{lemma}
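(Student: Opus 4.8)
The plan is to work directly from the definition $f^{-}(s) = \inf\{t \ge 0 : f(t) \ge s\}$ and exploit monotonicity of $f$ throughout. First, the basic claims: $f^{-}$ is monotone nondecreasing because if $s_1 \le s_2$, then the set $\{t : f(t) \ge s_2\}$ is contained in $\{t : f(t) \ge s_1\}$, so the infimum over the former is at least the infimum over the latter. For $f^{-}(0) = 0$, note that $f(0) = 0 \ge 0$, so $0$ belongs to $\{t : f(t) \ge 0\}$, forcing the infimum to be $0$. I would state these two facts at the outset since they are used implicitly later.

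For item~\eqref{inv_lemma:1}, suppose $f$ is right-continuous at $0$ and let $s \in (0, \sup f)$. If $f^{-}(s) = 0$, then there is a sequence $t_k \downarrow 0$ with $f(t_k) \ge s$; by right-continuity $f(t_k) \to f(0) = 0$, contradicting $s > 0$. Hence $f^{-}(s) > 0$. For item~\eqref{inv_lemma:2}, if $s \le f(t)$ then $s \le f(t) < \sup f$ is not quite immediate — rather $s \le f(t) \le \sup f$, and since $s < \sup f$ requires a strict inequality, I would argue: either $s < \sup f$ directly, or handle the boundary by noting $f(t)$ is attained so $s \le f(t)$ with $f(t)$ a genuine value less than the supremum unless $f$ is bounded and attains its sup, in which case one still has $s \le f(t) < \sup f$ when $s$ is below that attained value — this edge case needs a careful one-line treatment. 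Granting $s < \sup f$, the point $t$ lies in $\{t' : f(t') \ge s\}$, so $f^{-}(s) \le t$ by definition of infimum. Item~\eqref{inv_lemma:3}: if $f(t) < s$ with $s < \sup f$, then by monotonicity every $t'$ with $f(t') \ge s$ satisfies $t' > t$ (since $f(t') \ge s > f(t)$ precludes $t' \le t$), hence $f^{-}(s) = \inf\{t' : f(t') \ge s\} \ge t$.

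For item~\eqref{inv_lemma:4}, continuity of $f^{-}$ on $(0, \sup f)$: since $f^{-}$ is monotone nondecreasing on an interval, it can only have jump discontinuities, so it suffices to rule these out. A jump at $s_0$ would mean $\lim_{s \uparrow s_0} f^{-}(s) = a < b = \lim_{s \downarrow s_0} f^{-}(s)$, leaving an interval $(a,b)$ in the range-gap; I would derive a contradiction by picking $t \in (a,b)$ and showing via items~\eqref{inv_lemma:2}–\eqref{inv_lemma:3} that $f(t)$ would have to equal $s_0$ on a whole interval of $t$-values, which is fine for $f$ but then forces $f^{-}(s_0^+) \le$ the left endpoint of that interval $\le a$, contradicting $b > a$. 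The cleanest route is probably: a left-jump of $f^{-}$ at $s_0$ corresponds to $f$ being constant $= s_0$ on some interval to the left of $f^{-}(s_0)$, which contradicts $f$ being \emph{strictly} increasing. This is the step I expect to be the main obstacle — getting the interaction between jumps of $f^{-}$ and the strict monotonicity of $f$ stated precisely without hand-waving — and it is where the hypothesis "monotone increasing" (strictly) rather than merely nondecreasing is essential.
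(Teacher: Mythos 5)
Your plan follows the paper's own proof almost step for step: the preliminary facts ($f^{-}$ nondecreasing, $f^{-}(0)=0$), the sequence-plus-right-continuity contradiction for item $(i)$, and the treatment of item $(iv)$ by supposing a jump of $f^{-}$ at some $s_0$, picking $t$ in the gap interval, and using items $(ii)$--$(iii)$ to force $f(t)=s_0$ on a nondegenerate interval, contradicting strict monotonicity --- this is exactly the argument in the appendix. Your item $(iii)$ is in fact slightly more direct than the paper's: the paper goes through the identity $f^{-}(f(t))=t$ and then monotonicity of $f^{-}$, whereas you observe directly that every $t'$ with $f(t')\ge s$ must satisfy $t'>t$, so the infimum is at least $t$; both are correct.

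The one place where your plan does not close is the claim $s<\sup f$ in item $(ii)$. The resolution is a single observation you circle around but never state: because $f$ is (strictly) monotone increasing on all of $[0,\infty)$, its supremum is never attained --- for every $t$ one has $f(t)<f(t+1)\le \sup f$ --- and hence $s\le f(t)<\sup f$ automatically. Your hedged case analysis (``unless $f$ is bounded and attains its sup, in which case one still has $s \le f(t) < \sup f$ when $s$ is below that attained value'') is both vacuous, since that case cannot occur, and incorrect as a fallback, since it silently ignores the only problematic subcase $s=f(t)=\sup f$. Replace that sentence with the observation above (which is precisely how the paper handles it) and the proof is complete and essentially identical to the paper's.
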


Next, we will introduce the ace of our toolbox: the so-called \emph{inverse smoothing function} associated to $\Phi$.
For $\kappa > 0$ and for $\phi_{\kappa, \Phi}$ as in \eqref{def_psi} we define $\Phi_{\kappa}^\spadesuit$ as
\begin{equation}\label{def_phi}
\Phi_{\kappa}^\spadesuit(t)  := \int_{\delta}^t\frac1{\phi_{\kappa, \Phi}^{-}(s)}ds, \ \ \ t \in\left(0,\, \sup\phi_{\kappa,\Phi}\right),
\end{equation}
where $\delta\in(0,\, \sup\phi_{\kappa,\Phi})$ is some fixed number\footnote{Any $\delta$ in $(0,\, \sup\phi_{\kappa,\Phi})$ is fine, so we will not include $\delta$ in the notation for $\Phi_{\kappa}^\spadesuit(t)$. The only place where we make a specific choice of $\delta$ is in the proof of Corollary~\ref{coro_abs_rate}. See also Remark~\ref{remark_delta}.}.
  We note that  $\Phi_{\kappa}^\spadesuit$ is well-defined thanks to Lemma~\ref{lemma_varphi} and Lemma~\ref{inv_lemma}~$(\ref{inv_lemma:1})$ and $(\ref{inv_lemma:4})$.
 
  The properties of $\Phi_{\kappa}^\spadesuit$ are as follows.

\begin{proposition}[The properties of $\Phi_{\kappa}^\spadesuit$]\label{prop_phi}
Let $\Phi_{\kappa}^\spadesuit$ be defined as in \eqref{def_phi} with $\phi_{\kappa, \Phi}$ defined as in \eqref{def_psi}. Then $\Phi_{\kappa}^\spadesuit$ is concave, monotone increasing and continuously differentiable on $(0, \, \sup\phi_{\kappa,\Phi})$. 
\end{proposition}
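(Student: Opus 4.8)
The plan is to prove the three stated properties of $\Phi_{\kappa}^\spadesuit$ --- continuous differentiability, monotone increase, and concavity --- directly from the definition \eqref{def_phi} as an integral, using the properties of the integrand $1/\phi_{\kappa,\Phi}^{-}$ established in Lemmas~\ref{lemma_varphi} and \ref{inv_lemma}.

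First I would establish that the integrand $s \mapsto 1/\phi_{\kappa,\Phi}^{-}(s)$ is well-defined, positive, continuous, and monotone nonincreasing on the open interval $(0,\, \sup\phi_{\kappa,\Phi})$. Well-definedness and positivity follow from Lemma~\ref{lemma_varphi} (which gives that $\phi_{\kappa,\Phi}$ is monotone increasing, $\phi_{\kappa,\Phi}(0)=0$, and right-continuous at $0$) together with Lemma~\ref{inv_lemma}~$(\ref{inv_lemma:1})$, which yields $\phi_{\kappa,\Phi}^{-}(s) > 0$ for all $s \in (0,\, \sup\phi_{\kappa,\Phi})$. Continuity of $\phi_{\kappa,\Phi}^{-}$ on that interval is Lemma~\ref{inv_lemma}~$(\ref{inv_lemma:4})$, so the reciprocal is continuous there as well. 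Finally, $\phi_{\kappa,\Phi}^{-}$ is monotone nondecreasing by Lemma~\ref{inv_lemma}, hence $1/\phi_{\kappa,\Phi}^{-}$ is monotone nonincreasing and strictly positive on $(0,\, \sup\phi_{\kappa,\Phi})$.

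With the integrand understood, the three conclusions follow in order. Since $1/\phi_{\kappa,\Phi}^{-}$ is continuous on $(0,\, \sup\phi_{\kappa,\Phi})$, the fundamental theorem of calculus shows that $\Phi_{\kappa}^\spadesuit$ is continuously differentiable on that interval with $(\Phi_{\kappa}^\spadesuit)'(t) = 1/\phi_{\kappa,\Phi}^{-}(t) > 0$; positivity of the derivative gives that $\Phi_{\kappa}^\spadesuit$ is monotone increasing. Concavity then follows because $(\Phi_{\kappa}^\spadesuit)'(t) = 1/\phi_{\kappa,\Phi}^{-}(t)$ is monotone nonincreasing in $t$ on the interval; a continuously differentiable function with nonincreasing derivative is concave.

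I do not expect any serious obstacle here --- the statement is essentially a bookkeeping consequence of the already-proven lemmas. The one point requiring slight care is making sure all invocations are on the correct domain: the endpoints $0$ and $\sup\phi_{\kappa,\Phi}$ must be excluded, because Lemma~\ref{inv_lemma}~$(\ref{inv_lemma:4})$ only guarantees continuity of $\phi_{\kappa,\Phi}^{-}$ on the open interval, and $\phi_{\kappa,\Phi}^{-}$ may vanish at $0$. One should also note that $\delta$ being interior to $(0,\,\sup\phi_{\kappa,\Phi})$ is exactly what makes the integral in \eqref{def_phi} a finite, well-defined Riemann integral over the compact subinterval between $\delta$ and $t$ (for $t$ on either side of $\delta$, with the usual sign convention), so that $\Phi_{\kappa}^\spadesuit$ is genuinely a real-valued function on the whole open interval. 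Everything else is routine.
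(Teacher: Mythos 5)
Your proposal is correct and follows essentially the same route as the paper: continuity and positivity of $\phi_{\kappa,\Phi}^{-}$ from Lemmas~\ref{lemma_varphi} and \ref{inv_lemma}, the fundamental theorem of calculus to get $(\Phi_{\kappa}^\spadesuit)'(t) = 1/\phi_{\kappa,\Phi}^{-}(t) > 0$, and concavity from the nonincreasing derivative. The only difference is cosmetic: the paper spells out via an integral estimate why a nonincreasing derivative implies concavity, whereas you cite it as a standard fact.
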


\begin{proof}
From Lemma~\ref{lemma_varphi} and Lemma~\ref{inv_lemma}~$(\ref{inv_lemma:1})$, $(\ref{inv_lemma:4})$,  we see that $\phi^{-}_{\kappa, \Phi} $ is continuous on $(0, \, \sup\phi_{\kappa,\Phi})$ and positive. Therefore, $\Phi_{\kappa}^\spadesuit$ is monotone increasing and continuously differentiable with $(\Phi_{\kappa}^\spadesuit)'(t) = \frac1{\phi_{\kappa, \Phi}^{-}(t)}$ for $t\in (0, \, \sup\phi_{\kappa,\Phi})$. This together with the monotonicity of $\phi^{-}_{\kappa, \Phi}$ from Lemma~\ref{inv_lemma} implies that $(\Phi_{\kappa}^\spadesuit)'$ is monotone nonincreasing on $(0, \, \sup\phi_{\kappa,\Phi})$, which shows that $\Phi_{\kappa}^\spadesuit$ is concave. For the sake of self-containment, we show this last assertion. For any fixed $x,y \in(0,\, \sup\phi_{\kappa,\Phi})$, we define $\theta(t) := \Phi_{\kappa}^\spadesuit(x + t(y - x))$. With that, we have $\Phi_{\kappa}^\spadesuit(y) - \Phi_{\kappa}^\spadesuit(x) = \theta(1) - \theta(0)$ and, by integration, we obtain 
\begin{align*}
&\ \  \  \  \Phi_{\kappa}^\spadesuit(y) - \Phi_{\kappa}^\spadesuit(x) \\
&= \int_0^1(\Phi_{\kappa}^\spadesuit)'\left(x + t(y - x)\right)(y - x)\, dt\\
& = \int_0^1 {\left[(\Phi_{\kappa}^\spadesuit)'\left(x + t(y - x)\right) - (\Phi_{\kappa}^\spadesuit)'(x)\right](y - x)}\, dt + \int_0^1(\Phi_{\kappa}^\spadesuit)'(x)(y - x)\, dt \\
&\le (\Phi_{\kappa}^\spadesuit)'(x)(y - x),
\end{align*}
where the last inequality follows from the monotonicity of $(\Phi_{\kappa}^\spadesuit)'$. Therefore, $\Phi_{\kappa}^\spadesuit$ is concave. This completes the proof.
\end{proof}

Next, we take a look at the behavior of $\Phi_{\kappa}^\spadesuit(t)$ as $t \to 0$. 

\begin{proposition}[Asymptotical properties of $\Phi_{\kappa}^\spadesuit$]\label{prop:asym}
Let $\Phi_{\kappa}^\spadesuit$ be defined as in \eqref{def_phi} with $\phi_{\kappa, \Phi}$ defined as in \eqref{def_psi}. Suppose that $C$ is not the whole space.
Let $x^0 \not \in C$ and suppose that 
$\kappa \geq \max\{\dist(0,C), \norm{x^0} \}$.
Then, $\Phi_{\kappa}^\spadesuit(t) \to -\infty$ as 
$t \to 0$. 
\end{proposition}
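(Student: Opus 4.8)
The plan is to show that the integral defining $\Phi_{\kappa}^\spadesuit$ diverges to $-\infty$ as its upper limit goes to $0$, which amounts to showing that $\int_0^\delta \frac{1}{\phi_{\kappa,\Phi}^-(s)}\,ds = +\infty$. Since $\Phi_{\kappa}^\spadesuit(t) = \int_\delta^t \frac{1}{\phi_{\kappa,\Phi}^-(s)}\,ds = -\int_t^\delta \frac{1}{\phi_{\kappa,\Phi}^-(s)}\,ds$, divergence of the latter integral is exactly what we need. The key observation is that $\phi_{\kappa,\Phi}^-(s)$ must be bounded above by a linear function of $s$ near $0$; once we have $\phi_{\kappa,\Phi}^-(s) \le c\, s$ for small $s$ and some constant $c > 0$, then $\frac{1}{\phi_{\kappa,\Phi}^-(s)} \ge \frac{1}{cs}$, whose integral near $0$ diverges, and we are done.

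To establish the bound $\phi_{\kappa,\Phi}^-(s) \le c\, s$ near $0$, I would argue via the error bound condition applied to a suitable point. The hypothesis $\kappa \ge \max\{\dist(0,C), \norm{x^0}\}$ together with $x^0 \notin C$ is there precisely so that along the segment $[P_C(x^0), x^0]$ — or more simply at points $x_\lambda = P_C(x^0) + \lambda(x^0 - P_C(x^0))$ for small $\lambda \in (0,1]$ — we have $\norm{x_\lambda} \le \kappa$ (by convexity of the norm, since both endpoints have norm at most $\kappa$: indeed $\norm{P_C(x^0)} = \dist(0,C) \le \kappa$ and $\norm{x^0} \le \kappa$), and $\dist(x_\lambda, C) = \lambda\,\dist(x^0, C) > 0$. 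Each such $x_\lambda$ is feasible for the optimization problem defining $\Phi$ at parameters $(d(x_\lambda), \kappa)$, so $\Phi(d(x_\lambda), \kappa) \ge \dist(x_\lambda, C) = \lambda\,\dist(x^0,C)$. This shows $\Phi(\cdot, \kappa)$ takes arbitrarily small positive values, hence $\sup \phi_{\kappa,\Phi} > 0$ so $\Phi_\kappa^\spadesuit$ is defined on a nonempty interval; more importantly it gives a lower bound on $\Phi(a,\kappa)$ for a sequence of small $a$, which translates into a linear upper bound on the generalized inverse.

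The cleanest route is probably this: set $\mu := \dist(x^0, C) > 0$. For $\lambda \in (0,1]$ let $a_\lambda := d(x_\lambda) = \max_i \dist(x_\lambda, C_i)$. Since $C_i \supseteq C$ we have $a_\lambda \le \dist(x_\lambda, C) = \lambda\mu$, and $a_\lambda \le \norm{x_\lambda - x_\lambda'}$-type estimates show $a_\lambda \to 0$ as $\lambda \to 0$; moreover $a_\lambda > 0$ for small $\lambda$ (otherwise $x_\lambda \in C$). From $\Phi(a_\lambda, \kappa) \ge \lambda\mu \ge \frac{\mu}{\mu}\,a_\lambda \cdot \frac{\lambda\mu}{a_\lambda} \ge a_\lambda$ — more carefully, $\Phi(a_\lambda,\kappa) \ge \lambda\mu$ and $a_\lambda \le \lambda\mu$, so $\Phi(a_\lambda,\kappa) \ge a_\lambda$. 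Thus $\phi_{\kappa,\Phi}(a_\lambda^2) = \Phi(a_\lambda,\kappa)^2 \ge a_\lambda^2$, i.e. at the points $t = a_\lambda^2$ we have $\phi_{\kappa,\Phi}(t) \ge t$. By Lemma~\ref{inv_lemma}~$(\ref{inv_lemma:2})$, for any $s$ with $s \le \phi_{\kappa,\Phi}(a_\lambda^2)$ we get $\phi_{\kappa,\Phi}^-(s) \le a_\lambda^2$; taking $s = a_\lambda^2$ yields $\phi_{\kappa,\Phi}^-(a_\lambda^2) \le a_\lambda^2 = s$, so $\phi_{\kappa,\Phi}^-(s) \le s$ for all $s$ in the sequence $\{a_\lambda^2\}$, which accumulates at $0$. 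Using monotonicity of $\phi_{\kappa,\Phi}^-$ to interpolate between consecutive values of this sequence, one gets $\phi_{\kappa,\Phi}^-(s) \le C s$ for all sufficiently small $s > 0$ (with $C$ the ratio between consecutive sequence points, which can be kept bounded by choosing $\lambda_n = 2^{-n}$ and noting $a_{\lambda}/a_{\lambda'}$ is controlled). Then $\int_0^\delta \frac{ds}{\phi_{\kappa,\Phi}^-(s)} \ge \int_0^{\delta'} \frac{ds}{Cs} = +\infty$, giving $\Phi_\kappa^\spadesuit(t) = -\int_t^\delta \frac{ds}{\phi_{\kappa,\Phi}^-(s)} \to -\infty$ as $t \to 0^+$.

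The main obstacle is the interpolation step: the inequality $\phi_{\kappa,\Phi}^-(s) \le s$ is only known along a discrete sequence $s_n = a_{\lambda_n}^2 \to 0$, and to conclude divergence of the integral I must control $\phi_{\kappa,\Phi}^-$ on the whole intervals $(s_{n+1}, s_n)$. Monotonicity gives $\phi_{\kappa,\Phi}^-(s) \le \phi_{\kappa,\Phi}^-(s_n) \le s_n$ for $s \in (s_{n+1}, s_n)$, so $\frac{1}{\phi_{\kappa,\Phi}^-(s)} \ge \frac{1}{s_n}$ there, and $\int_{s_{n+1}}^{s_n} \frac{ds}{\phi_{\kappa,\Phi}^-(s)} \ge \frac{s_n - s_{n+1}}{s_n} = 1 - \frac{s_{n+1}}{s_n}$. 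With $\lambda_n = 2^{-n}$ one checks $s_{n+1}/s_n = (a_{\lambda_{n+1}}/a_{\lambda_n})^2$ stays bounded away from $1$ — indeed $a_{\lambda_n} \le \lambda_n \mu$ and $a_{\lambda_n} \ge \lambda_n \mu'$ for some $\mu' > 0$ eventually, since $\dist(x_\lambda, C_i)$ is comparable to $\lambda$ near the endpoint where at least one constraint is active — so the series $\sum_n (1 - s_{n+1}/s_n)$ diverges, proving $\int_0^\delta \frac{ds}{\phi_{\kappa,\Phi}^-(s)} = +\infty$. An alternative that sidesteps the two-sided estimate on $a_\lambda$: simply note $\dist(x_\lambda, C) = \lambda\mu$ is linear and continuous in $\lambda$, so $\{a_\lambda : \lambda \in (0,1]\}$, being the image of the continuous-ish quantity... actually the simplest fix is to observe that $\phi_{\kappa,\Phi}^-$ is nondecreasing and to use that $\Phi(a,\kappa) \ge$ (a linear-in-$a$ lower bound) on a genuine interval of $a$-values, which follows because $\lambda \mapsto x_\lambda$ traces out a segment and $\Phi(\cdot,\kappa)$ dominates $\dist(\cdot, C)$ along it — I would argue that the function $a \mapsto \Phi(a,\kappa)$ satisfies $\Phi(a, \kappa) \ge \dist(x_{\lambda(a)}, C)$ whenever $a \ge d(x_{\lambda(a)})$, and pick out a sub-collection to get the clean estimate. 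I expect the segment-and-monotonicity bookkeeping to be the only real work; everything else is a direct appeal to Lemma~\ref{inv_lemma} and the definition \eqref{def_phi}.
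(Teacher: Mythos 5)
Your overall strategy---establish $\phi_{\kappa,\Phi}^{-}(s)\le s$ (up to a constant) near $0$, so that the integrand in \eqref{def_phi} dominates $1/s$ and the integral diverges logarithmically---is exactly the idea behind the paper's proof, but two steps of your execution do not hold up. First, your segment is anchored at the wrong point: you justify $\norm{x_\lambda}\le\kappa$ via the claim $\norm{P_C(x^0)}=\dist(0,C)$, which is false ($P_C(x^0)$ is the projection of $x^0$, not of $0$), and under the stated hypothesis $\kappa\ge\max\{\dist(0,C),\norm{x^0}\}$ one can have $\norm{P_C(x^0)}>\kappa$: take $m=1$, $C_1=C=\{(t,1):t\in\R\}$, $x^0=(a,0)$, $\kappa=a\ge 1$, so that $\norm{P_C(x^0)}=\sqrt{a^2+1}>\kappa$ and every interior point of the segment $[P_C(x^0),x^0]$ lies outside the ball of radius $\kappa$. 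At such points the monotonicity of $\Phi(a,\cdot)$ goes the wrong way, so you cannot conclude $\dist(x_\lambda,C)\le\Phi(d(x_\lambda),\kappa)$. The paper instead uses $P_C(0)$, whose norm equals $\dist(0,C)\le\kappa$, and works inside the ball $B_\kappa$. (A minor related point: the inequality $\Phi(d(x_\lambda),\kappa)\ge\dist(x_\lambda,C)$ should be quoted from Definition~\ref{GEB} together with monotonicity in the second argument, not from the optimization problem of Proposition~\ref{prop:uni}, since $\Phi$ here is an arbitrary strict consistent error bound function.)

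Second, and more seriously, you only obtain $\phi_{\kappa,\Phi}^{-}(s_n)\le s_n$ along the discrete sequence $s_n=a_{\lambda_n}^2$, and your interpolation across the gaps rests on the claim that $a_\lambda\ge\lambda\mu'$ eventually (``$\dist(x_\lambda,C_i)$ is comparable to $\lambda$''). That claim is unjustified and generally false---it amounts to assuming a Lipschitzian error bound along the segment, which is precisely what this paper does not assume; e.g.\ for $C_1=\{(x,y):y\ge x^2\}$, $C_2=\{(x,y):y\le 0\}$, $x^0=(1,0)$ one has $\dist(x_\lambda,C)=\lambda$ but $d(x_\lambda)=\Theta(\lambda^2)$. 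The clean repair, and the paper's actual argument, avoids the sequence altogether: since $d(x)\le\dist(x,C)\le\Phi(d(x),\kappa)$ for every $x\in B_\kappa$, one gets $t\le\phi_{\kappa,\Phi}(t)$ for every $t$ in the image of $d(\cdot)^2$ over $B_\kappa$; as $B_\kappa$ is connected and contains both $P_C(0)$ and $x^0$, the intermediate value theorem shows this image contains the whole interval $[0,\mu]$ with $\mu=d(x^0)^2>0$, so Lemma~\ref{inv_lemma}~(ii) gives $\phi_{\kappa,\Phi}^{-}(s)\le s$ on all of $(0,\tau)$ with $\tau=\min(\mu,\delta)$, whence $-\Phi_\kappa^\spadesuit(t)\ge\ln\tau-\ln t\to\infty$. (Your sequence route could in fact be salvaged by observing that each $\lambda\mapsto\dist(x_\lambda,C_i)$ is convex and vanishes at $\lambda=0$, so $a_{\lambda/2}\le a_\lambda/2$ and the ratios $s_{n+1}/s_n$ stay bounded away from $1$---but that is not the argument you gave, and your closing ``alternative'' only gestures at the interval argument without carrying it out.)
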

\begin{proof}
Let $B_{\kappa} \coloneqq \{x \in \mathcal{E} \mid \norm{x} \leq \kappa \}$ and let $d$ be the function such that
\[
d(x) = \max_{1\le i\le m}\dist(x, \, C_i).
\]
From \eqref{def_eb} and the fact that $C \subseteq C_i$ for all $i$, we have
\[
d(x) \leq \dist(x,\, C) \le \Phi\left(d(x), \, \kappa \right) \ \ \ \forall\ x \in B_{\kappa}.
\]
Then, from \eqref{def_psi} we have
\begin{equation}\label{eq:asym_key}
d(x)^2 \le \Phi(d(x),\, \kappa)^2 = \phi_{\kappa,\Phi}(d(x)^2) \ \ \ \forall\ x \in B_\kappa.
\end{equation}
Next, we examine the image of $d(\cdot)^2$ restricted 
to $B_\kappa$.
Since $\kappa \geq \max\{\dist(0,C), \norm{x^0} \}$, we have 
$x^0 \in B_{\kappa}$ and $P_C(0) \in B_\kappa$.
Let $\mu \coloneqq d(x^0)^2$. Since $d(\cdot)^2$ is a continuous function, by the intermediate value theorem, 
the image of $d(\cdot)^2$ restricted to $B_{\kappa}$ contains the interval $[0,\mu]$. We also have $\mu \neq 0$, because 
$x^0 \not \in C$.
In view of \eqref{eq:asym_key}, we have
\[
s \leq \phi_{\kappa,\Phi}(s), \quad \forall \ s \in [0,\mu].
\]
 Let $\tau = \min(\mu, \delta)$, where $\delta$ comes from 
the definition of $\Phi_{\kappa}^{\spadesuit}$ in \eqref{def_phi}. 
From Lemma~\ref{inv_lemma}~$(\ref{inv_lemma:2})$ we obtain
\begin{equation}\label{eq:diff}
\phi_{\kappa,\Phi}^{-}(s) \le s, \ \ s\in(0,\, \tau).
\end{equation}
Therefore, the following inequality holds for $t\in (0,\tau)$
\begin{equation*}
 - \Phi_{\kappa}^{\spadesuit}(t) = \int_{t}^{\delta}\frac1{\phi_{\kappa,\Phi}^{-}(s)}\, ds \ge  \int_{t}^{\tau}\frac1{\phi_{\kappa,\Phi}^{-}(s)}\, ds \ge \int_t^{\tau} \frac1{s}\, ds  = \ln \tau - \ln t.
\end{equation*}
This shows that $\Phi_{\kappa}^{\spadesuit}(t) \to -\infty$ as $t \to 0$ and completes the proof. 
\end{proof}

\subsection{Convergence analysis of sequences}\label{subsec_rate}

In this section, we make use of the inverse smoothing function discussed in Section~\ref{sub_sec_po} to analyze the convergence properties of sequences satisfying the Assumption~\ref{assp} below. Later, in Section~\ref{sec:proj_alg}, we show that several algorithms generate sequences of iterates satisfying Assumption~\ref{assp}.
\begin{assumption}\label{assp}
 Let $\{x^k\}\subseteq \mathcal{E}$ be a sequence such that the following conditions hold.

\begin{enumerate}[$(i)$]
\item \label{assp:h1} \emph{Fej\'er monotonicity condition}. For any fixed $c\in C$, it holds that
\begin{equation}
\|x^{k+1} - c\| \le \|x^k - c\| \ \ \ \forall\ k. \label{h1}
\end{equation}

\item \label{assp:h2}\emph{Sufficient decrease condition}. There exist some positive integer $\ell$ and nonnegative sequence $\{a_k\}$  with $\sum_{k=0}^{\infty}a_k = \infty$ such that 
\begin{equation}\label{h2}
 \dist^2(x^k, C)  \ge \dist^2(x^{k + \ell}, C) + a_k\max_{1\le i \le m}\dist^2(x^k, C_i) \ \ \ \forall\ k.
\end{equation}
\end{enumerate}
\end{assumption}

The  Fej\'er monotonicity assumption appears frequently in the study of convex feasibility problems, see \cite[Theorem~2.16]{BB96}. The sufficient decrease condition is inspired by similar conditions appearing in \cite{lt93,BNPS17}. However, we allow the possibility of having decrease after a fixed number of iterations instead of forcing decrease after every iteration.

\begin{proposition}\label{conv_thm}
Let  Assumption~\ref{assp} hold. Then $\{x^k\}$ converges to some point in $C$. 
\end{proposition}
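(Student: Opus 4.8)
The plan is to use standard Fej\'er monotonicity machinery together with the summability condition to extract a convergent subsequence whose limit lies in $C$, and then upgrade subsequential convergence to full convergence via the Fej\'er property.

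First I would fix an arbitrary $c \in C$ (which is nonempty by hypothesis). By Assumption~\ref{assp}~$(\ref{assp:h1})$, the sequence $\{\norm{x^k - c}\}$ is monotone nonincreasing, hence bounded and convergent; in particular $\{x^k\}$ is a bounded sequence. Applying the sufficient decrease inequality \eqref{h2} and summing over $k = 0, \dots, N$ along each of the $\ell$ residue classes modulo $\ell$, I would obtain that the telescoping sums of $\dist^2(x^k, C)$ are bounded (since $\dist^2(x^k,C) \le \norm{x^k - P_C(x^0)}^2$ is bounded), so that
\[
\sum_{k=0}^{\infty} a_k \max_{1 \le i \le m}\dist^2(x^k, C_i) < \infty.
\]
Combined with $\sum_{k=0}^\infty a_k = \infty$, this forces $\liminf_{k\to\infty} \max_{1\le i\le m}\dist(x^k, C_i) = 0$. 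Hence there is a subsequence $\{x^{k_j}\}$ with $\dist(x^{k_j}, C_i) \to 0$ for every $i$; by boundedness, pass to a further subsequence converging to some $\bar x$, and by continuity of $\dist(\cdot, C_i)$ we get $\dist(\bar x, C_i) = 0$, i.e. $\bar x \in C_i$ for all $i$, so $\bar x \in C$.

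Finally I would promote this to convergence of the whole sequence: since $\bar x \in C$, Assumption~\ref{assp}~$(\ref{assp:h1})$ applies with $c = \bar x$, so $\{\norm{x^k - \bar x}\}$ is monotone nonincreasing; as a subsequence tends to $0$, the limit of the whole nonincreasing sequence is $0$, hence $x^k \to \bar x \in C$.

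The main obstacle is the bookkeeping with the lag parameter $\ell$: because \eqref{h2} only guarantees decrease after $\ell$ steps, one cannot telescope $\dist^2(x^k, C)$ directly over all $k$, but must split into the $\ell$ arithmetic progressions $\{x^{r}, x^{r+\ell}, x^{r+2\ell}, \dots\}$ for $r = 0, \dots, \ell - 1$, telescope within each, and then recombine. Care is also needed to confirm that $\dist^2(x^k,C)$ is bounded uniformly in $k$, which follows because $\dist(x^k, C) \le \norm{x^k - c}\le \norm{x^0 - c}$ for any fixed $c \in C$; everything else is routine.
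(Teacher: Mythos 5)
Your argument is correct and follows essentially the same route as the paper: telescope the sufficient decrease inequality \eqref{h2} along arithmetic progressions modulo $\ell$, deduce $\liminf_k \max_i \dist(x^k,C_i)=0$ from $\sum_k a_k=\infty$, extract a bounded subsequence converging to a point of $C$ by closedness/continuity, and then use Fej\'er monotonicity with that limit point to get convergence of the whole sequence. The only cosmetic difference is that the paper telescopes along a single residue class $k_0$ chosen so that $\sum_i a_{k_0+i\ell}=\infty$, while you telescope all $\ell$ classes and work with the full series $\sum_k a_k \max_i \dist^2(x^k,C_i)<\infty$; both are valid.
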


\begin{proof}
Since $\sum_{k=0}^{\infty}a_k = \infty$ holds, there exists some integer $k_0\in[0, \ell - 1]$ such that 
\begin{equation}\label{eq:ak_inf}
\sum_{i=0}^{\infty}a_{k_0 + i\ell} = \infty.
\end{equation}
For any $N > 0$, summing both sides of \eqref{h2} for $k = k_0 + i\ell$ with $i = 0,\ldots, N-1$, we  obtain
\begin{equation}\label{finite_sum}
\begin{aligned}
 \dist^2(x^{k_0}, C) &\ge \dist^2(x^{k_0}, C) - \dist^2(x^{k_0 + N\ell}, C) \\
& = \sum_{i=0}^{N-1} \dist^2(x^{k_0 + i\ell}, C) - \dist^2(x^{k_0 + (i+1)\ell}, C) \\
&\ge \sum_{i=0}^{N -1} a_{k_0 + i\ell}\max_{1\le j\le m}\dist^2(x^{k_0 + i\ell}, C_j).
\end{aligned}
\end{equation}
Letting $N\rightarrow\infty$ in \eqref{finite_sum}, we then have $\sum_{i=0}^{\infty} a_{k_0 + i\ell}\max_{1\le j\le m}\dist^2(x^{k_0 + i\ell}, C_j) < \infty$. This, together with \eqref{eq:ak_inf}, implies that there exists a subsequence $\{x^{k_i}\}$ such that 
\begin{equation*}
\max_{1\le j\le m}\dist(x^{k_i}, C_j)\rightarrow 0\  \ \ {\rm when}\  i\rightarrow \infty.\footnote{The relevant fact is that if $\{u_k\},\{v_k\}$ are nonnegative sequences with $\sum u_k = \infty$ and $\sum u_kv_k < \infty$, then $\liminf v_k = 0$.}
\end{equation*}
Therefore, $\dist(x^{k_i}, C_j)\rightarrow 0$ for all $j=1,\ldots, m$. On the other hand, we know from the Fej\'er monotonicity of $\{x^k\}$ in \eqref{h1} that  $\{x^k\}$ is bounded. Thus, there exists a subsequence of $\{x^{k_i}\}$ which converges to some point $x^*\in\mathcal{E}$. Without loss of generality, we still let $\{x^{k_i}\}$ denote  this subsequence so that $\lim_{i\rightarrow\infty}\|x^{k_i} - x^*\| = 0$. Then, $\dist(x^{k_i}, C_j)\rightarrow 0$ and the closedness of the $C_j$ imply that $x^*\in\bigcap_{i=1}^m\, C_j = C$. Thus, using again the  Fej\'er monotonicity of $\{x^k\}$, we obtain 
\begin{equation*}
\|x^{k+1} - x^*\| \le \|x^k - x^*\| \ \  \ \forall\ k,
\end{equation*}
which  together with $\lim_{i\rightarrow\infty}\|x^{k_i} - x^*\| = 0$ gives $x^k\rightarrow x^*\in C$. 
\end{proof}

Now we establish our convergence rate under a strict consistent error bound as in Definition~\ref{GEB}.

\begin{theorem}\label{thm_conv}
Suppose that Assumption~\ref{assp} holds. Let $\Phi$ be a strict consistent error bound function for $C_1,\ldots, C_m$ as in Definition~\ref{GEB}. Let $\Phi_{\widehat{\kappa}}^\spadesuit$ be defined as in \eqref{def_phi} with $\widehat{\kappa}$ such that $\widehat{\kappa} \geq  \|x^0\| + 2\,\dist(0,\, C)$. Then, the convergence of $\{x^k\}$ is either finite or 
\begin{equation}\label{rate}
\dist (x^k,\, C) \le \sqrt{(\Phi_{\widehat{\kappa}}^\spadesuit)^{-1}\Big(\Phi_{\widehat{\kappa}}^\spadesuit(\dist^2(x^0,\, C) )- \sum_{i=0}^{b_k -1} a_{k_0 + i\ell}\Big)}\ \ \ \forall\ k\ge 2\ell
\end{equation}
holds for any integer $k_0\in[0,\, \ell-1]$ and $b_k:= \frac{k - \ell - (k \bmod \ell)}{\ell}$.
\end{theorem}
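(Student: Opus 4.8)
The plan is to reduce the convergence-rate estimate to a scalar recursion governed by the inverse smoothing function $\Phi_{\widehat{\kappa}}^\spadesuit$ and then to integrate that recursion. Write $e_k := \dist^2(x^k,\,C)$ and $d_k := \max_{1\le i\le m}\dist^2(x^k,\,C_i)$. First I would check that all iterates live inside the ball $B_{\widehat{\kappa}} = \{x : \norm{x}\le\widehat{\kappa}\}$: by Fej\'er monotonicity applied to the point $c = P_C(0)\in C$, we get $\norm{x^k}\le\norm{x^k - c} + \norm{c}\le\norm{x^0 - c} + \norm{c}\le\norm{x^0} + 2\norm{c} = \norm{x^0} + 2\dist(0,C)\le\widehat{\kappa}$ for all $k$. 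Consequently, applying the consistent error bound \eqref{def_eb} with second argument bounded by $\widehat{\kappa}$ and using monotonicity of $\Phi(a,\cdot)$ together with $\phi_{\widehat{\kappa},\Phi}(t) = \Phi(\sqrt t,\widehat\kappa)^2$, we obtain $e_k \le \phi_{\widehat{\kappa},\Phi}(d_k)$ for every $k$. By Lemma~\ref{inv_lemma}~$(\ref{inv_lemma:2})$ this yields $\phi_{\widehat{\kappa},\Phi}^{-}(e_k)\le d_k$, i.e. the ``true'' individual-distance error $d_k$ is at least the inverse-smoothed value of the intersection error.

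Next I would combine this with the sufficient decrease condition \eqref{h2}, which along the subsequence $k = k_0 + i\ell$ reads $e_{k_0+i\ell} \ge e_{k_0+(i+1)\ell} + a_{k_0+i\ell}\, d_{k_0+i\ell} \ge e_{k_0+(i+1)\ell} + a_{k_0+i\ell}\,\phi_{\widehat{\kappa},\Phi}^{-}(e_{k_0+i\ell})$. Writing $s_i := e_{k_0+i\ell}$, this is a decreasing scalar sequence satisfying $s_i - s_{i+1} \ge a_{k_0+i\ell}\,\phi_{\widehat{\kappa},\Phi}^{-}(s_i)$ with $\phi_{\widehat{\kappa},\Phi}^{-}$ positive, monotone, and continuous on $(0,\sup\phi_{\widehat{\kappa},\Phi})$ (Lemma~\ref{lemma_varphi}, Lemma~\ref{inv_lemma}). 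Now I would invoke the concavity and monotonicity of $\Phi_{\widehat{\kappa}}^\spadesuit$ (Proposition~\ref{prop_phi}) with $(\Phi_{\widehat{\kappa}}^\spadesuit)'(t) = 1/\phi_{\widehat{\kappa},\Phi}^{-}(t)$: concavity gives, for $s_{i+1}\le s_i$,
\[
\Phi_{\widehat{\kappa}}^\spadesuit(s_i) - \Phi_{\widehat{\kappa}}^\spadesuit(s_{i+1}) \le (\Phi_{\widehat{\kappa}}^\spadesuit)'(s_{i+1})\,(s_i - s_{i+1}),
\]
but the direction I actually want is the reverse inequality bounding $\Phi_{\widehat{\kappa}}^\spadesuit(s_{i+1})$ below; since $(\Phi_{\widehat{\kappa}}^\spadesuit)'$ is nonincreasing, $\Phi_{\widehat{\kappa}}^\spadesuit(s_i)-\Phi_{\widehat{\kappa}}^\spadesuit(s_{i+1}) = \int_{s_{i+1}}^{s_i}(\Phi_{\widehat{\kappa}}^\spadesuit)'(t)\,dt \ge (\Phi_{\widehat{\kappa}}^\spadesuit)'(s_i)(s_i - s_{i+1}) = (s_i-s_{i+1})/\phi_{\widehat{\kappa},\Phi}^{-}(s_i) \ge a_{k_0+i\ell}$. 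Telescoping from $i=0$ to $i = b_k - 1$ gives $\Phi_{\widehat{\kappa}}^\spadesuit(s_0) - \Phi_{\widehat{\kappa}}^\spadesuit(s_{b_k}) \ge \sum_{i=0}^{b_k-1} a_{k_0+i\ell}$, i.e. $\Phi_{\widehat{\kappa}}^\spadesuit(e_{k_0+b_k\ell}) \le \Phi_{\widehat{\kappa}}^\spadesuit(e_{k_0}) - \sum_{i=0}^{b_k-1}a_{k_0+i\ell}$. Using Fej\'er monotonicity once more, $e_{k_0}\le e_0 = \dist^2(x^0,C)$ (for $k_0\in[0,\ell-1]$, $x^{k_0}$ is no further from $C$ than $x^0$), so by monotonicity of $\Phi_{\widehat{\kappa}}^\spadesuit$ the right-hand side is at most $\Phi_{\widehat{\kappa}}^\spadesuit(\dist^2(x^0,C)) - \sum_{i=0}^{b_k-1}a_{k_0+i\ell}$; finally $k_0 + b_k\ell = k - (k\bmod\ell) \le k$, so another application of Fej\'er monotonicity gives $e_k \le e_{k_0+b_k\ell}$. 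Applying the increasing function $(\Phi_{\widehat{\kappa}}^\spadesuit)^{-1}$ and taking square roots yields \eqref{rate}.

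The subtle points I expect to dwell on are two. First, I must be careful that all arguments of $\Phi_{\widehat{\kappa}}^\spadesuit$ and of $(\Phi_{\widehat{\kappa}}^\spadesuit)^{-1}$ lie in the domain $(0,\sup\phi_{\widehat{\kappa},\Phi})$ where the function is defined, strictly increasing and differentiable: this is where the hypothesis $k\ge 2\ell$ enters (guaranteeing $b_k\ge 1$ so the sum is nonempty) and where one uses that $e_k$ is strictly positive on the infinite (non-finite-convergence) branch — if any $e_k = 0$ the convergence is finite and there is nothing to prove. I would also handle the boundary case where $e_0$ exceeds $\sup\phi_{\widehat{\kappa},\Phi}$ by noting from $e_k\le\phi_{\widehat{\kappa},\Phi}(d_k)$ and $d_k\le e_k$ that in fact $e_k$ stays in the relevant range for $k$ large enough, or by truncating. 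Second, the telescoping inequality direction requires $(\Phi_{\widehat{\kappa}}^\spadesuit)'$ nonincreasing and the sequence $\{s_i\}$ nonincreasing simultaneously — both hold, but I want to state the one-step estimate $\Phi_{\widehat{\kappa}}^\spadesuit(s_i) - \Phi_{\widehat{\kappa}}^\spadesuit(s_{i+1}) \ge a_{k_0+i\ell}$ cleanly via the integral representation rather than a tangent-line bound, to avoid sign errors. The genuine mathematical content — the error bound feeding $\phi_{\widehat{\kappa},\Phi}^{-}(e_k)\le d_k$ into the sufficient-decrease recursion — is straightforward once the domain bookkeeping is in place; the main obstacle is purely the careful domain/endpoint management around $\sup\phi_{\widehat{\kappa},\Phi}$ and the case split between finite and infinite convergence.
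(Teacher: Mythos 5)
Your proposal is correct and follows essentially the same route as the paper's proof: ball containment of the iterates via Fej\'er monotonicity, the consistent error bound fed through Lemma~\ref{inv_lemma}~$(\ref{inv_lemma:2})$ into the sufficient-decrease recursion, the concavity/tangent (equivalently, integral) estimate for $\Phi_{\widehat{\kappa}}^\spadesuit$, telescoping along $k_0+i\ell$, and finally Fej\'er monotonicity of the distances to pass to all $k\ge 2\ell$. The only blemish is the identity $k_0+b_k\ell = k-(k\bmod \ell)$, which should read $k_0+b_k\ell = k_0-\ell+k-(k\bmod\ell)\le k$; the inequality you actually need still holds, so this is a harmless slip rather than a gap.
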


\begin{proof}
First, the convergence of sequence $\{x^k\}$ follows from Proposition~\ref{conv_thm}. Note from \eqref{h1} that if there exists some $\widebar{k}$ such that $\dist(x^{\widebar{k}},\, C) = 0$, we have $x^k = x^{\widebar{k}}$ for all $k\ge \widebar{k}$. Consequently, in this case, $\{x^k\}$ converges finitely and we are done. 

Next, suppose that the convergence is not finite. Then, $\dist(x^k,\, C) > 0$ holds for all $k$. Notice that $\widehat{\kappa} > 0$; otherwise we have $\dist(x^0,\, C) = 0$. Let $c^*:=\argmin_{c\in C}\|c\|$. We then see from the  Fej\'er monotonicity of $\{x^k\}$ (\eqref{h1} in Assumption~\ref{assp}) that
\begin{equation*}
\|x^k - c^*\| \le \|x^0 - c^*\| \ \ \ \forall\ k,
\end{equation*}
which gives $\|x^k\| \le \|c^*\| + \|x^0 - c^*\| \le \widehat{\kappa}$ for all $k$.  This together with Definition~\ref{GEB}~(i), the definition of $\phi_{\kappa, \Phi}$ in \eqref{def_psi} and  Lemma~\ref{lemma_varphi} implies that for all $k$,
\begin{equation*}
\begin{aligned}
 \dist^2(x^k,\, C) &\le \Big(\Phi\big(\max_{1\le i\le m}\dist(x^k,\, C_i),\, \|x^k\|\big)\Big)^2 \\
& = \phi_{\|x^k\|, \Phi}(\max_{1\le i\le m}\dist^2(x^k,\, C_i)) \le \phi_{\widehat{\kappa}, \Phi}(\max_{1\le i\le m}\dist^2(x^k,\, C_i)).
\end{aligned}
\end{equation*}
This combined with Lemma~\ref{lemma_varphi} and Lemma~\ref{inv_lemma}~$(\ref{inv_lemma:2})$ implies that $\dist^2(x^k,\, C) \in (0,\, \sup\phi_{\widehat{\kappa},\Phi})$ and 
\begin{equation}\label{eb_k}
\phi_{\widehat{\kappa}, \Phi}^{-}\left(\dist^2(x^k,\, C)\right) \le \max_{1\le i\le m}\dist^2(x^k,\, C_i)\ \ \ \forall\ k.
\end{equation}
Now we combine  \eqref{def_phi}, \eqref{h2} and \eqref{eb_k}, use the concavity and differentiability of $\Phi_{\widehat{\kappa}}^\spadesuit$ from Proposition~\ref{prop_phi} and obtain
\begin{equation} \label{key}
\begin{split}
\Phi_{\widehat{\kappa}}^\spadesuit\left(\dist^2(x^{k},\, C)\right) - \Phi_{\widehat{\kappa}}^\spadesuit\left(\dist^2(x^{k+\ell},\, C)\right)&    \ge (\Phi_{\widehat{\kappa}}^\spadesuit)'(\dist^2(x^k,\, C))\left(\dist^2(x^k, \, C) - \dist^2(x^{k+\ell},\, C)\right) \\
& = \frac{1}{\phi_{\widehat{\kappa}, \Phi}^{-}(\dist^2(x^k,\, C))}\left(\dist^2(x^k,\, C) - \dist^2(x^{k+\ell}, \, C)\right) \\
& \ge \frac{1}{\max_{1\le i\le m}\dist^2(x^k,\, C_i)}\left(\dist^2(x^{k},\, C) - \dist^2(x^{k+\ell},\, C)\right)\\
&  \ge a_k.
\end{split}
\end{equation}
Moreover, fixing any integer $k_0\in[0,\, \ell-1]$, for any $N > 0$, summing both sides of \eqref{key} for $k = k_0 + i\ell$ with $i = 0,\ldots, N-1$, we further obtain
\begin{equation*}
\begin{split}
& \Phi_{\widehat{\kappa}}^\spadesuit\left(\dist^2(x^{k_0},\, C)\right) - \Phi_{\widehat{\kappa}}^\spadesuit\left(\dist^2(x^{k_0+N\ell},\, C)\right) \\
= &  \sum_{i=0}^{N-1}\Phi_{\widehat{\kappa}}^\spadesuit\left(\dist^2(x^{k_0+i\ell},\, C)\right) - \Phi_{\widehat{\kappa}}^\spadesuit\left(\dist^2(x^{k_0+(i+1)\ell},\, C)\right) \ge \sum_{i=0}^{N -1} a_{k_0 + i\ell}.
\end{split}
\end{equation*}
This together with the strict monotonicity and continuity on $(0, \, \sup\phi_{\widehat{\kappa},\Phi})$ of $\Phi_{\widehat{\kappa}}^\spadesuit$ (thus invertible), $\dist^2(x^k,\, C) \in (0,\, \sup\phi_{\widehat{\kappa},\Phi})$ and the Fej\'er monotonicity of $\{x^k\}$ further gives
\begin{equation}\label{sum_phi}
\dist(x^{k_0+N\ell},\, C)  \le \sqrt{(\Phi_{\widehat{\kappa}}^\spadesuit)^{-1}\Big(\Phi_{\widehat{\kappa}}^\spadesuit(\dist^2(x^0,\, C) )- \sum_{i=0}^{N -1} a_{k_0 + i\ell}\Big)}.
\end{equation}
Now, we note that for any positive integer $k$ we have $(k\bmod \ell) \geq 0 \geq k_0-\ell$ so that 
\begin{equation*}
k = (k\bmod \ell) + \frac{k - (k \bmod \ell) }{\ell}\cdot\ell \ge k_0  + \frac{k - \ell - (k\bmod \ell) }{\ell}\cdot\ell = k_0 +b_k\cdot\ell.
\end{equation*}
Using this, the Fej\'er monotonicity of $\{x^k\}$ and \eqref{sum_phi}, we see that for any $k\ge 2\ell$ (so that $b_k \geq 1$),
\begin{equation*}
\dist (x^k,\, C) \le \dist(x^{k_0 + b_k\cdot\ell},\, C) \le \sqrt{(\Phi_{\widehat{\kappa}}^\spadesuit)^{-1}\Big(\Phi_{\widehat{\kappa}}^\spadesuit(\dist^2(x^0, \, C) )- \sum_{i=0}^{b_k -1} a_{k_0 + i\ell}\Big)}.
\end{equation*}
This completes the proof.
\end{proof}
Next, we remark that the choice of $\delta$ in the definition of 
$\Phi_{\widehat{\kappa}}^\spadesuit$ has no 
impact in Theorem~\ref{thm_conv}.
\begin{remark}[No dependency on $\delta$ in \eqref{rate}]\label{remark_delta}
Let 
$g:(0,\, a) \to (0,\, \infty)$ be a positive continuous function, where $a>0$ or $a = \infty$.
Let $\delta \in(0,\, a)$ and define  
$f_{\delta}(s) \coloneqq \int _{\delta}^s g(t) dt$, for $s \in(0,\, a)$. With that, 
$f_{\delta}$ is monotone increasing and continuous, thus invertible. 

Let $L = f_{\delta}^{-1}(f_{\delta}(s_0) - c)$ be well-defined with some $s_0 > 0, c\ge 0$. 
We have
\[
-c = f_{\delta}(L) - f_{\delta}(s_0) = \int _{s_0}^{L} g(t) dt  = f_{s_0}(L),
\]
so that $L = f_{s_0}^{-1}(-c)$. This shows that $L$ is constant as a function of $\delta$ and only depends on $c,g$ and $s_0$. Therefore the term inside the square root in  $\eqref{rate}$ only depends on $\Phi$, $\widehat{\kappa}$, $\dist^2(x^0,\, C)$ and  $\sum_{i=0}^{b_k -1} a_{k_0 + i\ell}$ but not on $\delta$.
	
\end{remark}

Before we conclude this subsection, we show that sublinear rates 
can be derived from Theorem~\ref{thm_conv} when $\Phi$ is as in Theorem~\ref{hold_geb}.
\begin{corollary}\label{coro_abs_rate}
Suppose that {\rm Assumption~\ref{assp}} holds with $\inf _{k} a_k > 0$. Suppose that a  H\"olderian error bound defined as in {\rm Definition~\ref{def_hold}} holds. Then the sequence $\{x^k\}$ converges to some point in $C$ at least with a sublinear rate $O(k^{-p})$ for some $p > 0$. 
In particular, if the H\"olderian error bound is uniform with exponent $\gamma\in(0,1]$, then there exist some $M > 0$ and $\theta\in(0,1)$ such that  for any $k\ge 2\ell$,
\begin{equation}\label{rate_abstract}
\dist(x^k,\, C) \le \begin{cases}
M\, k^{-\frac1{2(\gamma^{-1} - 1)}} & {\rm if}\  \gamma\in(0,1),\\
M\, \theta^k & {\rm if} \ \gamma = 1.
\end{cases}
\end{equation}

\end{corollary}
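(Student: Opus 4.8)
The plan is to apply Theorem~\ref{thm_conv} with the explicit strict consistent error bound function $\Phi$ provided by Theorem~\ref{hold_geb}, and then compute the relevant integral $\Phi_{\widehat\kappa}^\spadesuit$ explicitly for the H\"olderian case. First I would invoke Theorem~\ref{hold_geb}: since a H\"olderian error bound holds, there are monotone functions $\gamma(\cdot)$ and $\rho(\cdot)$ such that $\Phi(a,b) = \rho(b)\max(a^{\gamma(b)},a)$ is a strict consistent error bound function. Fix $\widehat\kappa \geq \|x^0\| + 2\dist(0,C)$ as required by Theorem~\ref{thm_conv}, write $\gamma_0 := \gamma(\widehat\kappa) \in (0,1]$ and $\rho_0 := \rho(\widehat\kappa) > 0$. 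Then from \eqref{def_psi}, $\phi_{\widehat\kappa,\Phi}(t) = \rho_0^2 \max(t^{\gamma_0}, t)$, which for small $t$ behaves like $\rho_0^2 t^{\gamma_0}$. Since $\dist^2(x^k,C) \to 0$, only the behavior of $\phi_{\widehat\kappa,\Phi}$ near $0$ matters, so I would fix the parameter $\delta$ in \eqref{def_phi} to be small enough that the relevant range lies in the region where $\phi_{\widehat\kappa,\Phi}(t) = \rho_0^2 t^{\gamma_0}$ (this is legitimate by Remark~\ref{remark_delta}, which says the choice of $\delta$ does not affect \eqref{rate}). On that range, $\phi_{\widehat\kappa,\Phi}^{-}(s) = (s/\rho_0^2)^{1/\gamma_0}$, so $(\Phi_{\widehat\kappa}^\spadesuit)'(t) = \rho_0^{2/\gamma_0} t^{-1/\gamma_0}$ and hence, for $\gamma_0 \in (0,1)$, $\Phi_{\widehat\kappa}^\spadesuit(t) = \frac{\rho_0^{2/\gamma_0}}{1 - 1/\gamma_0}\, t^{1 - 1/\gamma_0} + \text{const}$, while for $\gamma_0 = 1$, $\Phi_{\widehat\kappa}^\spadesuit(t) = \rho_0^2 \ln t + \text{const}$.

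Next I would feed these expressions into \eqref{rate}. Using $\inf_k a_k =: \underline a > 0$, the sum $\sum_{i=0}^{b_k-1} a_{k_0 + i\ell} \geq \underline a\, b_k$, and $b_k = \frac{k - \ell - (k\bmod\ell)}{\ell}$ is $\Theta(k)$; so $\sum_{i=0}^{b_k-1} a_{k_0+i\ell} \geq c_1 k$ for some $c_1 > 0$ and all large $k$. For the uniform case with exponent $\gamma$, we have $\gamma_0 = \gamma$ (no dependence on $\widehat\kappa$). When $\gamma \in (0,1)$: inverting $\Phi_{\widehat\kappa}^\spadesuit$, which is of the form $t \mapsto A t^{1-1/\gamma} + B$ with $A < 0$ (since $1 - 1/\gamma < 0$), a linear decrease in the argument of $(\Phi_{\widehat\kappa}^\spadesuit)^{-1}$ translates into $\dist^2(x^k,C) = O\big((\text{const} + c_1 k)^{1/(1/\gamma - 1)}\big) = O(k^{-1/(1/\gamma-1)})$, hence $\dist(x^k,C) = O(k^{-\frac{1}{2(\gamma^{-1}-1)}})$, giving the first branch of \eqref{rate_abstract}. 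When $\gamma = 1$: $\Phi_{\widehat\kappa}^\spadesuit(t) = \rho_0^2\ln t + B$, so $(\Phi_{\widehat\kappa}^\spadesuit)^{-1}(u) = \exp((u-B)/\rho_0^2)$, and subtracting $c_1 k$ from the argument multiplies $\dist^2(x^0,C)$ by $\exp(-c_1 k/\rho_0^2)$; taking square roots yields $\dist(x^k,C) \leq M\theta^k$ with $\theta = \exp(-c_1/(2\rho_0^2)) \in (0,1)$, the second branch.

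For the non-uniform case, the subtlety is that $\gamma = \gamma(b)$ varies with $b$, but since the sequence $\{x^k\}$ is bounded (by Fej\'er monotonicity, as in Proposition~\ref{conv_thm}), all iterates lie in a fixed ball $U_r$, and on that ball the H\"olderian bound holds with a single admissible exponent, say $p' > 0$; equivalently $\gamma(b) \geq \gamma(\widehat\kappa) > 0$ is bounded below along the relevant range. So one reduces to the uniform case with that fixed exponent $\gamma_0 = \gamma(\widehat\kappa)$, and if $\gamma_0 < 1$ one gets rate $O(k^{-p})$ with $p = \frac{1}{2(\gamma_0^{-1}-1)} > 0$, while if $\gamma_0 = 1$ one gets linear convergence, which is certainly $O(k^{-p})$ for every $p > 0$. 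The main obstacle I anticipate is purely bookkeeping: carefully justifying that only the behavior of $\phi_{\widehat\kappa,\Phi}$ near zero is relevant (so that the $\max(a^{\gamma},a)$ can be replaced by $a^\gamma$), and tracking the additive constants through the inversion of $\Phi_{\widehat\kappa}^\spadesuit$ so that the final bound genuinely absorbs them into the multiplicative constant $M$; the choice of $\delta$ via Remark~\ref{remark_delta} and the boundedness of $\{x^k\}$ are the two facts that make this clean.
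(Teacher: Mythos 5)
Your proposal is correct and follows essentially the same route as the paper's proof: obtain $\Phi(a,b)=\rho(b)\max(a^{\gamma(b)},a)$ from Theorem~\ref{hold_geb}, plug it into Theorem~\ref{thm_conv} with $\widehat\kappa=\|x^0\|+2\,\dist(0,C)$ and $a_k\ge\tau>0$, compute $\Phi_{\widehat\kappa}^\spadesuit$ and its inverse explicitly, and split into the cases $\gamma(\widehat\kappa)\in(0,1)$ and $\gamma(\widehat\kappa)=1$. The only cosmetic difference is that you use a small $\delta$ to work only with the near-zero branch of $\phi_{\widehat\kappa,\Phi}$ and absorb the rest into constants (enlarging $M$ for the finitely many intermediate $k$), whereas the paper writes out the two-piece formulas for $\Phi_{\widehat\kappa}^\spadesuit$ and $(\Phi_{\widehat\kappa}^\spadesuit)^{-1}$ and handles this in a footnote.
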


\begin{proof}
The convergence of $\{x^k\}$ follows  from {\rm Assumption~\ref{assp}} and Proposition~\ref{conv_thm}. If the sequence $\{x^k\}$ has finite convergence, one can see that \eqref{rate_abstract} holds for some $M > 0$ and $\theta\in(0,1)$. 
In the following, we consider the case where $\{x^k\}$ does not have finite convergence.

First, assume that a non-uniform H\"olderian error bound holds. From Theorem~\ref{hold_geb}~(i)  the following function is a strict consistent error bound function for the sets $C_1,\ldots, C_m$:
\begin{equation*}
\Phi(a,\, b) \coloneqq \rho(b)\max\{a^{\gamma(b)},\,a\},
\end{equation*}
where $\rho(\cdot)$ is monotone nondecreasing and $\gamma(\cdot)$ is monotone nonincreasing. Let $\Phi_{\widehat{\kappa}}^\spadesuit$ be defined as in \eqref{def_phi} with $\widehat{\kappa}:=  \|x^0\| + 2\,\dist(0,\, C)$. Since $\inf _{k} a_k > 0$, there exists $\tau > 0$ such that $a_k \geq \tau$ for every $k$. Then, from  Theorem~\ref{thm_conv} (setting $k_0 = 0$) and the strict monotonicity of $\Phi_{\widehat{\kappa}}^\spadesuit$ we get that for any $k\ge 2\ell$,
\begin{equation}\label{phi_b}
\begin{aligned}
 \dist (x^k,\, C) &\le \sqrt{(\Phi_{\widehat{\kappa}}^\spadesuit)^{-1}\Big(\Phi_{\widehat{\kappa}}^\spadesuit(\dist^2(x^0,\, C) )- \sum_{i=0}^{b_k - 1} a_{i\ell}\Big)} \\
& \le  \sqrt{(\Phi_{\widehat{\kappa}}^\spadesuit)^{-1}\Big(\Phi_{\widehat{\kappa}}^\spadesuit(\dist^2(x^0,\, C) )- (k/\ell-2)\tau\Big)}.
\end{aligned} 
\end{equation}
Now we calculate the formula of $\Phi_{\widehat{\kappa}}^\spadesuit$. First, we see from  \eqref{def_psi}  that
\begin{equation}\label{phi_comp}
\phi_{\widehat{\kappa}, \Phi}(t) = \left(\Phi(\sqrt{t},\, \widehat{\kappa})\right)^2 = \rho(\widehat{\kappa})^2\max\{t^{\gamma(\widehat{\kappa})},\, t\}.
\end{equation}
Next, we consider two cases depending on the value of $\gamma(\widehat{\kappa})$.

\textbf{Case 1.} $\gamma(\widehat{\kappa})\in(0,1)$. 
In this case, the computation of $\phi_{\widehat{\kappa}, \Phi}^{-}$ is as follows.
\begin{equation*}
\phi_{\widehat{\kappa}, \Phi}^{-}(s) = \begin{cases}
\frac{s}{\rho(\widehat{\kappa})^2} & \text{ if } s \geq \rho(\widehat{\kappa})^2, \\
\frac{1}{\rho(\widehat{\kappa})^{2/\gamma(\widehat{\kappa})}} {s^{\frac{1}{\gamma(\hat\kappa)}}} & \text{ if } 0< s < \rho(\widehat{\kappa})^2.
\end{cases}
\end{equation*}
Next, we compute $\Phi_{\widehat{\kappa}}^\spadesuit$ and we let $\delta \coloneqq \rho(\widehat{\kappa})^2$ in \eqref{def_phi} ($0 < \delta < \sup\phi_{\widehat{\kappa},\Phi} = \infty$), so that 
\begin{equation}\label{eq:phi_spade}
\Phi_{\widehat{\kappa}}^\spadesuit(t) =
\begin{cases}
\frac{\gamma(\widehat{\kappa})}{1 - \gamma(\widehat{\kappa}) } \rho(\widehat{\kappa})^{\frac2{\gamma(\widehat{\kappa})}}\bigg( (\rho(\widehat{\kappa})^2)^{1- \gamma(\widehat{\kappa})^{-1}} - t^{1- \gamma(\widehat{\kappa})^{-1}}\bigg) & \text{ if } 0 < t < \delta,\\
\rho(\widehat{\kappa})^2 (\ln t - 2\ln\rho(\widehat{\kappa})) & \text{ if } t \geq \delta.
\end{cases} 
\end{equation}
Letting  $c_0 \coloneqq \frac{\gamma(\widehat{\kappa})}{1 - \gamma(\widehat{\kappa}) } \rho(\widehat{\kappa})^{\frac2{\gamma(\widehat{\kappa})}}$, we have
\begin{equation}\label{phi_inv}
(\Phi_{\widehat{\kappa}}^\spadesuit)^{-1}(s) = \begin{cases}
\left((\rho(\widehat{\kappa})^2)^{1- \gamma(\widehat{\kappa})^{-1}}- \frac{s}{c_0} \right)^{\frac1{1 - \gamma(\widehat{\kappa})^{-1}}} & \text{ if } s <  0, \\
\rho(\widehat{\kappa})^2 e^{s/{\rho(\widehat{\kappa})^2}} & \text{ if } s \geq  0. \\
\end{cases}
\end{equation}
For simplicity, let $c_1 \coloneqq \Phi_{\widehat{\kappa}}^\spadesuit(\dist^2(x^0,\, C) )+ 2\tau$. From 
\eqref{phi_b}, we have
\begin{equation*}
\dist (x^k,\, C) \le   \sqrt{(\Phi_{\widehat{\kappa}}^\spadesuit)^{-1}\Big(c_1 - \frac{k\tau}{\ell} \Big)}.
\end{equation*}
Therefore, if  $k > \frac{\ell c_1}{\tau}$ and $k \geq 2\ell$, we have
\begin{equation}\label{eq:hold_rate}
\begin{aligned}
 \dist (x^k,\, C) &\le
 \left((\rho(\widehat{\kappa})^2)^{1- \gamma(\widehat{\kappa})^{-1}}- \frac{c_1}{c_0} + \frac{k\tau}{\ell c_0}\right)^{-\frac1{2(\gamma(\widehat{\kappa})^{-1} - 1)}}\\ &\le M\, k^{-\frac1{2(\gamma(\widehat{\kappa})^{-1} - 1)}},
\end{aligned}
\end{equation}
holds for some $M > 0$. This proves the sublinear convergence rate of $\{x^k\}$\footnote{We note that for the $x_k$ such that $k \geq 2\ell$ but 
$k \le \frac{\ell c_1}{\tau}$, the rate for those iterates is governed by 
the second expression in \eqref{phi_inv}, so overall, we have a sublinear convergence rate 
for all $k \geq 2\ell$.}.

\textbf{Case 2.} $\gamma(\widehat{\kappa}) = 1$. 
For this case, it will be more convenient to use $\delta \coloneqq 1$ in 
\eqref{def_phi}. Then, from \eqref{def_phi} and \eqref{phi_comp} we have
\begin{equation}
\Phi_{\widehat{\kappa}}^\spadesuit(t) = \int_{1}^t\frac1{\phi_{\widehat{\kappa}, \Phi}^{-}(s)}ds = \rho(\widehat{\kappa})^2\int_{1}^t s^{-1} ds = \rho(\widehat{\kappa})^2 \ln t .\label{eq:phi_spade_lin}
\end{equation}
Let $c_2:= \rho(\widehat{\kappa})^2$. Then, we have $(\Phi_{\widehat{\kappa}}^\spadesuit)^{-1}(t) = e^{t/c_2}$ and 
\begin{equation*}
\dist(x^k,\, C) \le \sqrt{(\Phi_{\widehat{\kappa}}^\spadesuit)^{-1}\Big(\Phi_{\widehat{\kappa}}^\spadesuit(\dist^2(x^0,\, C) )- (k/\ell-2)\tau\Big)}  = e^{\tau/c_2}\dist(x_0,\, C)\cdot e^{-\frac{\tau}{2\ell c_2}k},
\end{equation*}
which proves the linear convergence rate of $\{x^k\}$. 
This concludes the proof for the non-uniform case.

If  the H\"olderian error bound is uniform with exponent $\gamma\in(0,1]$, 
the function $\Phi$ is as in \eqref{phi_hold_u}, so the $\max$ term in \eqref{phi_comp} becomes $t^\gamma$ and there is no need 
to divide the computation of $\Phi_{\widehat{\kappa}}^\spadesuit$ and $(\Phi_{\widehat{\kappa}}^\spadesuit)^{-1}$ in two cases.
In particular,  \eqref{eq:phi_spade} and \eqref{phi_inv} become simpler since the second case in each expression is discarded. Then, \eqref{rate_abstract} follows from a similar line of arguments\footnote{The only subtlety is that  in the proof of Case $1$ in the uniform case, \eqref{eq:hold_rate} holds for all $k \geq 2\ell$ and there is no need to impose $k > \ell c_1/\tau$. } as above, replacing $\gamma(\widehat{\kappa})$ by $\gamma$.
This completes the proof.
\end{proof}

\subsection{Projection algorithms}\label{sec:proj_alg}
 In the following,  we consider  an algorithm scheme contained in the broader framework given in Section~3 of \cite{BB96}. Specifically, given $x^0\in\mathcal{E}$, \emph{relaxation parameter} $\{\alpha_i^k\}\subseteq[0,2)$ and \emph{weight} $\{\lambda_i^k\}$ satisfying $\sum_{i=1}^m\lambda_i^k = 1$ with $\lambda_i^k\ge 0$ for all $k$, we consider the following algorithm scheme:
\begin{equation}\label{proj_alg}
x^{k + 1} = \sum_{i = 1}^m\lambda_i^k\Big[(1 - \alpha_i^k) I + \alpha_i^k P_{C_i}\Big](x^k),
\end{equation}
where $I$ denotes the identity operator and $P_{C_i}$ is the orthogonal projection operator onto $C_i$.

\begin{example}\label{remark_proj}
Here are a few examples of algorithms covered under the algorithm scheme \eqref{proj_alg}. 
\begin{enumerate}[$(a)$]
	\item \emph{Mean projection algorithm} (MPA)(\cite{BB96,BT03,gp72}):  $\alpha_i^k = 1$ for all $i$ and $k$, and the weights $\lambda_i^k$ ($i = 1,\ldots,m$) are positive constants for all $k$. When $\lambda_i^k = \nu_i > 0$ for every $i$ and $k$ with $\sum_{i=1}^m\nu_i = 1$, the iterations are of the format 
       \begin{equation*}
	x^{k+1} = \sum _{i=1}^m  \nu_i P_{C_i}(x^k).
	\end{equation*}

	\item \emph{Projections onto convex sets algorithm} (POCSA)(\cite{censor81,cpl96,HLL78,YW82}):  Let $t(k): = (k \bmod  m) + 1$. For every $k$, set $\lambda_i^k = 1$ and $\epsilon \le \alpha_i^k\le 2-\epsilon$  with $\epsilon\in(0,\,1)$  when $i = t(k)$, and set $\lambda_i^k = 0$ when $i \neq t(k)$ ($\alpha_i^k$ can be arbitrarily defined in this case).
	The iterations are of the format
	  \begin{equation*}
	x^{k+1} = \left(1 - \alpha_{t(k)}^k\right)x^k + \alpha_{t(k)}^k P_{C_{t(k)}}(x^k).
     \end{equation*}
Especially, when $\alpha_{t(k)}^k \equiv 1$ for all $k$, it reduces to $x^{k+1} = P_{C_{t(k)}}(x^k)$, which is the well-known \emph{Cyclic projection algorithm} (CPA),  see \cite{as54,BB96,BT03,BLY14}.
	
	\item \emph{Motzkin’s method} (MM)(\cite{as54,LHN17,ms54}): Fix any $i(k)\in\Argmax_{1\le i\le m}\dist(x^k, C_i)$. For every $k$, let $\lambda_{i}^k= 1$  and $\alpha_i^k = \lambda$ with $\lambda\in(0,\,2)$ for $i = i(k)$, and $\lambda_i^k = 0$ for $i\neq i(k)$ ($\alpha_i^k$ can be arbitrarily defined in this case).
	The iterations are of the format 
	\begin{equation*}
	x^{k+1} = (1 - \lambda)x^k + \lambda P_{C_{i(k)}}(x^k).
	\end{equation*}
Especially, when $\lambda = 1$, it reduces to $x^{k+1} =  P_{C_{i(k)}}(x^k)$, which is known as \emph{Maximum distance projection algorithm} (MDPA), see \cite{BB96,BT03}.

	\item The following \emph{adaptive weighted projection algorithm} (AWPA): $\alpha_i^k = 1$ for all $i$ and $k$, and the weights $\lambda_i^k$ ($i = 1,\ldots,m$) are adaptively chosen. Let $f:[0,+\infty) \to [0,+\infty)$ be a monotone increasing nonnegative function such that $f(0) = 0$. Define $d_i^k := \dist(x^k,\, C_i)$ and let $\lambda_i^k = \frac{f(d_i^k)}{f(d_1^k)+\cdots+f(d_m^k)}$.
	The iterations are of the format 
      \begin{equation*}
	x^{k+1} = 
	\sum_{i=1}^m\frac{f(d_i^k)}{f(d_1^k)+\cdots+f(d_m^k)}P_{C_i}(x^k),
	\end{equation*}
	if at least one of the $d_i^k$ is nonzero.
	This is related  to a generalization of Ansorge's method discussed in Example 6.32 in \cite{BB96}. A particular case is the following iteration
	\begin{equation*}
	x^{k+1} = \sum_{i=1}^m\frac{d_i^k}{d_1^k+\cdots+d_m^k}P_{C_i}(x^k). 	
	\end{equation*}

	For analysis purposes and in order for the iteration to be well-defined for all $k$  we consider that if $d_i^k = 0$ for all $i$ (i.e., $x^k \in C$), then 
	AWPA falls back to the following MPA iteration: $x^{k+1} = 
	\sum_{i=1}^m\frac{1}{m}P_{C_i}(x^k)$.

\end{enumerate}
\end{example}

Now we show that the sequence generated by scheme \eqref{proj_alg} satisfies Assumption~\ref{assp} under some conditions on the parameters.  For that, we introduce the following notation:
\begin{equation}\label{def_notation}
\begin{aligned}
M(k) &\coloneqq \left\{ i \mid  i\in \Argmax_{1\le i\le m}\dist(x^k,\, C_i)\right\}, \\  I_{\sigma}(k) &\coloneqq \left\{ i \mid \lambda_i^k \ge \sigma\right\}.
\end{aligned}
\end{equation}

\begin{lemma}[Checking Assumption~\ref{assp}]\label{lemma_1} 
Let the sequence $\{x^k\}$ be generated by \eqref{proj_alg}. Then  $\{x^k\}$ is Fej\'er monotone with respect to~$C$, i.e., {\rm Assumption}~\ref{assp}~$(\ref{assp:h1})$ holds. Let 
\begin{equation}\label{mu_def}
\mu_i^k: = \alpha_i^k\lambda_i^k(2 - \sum_{j=1}^m\alpha_j^k\lambda_j^k), \ \ \  i = 1, \ldots, m.
\end{equation} 
Then it holds for all $k$ that
\begin{equation}\label{dist_dec}
 \dist^2(x^k,\, C)  \ge \dist^2(x^{k + 1},\, C) + \sum_{i = 1}^m\mu_i^k\,\dist^2(x^k,\, C_i).
 \end{equation}
 Moreover, the following statements hold.
\begin{itemize}

\item[(i)]  If there exists $m(k)\in M(k)$ such that $\sum_{k=0}^{\infty}\mu_{m(k)}^k = \infty$,  then {\rm Assumption~\ref{assp}~$(\ref{assp:h2})$} holds with $\ell = 1$ and $a_k = \mu_{m(k)}^k$ in  inequality \eqref{h2}.

\item[(ii)] If $\alpha_i^k\in[\alpha_1,\,\alpha_2]$ holds for all $i$ and $k$ with some $0 <\alpha_1 \le \alpha_2 < 2$, and there exist some $\sigma \in(0, 1]$ and integer $s\ge 1$ such that for all k,
\begin{equation}\label{cycle}
I_{\sigma}(k) \cup I_{\sigma}(k+1) \cup \cdots \cup I_{\sigma}(k+s - 1) = \{1, 2, \ldots, m\},
\end{equation}
then {\rm Assumption~\ref{assp}~$(\ref{assp:h2})$} holds with $\ell = s$ and $a_k = \min\left(\frac{\sigma\alpha_1(2 - \alpha_2)}{s},\,\frac{\alpha_1(2 - \alpha_2)}{(\alpha_2)^2 s}\right)$ in  inequality \eqref{h2}.
  \end{itemize}
\end{lemma}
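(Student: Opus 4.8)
The plan is to prove the three assertions in order, starting from the basic inequality that the projection scheme satisfies a quasi-Fej\'er / decrease property. First I would recall the standard fact that each operator $T_i^k := (1-\alpha_i^k)I + \alpha_i^k P_{C_i}$ is a relaxed projection, so for any $c \in C \subseteq C_i$ we have the inequality $\|T_i^k(x) - c\|^2 \le \|x - c\|^2 - \alpha_i^k(2 - \alpha_i^k)\dist^2(x, C_i)$; this follows from the firm nonexpansiveness of $P_{C_i}$ and the identity for relaxed projections. Averaging this over $i$ with weights $\lambda_i^k$ and using convexity of $\|\cdot - c\|^2$ gives $\|x^{k+1} - c\|^2 \le \|x^k - c\|^2 - \sum_i \lambda_i^k \alpha_i^k(2 - \alpha_i^k)\dist^2(x^k, C_i)$, which immediately yields Fej\'er monotonicity (Assumption~\ref{assp}~$(\ref{assp:h1})$). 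To upgrade the decrease coefficient from $\lambda_i^k\alpha_i^k(2-\alpha_i^k)$ to $\mu_i^k = \alpha_i^k\lambda_i^k(2 - \sum_j \alpha_j^k\lambda_j^k)$ as in \eqref{dist_dec}, I would instead expand $\|x^{k+1} - c\|^2$ directly using $x^{k+1} - c = \sum_i \lambda_i^k(T_i^k(x^k) - c)$ and $T_i^k(x^k) - x^k = \alpha_i^k(P_{C_i}(x^k) - x^k)$, and use the elementary inequality $\inProd{P_{C_i}(x^k) - x^k}{c - x^k} \le -\dist^2(x^k, C_i)$ together with the bound $\|x^{k+1} - x^k\|^2 \le (\sum_j \alpha_j^k\lambda_j^k)\sum_i \lambda_i^k(\alpha_i^k)\dist^2(x^k,C_i)$ (or a cleaner Cauchy-Schwarz / Jensen estimate); taking $c = P_C(x^k)$ so that $\dist^2(x^{k+1}, C) \le \|x^{k+1} - P_C(x^k)\|^2$ gives \eqref{dist_dec}. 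This expansion, getting the sharper $\mu_i^k$ coefficient, is the main technical obstacle and where the computation must be done carefully.

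Next I would establish (i). Given $m(k) \in M(k)$, we have $\dist^2(x^k, C_{m(k)}) = \max_{1\le i\le m}\dist^2(x^k, C_i)$ by definition of $M(k)$, and since all $\mu_i^k \ge 0$ (because $\sum_j \alpha_j^k\lambda_j^k \le \sum_j \lambda_j^k \cdot 2 \cdot \tfrac12$... more precisely $\sum_j \alpha_j^k\lambda_j^k < 2$ as each $\alpha_j^k < 2$ and $\sum_j\lambda_j^k = 1$, hence $2 - \sum_j\alpha_j^k\lambda_j^k > 0$), we may drop all terms in \eqref{dist_dec} except $i = m(k)$ to obtain $\dist^2(x^k, C) \ge \dist^2(x^{k+1}, C) + \mu_{m(k)}^k \max_{1\le i\le m}\dist^2(x^k, C_i)$. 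With $\ell = 1$ and $a_k = \mu_{m(k)}^k$, the summability hypothesis $\sum_k \mu_{m(k)}^k = \infty$ is exactly $\sum_k a_k = \infty$, so Assumption~\ref{assp}~$(\ref{assp:h2})$ holds.

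Finally, for (ii), the strategy is to telescope \eqref{dist_dec} over a window of $s$ consecutive indices. Summing \eqref{dist_dec} for $k, k+1, \ldots, k+s-1$ and using Fej\'er monotonicity of the distances, we get $\dist^2(x^k, C) \ge \dist^2(x^{k+s}, C) + \sum_{t=0}^{s-1}\sum_{i=1}^m \mu_i^{k+t}\dist^2(x^{k+t}, C_i)$. Under $\alpha_i^k \in [\alpha_1, \alpha_2] \subseteq (0,2)$, for any $i \in I_\sigma(k+t)$ we have $\lambda_i^{k+t} \ge \sigma$ and $2 - \sum_j \alpha_j^{k+t}\lambda_j^{k+t} \ge 2 - \alpha_2$, so $\mu_i^{k+t} \ge \alpha_1\sigma(2-\alpha_2)$; moreover each $\dist^2(x^{k+t}, C_i)$ is comparable to $\dist^2(x^k, C_i)$ up to the displacement $\|x^{k+t} - x^k\|$, which in turn is controlled (again via firm nonexpansiveness applied within the window) by a bounded multiple of $\max_i \dist(x^k, C_i)$ — this is where the $(\alpha_2)^2$ and the second term in the $\min$ defining $a_k$ appear, coming from a triangle-inequality / telescoping estimate that relates $\dist(x^{k+t}, C_i)$ to $\dist(x^k, C_i)$ minus the total step length. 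By the covering condition \eqref{cycle}, every index $i \in \{1,\ldots,m\}$ lies in some $I_\sigma(k+t)$ within the window, so the double sum dominates a constant multiple of $\max_{1\le i\le m}\dist^2(x^k, C_i)$; dividing the constant by $s$ to absorb the window length gives $a_k = \min\big(\tfrac{\sigma\alpha_1(2-\alpha_2)}{s}, \tfrac{\alpha_1(2-\alpha_2)}{(\alpha_2)^2 s}\big)$, a positive constant, whence $\sum_k a_k = \infty$ and Assumption~\ref{assp}~$(\ref{assp:h2})$ holds with $\ell = s$. The delicate point here is the bookkeeping that converts ``$\dist$ to $C_i$ at time $k+t$'' into ``$\dist$ to $C_i$ at time $k$'' while keeping all constants explicit and independent of $k$; I expect this estimate, rather than the covering argument itself, to require the most care.
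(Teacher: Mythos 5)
Your proposal is correct and follows essentially the same route as the paper: the paper gets the key inequality \eqref{dist_dec} by citing Lemma~3.2 of \cite{BB96}, and your direct expansion of $\|x^{k+1}-c\|^2$ with the obtuse-angle property and the Jensen bound on $\|\sum_i\lambda_i^k\alpha_i^k(P_{C_i}(x^k)-x^k)\|^2$ reproves exactly that estimate (note only the sign slip: the projection inequality should read $\langle P_{C_i}(x^k)-x^k,\, c-x^k\rangle \ge \dist^2(x^k,\,C_i)$, equivalently $\langle x^k-P_{C_i}(x^k),\, c-x^k\rangle \le -\dist^2(x^k,\,C_i)$), while items $(i)$ and $(ii)$ proceed as in the paper — drop all but the $m(k)$ term, respectively locate for each $j$ a time $k_j$ in the window with $\lambda_j^{k_j}\ge\sigma$ and combine the step-length bound with telescoping and Fej\'er monotonicity. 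The one bookkeeping caution for $(ii)$ is that the deferred estimate is best organized as the paper does, as an upper bound $\dist(x^k,\,C_j)\le \dist(x^{k_j},\,C_j)+\sum_{p=k}^{k_j-1}\|x^p-x^{p+1}\|$ followed by Cauchy--Schwarz over at most $s$ terms, which is what produces the exact constant $\min\bigl(\sigma\alpha_1(2-\alpha_2)/s,\ \alpha_1(2-\alpha_2)/((\alpha_2)^2 s)\bigr)$; reading the triangle inequality in the other direction (lower-bounding $\dist(x^{k+t},\,C_i)$ by $\dist(x^k,\,C_i)$ minus the displacement) forces a case split when squaring and yields a worse constant than the one stated in the lemma.
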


\begin{proof}
The scheme \eqref{proj_alg} is a particular case of the the scheme 
described in Section~3 of \cite{BB96} (with $T_i^k = P_{C_i}$). Consequently, the Fej\'er monotonicity of $\{x^k\}$ follows directly from \cite[Lemma~3.2~(iv)]{BB96}. Moreover, by \cite[Lemma~3.2~$(i)$]{BB96}, we have for any $x\in C$ that
\begin{equation}\label{inq_comp}
\begin{split}
& \left\|x^k - x\right\|^2 - \left\|x^{k+1} - x\right\|^2 - \sum_{i=1}^m\alpha_i^k\lambda_i^k\Big(2 - \sum_{j=1}^m\alpha_j^k\lambda_j^k\Big)\Big\|x^k - P_{C_i}(x^k) \Big\|^2 \\
= &  \sum_{i<j}\alpha_i^k\alpha_j^k\lambda_i^k\lambda_j^k\Big\| P_{C_i}(x^k) - P_{C_j}(x^k)\Big\|^2 + 2\sum_{i=1}^m\alpha_i^k\lambda_i^k\left\langle x^k - P_{C_i}(x^k),\, P_{C_i}(x^k) - x \right\rangle \ge 0,
\end{split}
\end{equation}
where the last inequality follows from the non-negativity of $\{\alpha_i^k\}$ and $\{\lambda_i^k\}$ and the convexity of each $C_i$. We then have \eqref{dist_dec} by rearranging \eqref{inq_comp} and taking the infimum on both sides for $x\in C$. Furthermore, by the definition of $M(k)$ in \eqref{def_notation}, we have for all $m(k)\in M(k)$ that
\begin{align*}
 \dist^2(x^k,\, C)  &\ge \dist^2(x^{k + 1},\, C) + \sum_{i = 1}^m\mu_i^k\,\dist^2(x^k, \, C_i) \\ 
 &\ge \dist^2(x^{k + 1},\, C) + \mu_{m(k)}^k\max_{1\le i\le m}\dist^2(x^k,\, C_i).
\end{align*}
The conclusion $(i)$ then follows from this and assumption $\sum_{k=0}^{\infty}\mu_{m(k)}^k = \infty$ directly. 

Now we prove $(ii)$. Since $\alpha_i^k\in[\alpha_1,\,\alpha_2]$ for all $i$ and $k$, we have $ \mu_i^k\ge\alpha_1(2 - \alpha_2)\lambda_i^k$. Consequently, by \eqref{dist_dec}, the convexity of $\|\cdot\|^2$ and $\sum_{i=1}^m\lambda_i^k = 1$ with $\lambda_i^k\ge 0$, we have for all $k$ that
\begin{equation}\label{succ_change}
\begin{split}
 \left\| x^k - x^{k+1}\right\|^2 & = \Big\| x^k - \sum_{i=1}^m \lambda_i^k\left((1 - \alpha_i^k)x^k +\alpha_i^k P_{C_i}(x^k)\right)\Big\|^2 \\
 & = \Big\| \sum_{i=1}^m\lambda_i^k\alpha_i^k\big( x^k -P_{C_i}(x^k)\big)\Big\|^2  \le \sum_{i=1}^m\lambda_i^k(\alpha_i^k)^2\big\| x^k -P_{C_i}(x^k)\big\|^2 \\
 & \le  (\alpha_2)^2\sum_{i=1}^m\lambda_i^k \dist^2(x^k,\, C_i)  \le  \frac{(\alpha_2)^2}{\alpha_1(2 - \alpha_2)}\left(\dist^2(x^k,\, C)  - \dist^2(x^{k + 1},\, C)\right).
\end{split}
\end{equation}
On the other hand, we fix any $k$ and $j\in\{1, 2, \ldots, m\}$, and then know from assumption \eqref{cycle} that there exists $k_j\in\{k, k+1, \ldots, k+s-1\}$ such that $j\in I_{\sigma}(k_j)$, i.e., $\lambda_j^{k_j}\ge \sigma$ (by definition of $I_{\sigma}(k)$ in \eqref{def_notation}).  This together with \eqref{dist_dec} and $\mu_i^k \ge \alpha_1(2 - \alpha_2)\lambda_i^k$ gives
\begin{equation}\label{ineq_tau}
 \dist^2(x^{k_j},\, C)  - \dist^2(x^{k_j + 1},\, C) \ge \sum_{i = 1}^m\mu_i^{k_j}\,\dist^2(x^{k_j},\, C_i) \ge \sigma\alpha_1(2 - \alpha_2)\, \dist^2(x^{k_j},\, C_j).
\end{equation}
Furthermore, combining \eqref{succ_change} and \eqref{ineq_tau} yields
\begin{equation}\label{case_2_dec}
\begin{aligned}
& \ \ \ \   \dist^2(x^k,\, C_j) \\
&\le \norm{x^k - P_{C_j}(x^{k_j})}^2\\
&\overset{\rm (a)}\le \big(\dist(x^{k_j},\, C_j) + \left\|x^k - x^{k_j}\right\| \big)^2 \\
&\overset{\rm (b)}\le \Big(\dist(x^{k_j},\, C_j) + \sum_{p=k}^{k_j-1}\left\|x^p - x^{p+1}\right\| \Big)^2\\
& \overset{\rm (c)}\le (k_j - k + 1)\Big(\dist^2(x^{k_j},\, C_j) + \sum_{p=k}^{k_j-1}\left\|x^p - x^{p+1}\right\|^2 \Big)\\
& \overset{\rm (d)}\le s\,\Big(\frac1{\sigma\alpha_1(2 - \alpha_2)}\left(\dist^2(x^{k_j},\, C)  - \dist^2(x^{k_j + 1},\, C)\right) + \frac{(\alpha_2)^2}{\alpha_1(2 - \alpha_2)}\sum_{p=k}^{k_j-1}\left(\dist^2(x^p,\, C)  - \dist^2(x^{p + 1},\, C)\right)\Big)\\
& =  s\,\Big(\frac1{\sigma\alpha_1(2 - \alpha_2)}\left(\dist^2(x^{k_j},\, C)  - \dist^2(x^{k_j + 1},\, C)\right) +  \frac{(\alpha_2)^2}{\alpha_1(2 - \alpha_2)}\left(\dist^2(x^{k},\, C)  - \dist^2(x^{k_j},\, C)\right)\Big)\\
& \le   s\,\max\Big(\frac1{\sigma\alpha_1(2 - \alpha_2)},\,\frac{(\alpha_2)^2}{\alpha_1(2 - \alpha_2)}\Big)\left(\dist^2(x^{k},\, C) - \dist^2(x^{k_j + 1},\, C)\right)\\
& \overset{\rm (e)}\le  s\,\max\Big(\frac1{\sigma\alpha_1(2 - \alpha_2)},\,\frac{(\alpha_2)^2}{\alpha_1(2 - \alpha_2)}\Big)\left(\dist^2(x^{k},\, C)  - \dist^2(x^{k + s},\, C)\right),
\end{aligned}
\end{equation}
where (a) and (b) follow from the triangle inequality,
(c) follows from the Cauchy-Schwarz inequality, 
(d) holds because of \eqref{succ_change},
\eqref{ineq_tau} and $k_j\in\{k, k+1, \ldots, k+s-1\}$,
finally, (e) follows from the Fej\'er monotonicity of $\{x^k\}$ and the fact that $k \leq k_j \leq k+s-1$. By the arbitrariness of $j$, we take the supreme on both sides of \eqref{case_2_dec} for $j\in\{1,2,\ldots,m\}$ and rearrange it to obtain
\begin{equation*}
\dist^2(x^{k},\, C)  - \dist^2(x^{k + s},\, C) \ge \min\left(\frac{\sigma\alpha_1(2 - \alpha_2)}{s},\,\frac{\alpha_1(2 - \alpha_2)}{(\alpha_2)^2 s}\right)\max_{1\le j\le m}\dist^2(x^k,\, C_j).
\end{equation*}
Therefore, {\rm Assumption~\ref{assp}~$(\ref{assp:h2})$} holds with $\ell = s$ and $a_k = \min\left(\frac{\sigma\alpha_1(2 - \alpha_2)}{s},\,\frac{\alpha_1(2 - \alpha_2)}{(\alpha_2)^2 s}\right)$.
\end{proof}
The gist of Lemma~\ref{lemma_1} is that any iteration generated by \eqref{proj_alg} is automatically Fej\'er monotone, which is a known result, see  \cite[Lemma~3.2]{BB96}. However, not all choices of 
parameters will lead to sufficient decrease as required in Assumption~\ref{assp}~$(\ref{assp:h2})$ (e.g., if $\alpha_{i}^k = 0$ for all $i$ and $k$).
There are many conditions one can impose on the choice of parameters to get sufficient decrease and items~$(i)$ and $(ii)$ of Lemma~\ref{lemma_1} are but two simple examples that are enough to cover a number of algorithms, as we shall see. In particular, $(ii)$ in case of $\alpha_1 = \alpha_2 = 1$ is a simplified version of the assumption underlying the so-called quasi-cyclic algorithms, see \cite{blt17}.

The next step is to apply Theorem~\ref{thm_conv} to the algorithms 
covered by Lemma~\ref{lemma_1}.
We conclude that the convergence of $\{x^k\}$ is either finite or,
if item $(i)$ of Lemma~\ref{lemma_1} holds, we have
\begin{equation}\label{case_pg1}
\dist(x^k,\, C)\leq \sqrt{(\Phi_{\widehat{\kappa}}^\spadesuit)^{-1}\Big(\Phi_{\widehat{\kappa}}^\spadesuit(\dist^2(x^0,\, C) )- \sum_{i=0}^{k -2} \mu_{m(i)}^i\Big)}\  \forall\ k\ge 2.
\end{equation} 
Alternatively, if item $(ii)$ of Lemma~\ref{lemma_1} holds, we have
\begin{equation}\label{case_pg2}
\dist(x^k,\, C) \le 
\sqrt{(\Phi_{\widehat{\kappa}}^\spadesuit)^{-1}\Big(\Phi_{\widehat{\kappa}}^\spadesuit(\dist^2(x^0,\, C) )- c(k-s-(k \bmod s))/s\Big)}\  \forall\ k\ge 2s,
\end{equation}
where $c = \min\left(\frac{\sigma\alpha_1(2 - \alpha_2)}{s},\,\frac{\alpha_1(2 - \alpha_2)}{(\alpha_2)^2 s}\right)$. 

Next, we will see that  more specific choices of parameters will lead to sublinear convergence rates under H\"olderian error bounds as in Corollary~\ref{coro_abs_rate}. 

\begin{corollary}[H\"olderian error bounds and sublinear rates for projection algorithms]\label{proj_hold_ge}
	Let $\{x^k\}$ be generated by the algorithm scheme \eqref{proj_alg}. Suppose that one of the following statements holds:
	\begin{itemize}
		\item[(i)] there exist some $\tau > 0$ and $m(k)\in M(k)$ such that $\mu_{m(k)}^k \ge \tau$ for all $k$, where $\mu_i^k$ is defined as in \eqref{mu_def};
		
		\item[(ii)] $\alpha_i^k\in[\alpha_1,\,\alpha_2]$ holds for all $i$ and $k$ with some $0 <\alpha_1 \le \alpha_2 < 2$, and there exist some $\sigma \in(0, 1]$ and integer $s\ge 1$ such that \eqref{cycle} holds for all k.
		
	\end{itemize}
	If a  H\"olderian error bound holds for \eqref{CFP}, then  $\{x^k\}$ converges to some point in $C$ at least with a sublinear rate $O(k^{-p})$ for some $p > 0$. 
	In particular, if the H\"olderian error bound is uniform with exponent $\gamma\in(0,1]$, then there exist some $M > 0$ and $\theta\in(0,1)$ such that  for any $k\ge 2s$ ($k\ge 2$ if (i) holds),
	\begin{equation*}
	\dist(x^k,\, C) \le \begin{cases}
	M\, k^{-\frac1{2(\gamma^{-1} - 1)}} & {\rm if}\  \gamma\in(0,1),\\
	M\, \theta^k & {\rm if} \ \gamma = 1.
	\end{cases}
	\end{equation*}
\end{corollary}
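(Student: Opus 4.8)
The plan is to reduce Corollary~\ref{proj_hold_ge} to Corollary~\ref{coro_abs_rate} by verifying that, under either hypothesis~(i) or~(ii), the sequence $\{x^k\}$ generated by \eqref{proj_alg} satisfies Assumption~\ref{assp} \emph{with $\inf_k a_k > 0$}. Once this is in place, the conclusion follows verbatim from Corollary~\ref{coro_abs_rate}, since that corollary already derives the sublinear rate $O(k^{-p})$ from a H\"olderian error bound and the improved rate \eqref{rate_abstract} from a uniform one. So the body of the proof is just bookkeeping plus one small extra argument in case~(i).

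First I would recall Lemma~\ref{lemma_1}: the scheme \eqref{proj_alg} always produces a Fej\'er monotone sequence, so Assumption~\ref{assp}~$(\ref{assp:h1})$ holds unconditionally. For the sufficient decrease condition, under hypothesis~(ii) this is immediate from Lemma~\ref{lemma_1}~(ii): we get Assumption~\ref{assp}~$(\ref{assp:h2})$ with $\ell = s$ and the \emph{constant} sequence $a_k \equiv \min\!\big(\tfrac{\sigma\alpha_1(2-\alpha_2)}{s},\,\tfrac{\alpha_1(2-\alpha_2)}{(\alpha_2)^2 s}\big) > 0$, so trivially $\inf_k a_k > 0$ and $\sum_k a_k = \infty$. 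For hypothesis~(i), Lemma~\ref{lemma_1}~(i) gives Assumption~\ref{assp}~$(\ref{assp:h2})$ with $\ell = 1$ and $a_k = \mu_{m(k)}^k$; the hypothesis $\mu_{m(k)}^k \ge \tau$ for all $k$ gives directly $\inf_k a_k \ge \tau > 0$, which also forces $\sum_k a_k = \infty$, so Lemma~\ref{lemma_1}~(i) applies. Here I should double-check the one subtlety: Lemma~\ref{lemma_1}~(i) is stated with the hypothesis $\sum_k \mu_{m(k)}^k = \infty$, which is implied by $\mu_{m(k)}^k \ge \tau$, so there is no gap.

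With Assumption~\ref{assp} verified and $\inf_k a_k > 0$ established in both cases, I would then simply invoke Corollary~\ref{coro_abs_rate}: the sequence converges to a point of $C$ with sublinear rate $O(k^{-p})$ when a (possibly non-uniform) H\"olderian error bound holds, and with the explicit rate \eqref{rate_abstract} when the bound is uniform with exponent $\gamma$. The only cosmetic point is the threshold on $k$: in case~(ii) the relevant $\ell$ is $s$, so the rate is asserted for $k \ge 2s$, whereas in case~(i) we have $\ell = 1$, so it holds for $k \ge 2$; this matches the statement. I do not anticipate a genuine obstacle here — the real work was done in Lemma~\ref{lemma_1} and Corollary~\ref{coro_abs_rate}, and this corollary is an assembly step. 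If anything, the mildly delicate part is making sure the constants $\alpha_1, \alpha_2, \sigma, s$ from hypothesis~(ii) translate into a strictly positive lower bound on $a_k$, which is clear since $0 < \alpha_1 \le \alpha_2 < 2$ makes $\alpha_1(2-\alpha_2) > 0$.

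\begin{proof}
By Lemma~\ref{lemma_1}, the sequence $\{x^k\}$ generated by \eqref{proj_alg} is Fej\'er monotone with respect to $C$, so Assumption~\ref{assp}~$(\ref{assp:h1})$ holds.

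Suppose first that (i) holds. Then $\mu_{m(k)}^k \ge \tau > 0$ for all $k$, so $\sum_{k=0}^\infty \mu_{m(k)}^k = \infty$, and Lemma~\ref{lemma_1}~(i) shows that Assumption~\ref{assp}~$(\ref{assp:h2})$ holds with $\ell = 1$ and $a_k = \mu_{m(k)}^k$. In particular $\inf_k a_k \ge \tau > 0$.

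Suppose instead that (ii) holds. Since $0 < \alpha_1 \le \alpha_2 < 2$, we have $\alpha_1(2 - \alpha_2) > 0$, so the constant
\[
a \coloneqq \min\left(\frac{\sigma\alpha_1(2 - \alpha_2)}{s},\,\frac{\alpha_1(2 - \alpha_2)}{(\alpha_2)^2 s}\right)
\]
is strictly positive. By Lemma~\ref{lemma_1}~(ii), Assumption~\ref{assp}~$(\ref{assp:h2})$ holds with $\ell = s$ and $a_k \equiv a$, and hence $\inf_k a_k = a > 0$.

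In either case, Assumption~\ref{assp} holds with $\inf_k a_k > 0$. Since a H\"olderian error bound holds for \eqref{CFP} by hypothesis, Corollary~\ref{coro_abs_rate} applies: $\{x^k\}$ converges to some point in $C$ at least with a sublinear rate $O(k^{-p})$ for some $p > 0$, and if the H\"olderian error bound is uniform with exponent $\gamma \in (0,1]$, then there exist $M > 0$ and $\theta \in (0,1)$ such that for all $k \ge 2\ell$,
\[
\dist(x^k,\, C) \le \begin{cases}
M\, k^{-\frac1{2(\gamma^{-1} - 1)}} & {\rm if}\  \gamma\in(0,1),\\
M\, \theta^k & {\rm if} \ \gamma = 1.
\end{cases}
\]
Recalling that $\ell = 1$ under (i) and $\ell = s$ under (ii), this is exactly the stated bound, valid for $k \ge 2$ in case~(i) and for $k \ge 2s$ in case~(ii). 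This completes the proof.
\end{proof}
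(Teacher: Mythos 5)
Your proposal is correct and follows essentially the same route as the paper: both verify that hypotheses (i) and (ii) yield the corresponding items of Lemma~\ref{lemma_1}, giving Assumption~\ref{assp} with $a_k$ bounded below by a positive constant, and then invoke Corollary~\ref{coro_abs_rate} (with $\ell=1$ under (i) and $\ell=s$ under (ii)). Your write-up merely spells out the constants and the threshold on $k$ more explicitly than the paper's two-line argument, but there is no substantive difference.
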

\begin{proof}
Item $(i)$ and $(ii)$ imply items $(i)$ and $(ii)$ of Lemma~\ref{lemma_1}, respectively.	
In both cases, there exists $\nu > 0$ such that the sufficient decrease inequality \eqref{h2} holds with $a_k \geq \nu$ for every $k$. Therefore, the conditions of Corollary~\ref{coro_abs_rate} are met and the conclusion follows.
\qed\end{proof}

With the aid of the results so far, we can check that Assumption~\ref{assp} holds for the algorithms listed in Example~\ref{remark_proj} and compute their convergence rates.
\begin{theorem}[Convergence of a few common methods]\label{thm_sp_4}
Let $\{x^k\}$ be a sequence generated by one of the four algorithms MPA, POCSA (in particular, CPA), MM (in particular, MDPA) and AWPA given in Example~\ref{remark_proj}.	
The following items holds.
\begin{enumerate}[$(i)$]
	\item Assumption~\ref{assp} is satisfied. In particular, if $\Phi$ is a strict consistent error bound function for $C_1,\ldots, C_m$ and $\Phi_{\widehat{\kappa}}^\spadesuit$ is as in \eqref{def_phi} with $\hat \kappa = \norm{x^0} +2 \, \dist(0,C)$, the convergence rates of MPA, MM (in particular, MDPA), AWPA are governed by \eqref{case_pg1}. The convergence rate of POCSA (in particular, CPA) is governed by 
	\eqref{case_pg2}.
	\item Suppose that a H\"olderian error bound holds.
	 Then $\{x^k\}$ converges to some point in $C$ at least with a sublinear rate $O(k^{-p})$ for some $p > 0$. In particular, if the H\"olderian error bound is uniform with exponent $\gamma\in(0,1]$, then there exist some $M > 0$ and $\theta\in(0,1)$ such that  for any $k\ge 2m$ ($k\ge 2$ for MPA, MDPA and AWPA),
	\begin{equation*}
	\dist(x^k,\, C) \le \begin{cases}
	M\, k^{-\frac1{2(\gamma^{-1} - 1)}} & {\rm if}\  \gamma\in(0,1),\\
	M\, \theta^k & {\rm if} \ \gamma = 1.
	\end{cases}
	\end{equation*}
	
\end{enumerate}
\end{theorem}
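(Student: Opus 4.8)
The plan is to verify, for each of the four named algorithms, that the hypotheses of either item $(i)$ or item $(ii)$ of Lemma~\ref{lemma_1} are met, and then invoke Theorem~\ref{thm_conv}, Corollary~\ref{coro_abs_rate}, and Corollary~\ref{proj_hold_ge} as black boxes. The Fej\'er monotonicity (Assumption~\ref{assp}~$(\ref{assp:h1})$) is automatic for all four since each is an instance of scheme~\eqref{proj_alg}; only the sufficient decrease condition needs attention, and that amounts to computing or lower-bounding the relevant $\mu_i^k$ from \eqref{mu_def} along a maximizing index.

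First I would handle MPA and AWPA together, since both have $\alpha_i^k=1$ for all $i,k$, so $\sum_{j}\alpha_j^k\lambda_j^k=\sum_j\lambda_j^k=1$ and hence $\mu_i^k=\lambda_i^k(2-1)=\lambda_i^k$. For MPA the weights are positive constants $\nu_i$, so for any $m(k)\in M(k)$ we have $\mu_{m(k)}^k=\nu_{m(k)}\ge\min_i\nu_i=:\tau>0$, which is exactly hypothesis~$(i)$ of Corollary~\ref{proj_hold_ge}, giving $\ell=1$. For AWPA the weights are $\lambda_i^k=f(d_i^k)/\sum_jf(d_j^k)$ (or $1/m$ when $x^k\in C$, where the rate claim is vacuous); picking $m(k)\in M(k)$, i.e.\ an index attaining the max distance, monotonicity of $f$ gives $f(d_{m(k)}^k)\ge f(d_j^k)$ for all $j$, hence $\lambda_{m(k)}^k\ge 1/m$, so $\mu_{m(k)}^k\ge 1/m=:\tau>0$ and again hypothesis~$(i)$ holds with $\ell=1$. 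For MM (and its special case MDPA), by construction $\lambda_{i(k)}^k=1$ with $i(k)\in\Argmax_i\dist(x^k,C_i)=M(k)$ and $\alpha_{i(k)}^k=\lambda\in(0,2)$, while the other weights vanish; thus $\sum_j\alpha_j^k\lambda_j^k=\lambda$ and $\mu_{i(k)}^k=\lambda(2-\lambda)>0$, a positive constant independent of $k$, so hypothesis~$(i)$ of Corollary~\ref{proj_hold_ge} holds with $\ell=1$, $m(k)=i(k)$, $\tau=\lambda(2-\lambda)$. Finally, for POCSA (and CPA), $\lambda_i^k$ is the indicator of $i=t(k)$ with $t(k)=(k\bmod m)+1$, and $\alpha_{t(k)}^k\in[\epsilon,2-\epsilon]$; taking $\alpha_1=\epsilon$, $\alpha_2=2-\epsilon$, $\sigma=1$, and $s=m$, one checks that $I_\sigma(k)=\{t(k)\}$ and $\{t(k),t(k+1),\dots,t(k+m-1)\}=\{1,\dots,m\}$, so the cyclic covering condition~\eqref{cycle} holds and hypothesis~$(ii)$ of Corollary~\ref{proj_hold_ge} applies with $\ell=s=m$.

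With these checks, item~$(i)$ follows directly: Assumption~\ref{assp} holds for all four, and Theorem~\ref{thm_conv} specializes to \eqref{case_pg1} for MPA, MM (MDPA), AWPA (where $\ell=1$, $a_k=\mu_{m(k)}^k$, and $k_0=0$), and to \eqref{case_pg2} for POCSA (CPA) (where $\ell=s=m$, $a_k=c$ constant). For item~$(ii)$, since in every case the $a_k$ produced by Lemma~\ref{lemma_1} satisfy $\inf_k a_k>0$ (a positive constant in each instance), the hypothesis $\inf_k a_k>0$ of Corollary~\ref{coro_abs_rate} — equivalently the structure invoked in Corollary~\ref{proj_hold_ge} — is met, so under a H\"olderian error bound we obtain the sublinear rate $O(k^{-p})$, and the uniform-exponent refinement yields exactly the stated dichotomy with the threshold $k\ge 2m$ for POCSA/CPA (since $\ell=m$ there) and $k\ge 2$ for MPA, MDPA and AWPA (since $\ell=1$).

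I do not expect a genuine obstacle here; the statement is essentially a bookkeeping corollary that assembles Lemma~\ref{lemma_1}, Theorem~\ref{thm_conv}, Corollary~\ref{coro_abs_rate} and Corollary~\ref{proj_hold_ge}. The only mildly delicate points are: (a) being careful that the maximizing index $m(k)$ used in the $(i)$-type hypotheses can be chosen to coincide with the algorithm's own active index (immediate for MM/MDPA, and for MPA/AWPA just requires noting $\mu_i^k=\lambda_i^k$ and picking any $m(k)\in M(k)$); (b) correctly matching the iteration thresholds — $2\ell$ in Theorem~\ref{thm_conv} becomes $2m$ for the cyclic methods and $2$ for the single-index-per-step methods; and (c) handling the degenerate AWPA fallback $x^k\in C$, for which the convergence claim is trivially true since the sequence is then constant. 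None of these requires new ideas beyond what is already established.
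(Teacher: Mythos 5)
Your proposal is correct and follows essentially the same route as the paper: check the hypotheses of Lemma~\ref{lemma_1} (item $(i)$ with $\ell=1$ and a uniform positive lower bound on $\mu_{m(k)}^k$ for MPA, MM/MDPA, AWPA; item $(ii)$ with $\sigma=1$, $s=m$, $\alpha_1=\epsilon$, $\alpha_2=2-\epsilon$ for POCSA/CPA), then invoke Theorem~\ref{thm_conv} for the rates \eqref{case_pg1}/\eqref{case_pg2} and Corollaries~\ref{coro_abs_rate}/\ref{proj_hold_ge} for the H\"olderian case. Your extra detail for AWPA (why the maximizing index has weight at least $1/m$) is exactly the justification the paper leaves implicit, so there is nothing to fix.
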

\begin{proof}
First, we check item $(i)$.
By Lemma~\ref{lemma_1}, it suffices to check Assumption~\ref{assp}~$(\ref{assp:h2})$ for the four algorithms. 
For $i = 1, \ldots, m$, let
\[\mu_i^k: = \alpha_i^k\lambda_i^k(2 - \sum_{j=1}^m\alpha_j^k\lambda_j^k).
\]  
Then there exists some $m(k)\in M(k)$ such that for MPA, MM (in particular, MDPA) and AWPA we have 
\[\mu_{m(k)}^k = \nu_{m(k)} \ge \min_{1\le i\le m}\nu_i > 0, \qquad \mu_{m(k)}^k = \lambda(2 - \lambda), \qquad \mu_{m(k)}^k \ge \frac1{m}, \]  respectively. Consequently, we have $\sum_{k=0}^{\infty}\mu_{m(k)}^k = \infty$. Therefore,  from Lemma~\ref{lemma_1}~(i) we
see that Assumption~\ref{assp}~$(\ref{assp:h2})$ holds with $\ell =1$ and $a_k = \mu_{m(k)}^k \ge \tau$ for some $\tau > 0$ for MPA, MM and AWPA.

For POCSA, the assumptions in Lemma~\ref{lemma_1}~(ii) are satisfied with $\sigma = 1$, $s = m$, $\alpha_1 = \epsilon$ and $\alpha_2 = 2 - \epsilon$. Thus, POCSA (in particular,  CPA) satisfies \eqref{h2} with $\ell = m$ and $a_k  = \min\big(\frac{\epsilon^2}{m},\,\frac{\epsilon^2}{(2 - \epsilon)^2 m}\big)$. With that, we have $\sum_{k=0}^{\infty}a_k = \infty$. This completes the proof of item $(i)$.

Next, we move on to item $(ii)$. In all cases, 
the conditions in Corollary~\ref{proj_hold_ge} are met. 
Therefore, we can deduce the corresponding sublinear rates.	
\end{proof}



\begin{remark}
{\rm (Connection to existing convergence rates)} Theorem~\ref{thm_sp_4} recovers several existing convergence results. For example, it recovers the linear convergence result for MPA,  POCSA and MM under a Lipschitzian error bound established in \cite[Theorem~2.2]{BT03}, \cite[Theorem~3]{YW82} and \cite[Section~4]{as54}, respectively.
In particular, it recovers the sublinear convergence result for CPA under a H\"olderian error bound established in \cite[Proposition~4.2]{BLY14}.  
It also recovers the sublinear convergence rate for MPA and MDPA under a H\"olderian error bound, which could be obtained by \cite[Theorem~3.3]{blt17} and \cite[Corollary~3.8]{blt17}.
To the best of our knowledge, however, the sublinear rate for AWPA is new since it is not clear if the operator associated to it satisfies the conditions necessary to invoke the results in \cite{blt17}. 
\end{remark}

\section{Regular variation and comparison of convergence rates}\label{sec:rv}
Given a strict consistent error bound function $\Phi$ and some algorithm as in Section~\ref{sec:proj},  the convergence rate is governed by a fairly complicated expression depending on the inverse of the function $\Phi_{\kappa}^\spadesuit$ defined in \eqref{def_phi}, see Theorem~\ref{thm_conv}.  In this section, we provide a number of results that help  to reason about $\Phi_{\kappa}^\spadesuit$ and its inverse without actually having to compute them. 
The main tool we use is the notion of \emph{regular variation} \cite{Se76,BGT87}.

Regular variation will be helpful  because it provides  tools to analyze the asymptotic properties of functions once the so-called \emph{index of regular variation} is known, e.g., Potter's bounds (see \eqref{eq:potter_rv}). Furthermore,  it is well-understood how regular variation behaves under taking integrals, inverses, applying powers and so on, which are exactly the transformations used to obtain $(\Phi_{\kappa}^\spadesuit)^{-1}$  from the original consistent error bound function $\Phi$. With that, it is possible  to obtain bounds to $(\Phi_{\kappa}^\spadesuit)^{-1}$  without having to actually compute a closed-form expression for $(\Phi_{\kappa}^\spadesuit)^{-1}$.
We will showcase this in Theorems~\ref{thm_comp}, \ref{thm:upper} and also with a general analysis of logarithmic error bounds in Section~\ref{sec:log} and error bounds for the exponential cone in Section~\ref{sec:exp}. 


Let $\Phi$ be a function that satisfies 
items $(ii)$ and $(iii)$ of Definition~\ref{GEB} but not necessarily item $(i)$. That is, $\Phi$ is not necessarily 
related to any collection of convex sets $C_1, \ldots, C_m$.
In this case, we shall drop the adjective ``consistent'' and merely 
say that $\Phi$ is an \emph{error bound function}.
If $\Phi(\cdot, b)$ is monotone increasing for every $b > 0$, we say that $\Phi$ is a \emph{strict error bound function}.

In spite of the fact that $\Phi$ might not be attached to any particular intersection of convex sets, we can still define $\phi_{\kappa,\Phi}$ and  $\Phi_{\kappa}^{\spadesuit}$ as in \eqref{def_psi} and \eqref{def_phi}, respectively.
Let $\Phi$ and $\widehat \Phi$ be strict error bound functions. First, we will show how to draw 
conclusions about the order relationship between $(\Phi_{\kappa}^{\spadesuit})^{-1}$ and $(\widehat{\Phi}_{\kappa}^{\spadesuit})^{-1}$ using 
the order relationship between $\Phi$ and $\widehat \Phi$.
The motivation is that, given a particular $\Phi$ we would like to know whether the convergence rate afforded by $\Phi$ is faster or slower than, say,  a linear or a sublinear rate without having to compute $(\Phi_{\kappa}^{\spadesuit})^{-1}$.

We start with some basic aspects of 
the theory  of regular variation in the sense of Karamata~\cite{Se76,BGT87}.
\begin{definition}[Regularly varying functions]\label{def:rv}
	A function $f: [a,\,\infty)\rightarrow (0,\,\infty)\, (a > 0)$
	is said to be \emph{regularly varying at infinity} if it is measurable and there exists a real number $\rho$ such that 
	\begin{equation}\label{eq:rv}
	\lim_{x\rightarrow\infty}\frac{f(\lambda x)}{f(x)} = \lambda^{\rho}, \ \ \ \forall\ \lambda > 0.
	\end{equation}
	In this case, we write  $f\in \RV$. 
	Similarly, a measurable function $f:(0,a]\rightarrow(0,\,\infty)$ is said to be regularly varying at $0$ if 
	\begin{equation}\label{eq:rvz}
	\lim_{x\rightarrow 0_+}\frac{f(\lambda x)}{f(x)} = \lambda^{\rho}, \ \ \ \forall\ \lambda > 0,
	\end{equation}
	in which case we write $f \in \RVz$. The $\rho$ in \eqref{eq:rv} and \eqref{eq:rvz} is called the \emph{index} of regular variation. 
	
If the limit on the left hand side of \eqref{eq:rv} is $0$, $1$ and $+\infty$ for $\lambda$ in $(0,1)$, $\{1\}$ and $(1,\infty)$, respectively, then $f$ is said to be a function of \emph{rapid variation of index $\infty$} and 
we write $f \in \RV_{\infty}$. If $1/f \in \RV_{\infty}$, we say that $f$ is a function of \emph{rapid variation of index $-\infty$} and write $f \in \RV_{-\infty}$.
\end{definition}
The $a$ in Definition~\ref{def:rv} only plays a minor role, since we are interested in what happens when $f$ approaches the opposite side the interval. By an abuse of notation, we sometimes write ``$f \in \RV$'' meaning that $f$ restricted to some interval $[a,\infty)$ (with $a> 0$) satisfies Definition~\ref{def:rv}. We will do the same for $\RVz, \RV_{-\infty}$ and $\RV_{\infty}$.

Next, we need to discuss the behavior of 
the index of regular variation under taking inverses. 
For a monotone nondecreasing function $f: [a,\,\infty)\rightarrow (0,\,\infty)$, we define the following generalized inverse $f^{\leftarrow}(x) \coloneqq \inf\{y  \geq a \mid f(y) > x\}$.  In particular the following result holds
\begin{equation}\label{eq:rv_inv}
\begin{split}
f \in \RV \text{ with index } \rho > 0 & \Rightarrow f^{\leftarrow} \in \RV \text{ with index } 1/\rho,\\
f \in\RV\text{ with index } 0 \text{ and } f\text{ is unbounded}   & \Rightarrow f^{\leftarrow} \in \RV_{\infty},
\end{split}
\end{equation}	
see \cite[Theorem~1.5.12]{BGT87} and \cite[Proposition  2.4.4~ item(iv) and Theorem~2.4.7]{BGT87}, respectively.
Note that if $f$ is continuous and monotone increasing, then 
$f^{\leftarrow} = f^{-1}$. 

In this section, in order to avoid dealing with the differences between $f^{\leftarrow}$, $f^{-1}$
and  $f^{-}$, we  assume that the functions are all monotone increasing and continuous so that all the three inverses coincide at the points at which they are defined. This will be mentioned as needed.

Now, suppose that $f \in \RVz$ with index $\rho > 0$ is continuous monotone increasing and 
define $\hat f$ by $\hat f(x) = 1/f(1/x)$.
For $\lambda > 0$,
\begin{equation}\label{eq:rv_eq}
\lim _{x\to\infty}\frac{\hat f(\lambda x)}{\hat f(x)} = \lim _{x\to\infty}\frac{ f(1/x)}{f(1/(\lambda x))} = \lim _{t\to 0_{+}} \frac{f(\lambda t)}{f(t)} = \lambda^\rho.
\end{equation}
Therefore, $\hat f \in \RV$ with index $\rho$ and \eqref{eq:rv_inv} implies that $\hat f^{-1}$ has index $1/\rho$. Since $\hat f^{-1}(x) =  1/f^{-1}(1/x)$,  we conclude that 
\begin{equation}\label{eq:rvz_inv}
f \in \RVz \text{ with index } \rho > 0 \Rightarrow f^{-1}  \in \RVz \text{ with index } 1/\rho, 
\end{equation}	
when $f$ is monotone increasing and continuous.

We start with the following lemma, which is a particular case of  \cite[Theorem~1]{dt07}.
In what follows, if $f$ and $g$ are functions such that  $\lim _{t \to c} f(t)/g(t) = 0$ we will write that ``$f(t) = o(g(t))$ as $t \to c$''. We will consider three cases: 
$c \in \{-\infty$, $+\infty\}$ or that $t$ approaches $0$ from the right, which we will denote by writing $c = 0_+$. 

\begin{lemma}\label{key_lemma}
Assume that $f,\,g: [a,\,\infty)\rightarrow (0,\,\infty)\, (a > 0)$ are continuous monotone increasing unbounded functions, and $f\in \RV$ or $g\in \RV$. If $f(x) = o(g(x))$ as $x\rightarrow\infty$, then $g^{-1}(x) = o(f^{-1}(x))$ as $x\rightarrow\infty$.
\end{lemma}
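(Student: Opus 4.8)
The plan is to reduce the claim to a statement about regularly varying functions at infinity and then invoke the inversion rule \eqref{eq:rv_inv}. Suppose first that $g \in \RV$ with index $\rho$; since $g$ is unbounded and monotone increasing on $[a,\infty)$, we must have $\rho \ge 0$. I would split into two cases according to whether $\rho > 0$ or $\rho = 0$. If $\rho > 0$, then \eqref{eq:rv_inv} gives $g^{-1} \in \RV$ with index $1/\rho > 0$, so in particular $g^{-1}$ is what is sometimes called ``self-neglecting'' enough that $g^{-1}(\lambda x)/g^{-1}(x) \to \lambda^{1/\rho}$. The key elementary fact I would use is: if $g^{-1} \in \RV$ and $f(x) = o(g(x))$ with $f,g$ continuous increasing unbounded, then $g^{-1}(x) = o(f^{-1}(x))$. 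To see this, fix $\lambda > 1$. Because $f(x) = o(g(x))$, for all large $x$ we have $f(x) < g(x/\lambda)$ (here I use that $g(x/\lambda)/g(x) \to \lambda^{-\rho}$, a fixed positive constant, so $g(x/\lambda)$ and $g(x)$ are comparable up to a constant, and $f(x)/g(x) \to 0$ beats any constant). Applying the increasing inverse $f^{-1}$ — equivalently, evaluating at $x = f(y)$ — yields $y < g^{-1}\big(f(y)\big)\cdot$(no, rather) $f^{-1}(g(x/\lambda)) > x$, hence $g^{-1}(t) \le \lambda^{-1} f^{-1}(t)$ for all large $t$. Since $\lambda > 1$ was arbitrary, $g^{-1}(t)/f^{-1}(t) \to 0$, i.e. $g^{-1}(t) = o(f^{-1}(t))$ as $t \to \infty$.

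The case $\rho = 0$ must be handled separately: here \eqref{eq:rv_inv} tells us $g^{-1} \in \RV_{\infty}$, a function of rapid variation. The same argument structure still works — fix any $\lambda > 1$, use $f(x) = o(g(x))$ together with the slow variation $g(x/\lambda)/g(x) \to 1$ to deduce $f(x) < g(x/\lambda)$ for large $x$, and invert — but one should note that in this degenerate case $g^{-1}$ grows faster than any power, so the conclusion $g^{-1}(t) = o(f^{-1}(t))$ is correspondingly stronger and still follows from exactly the inequality $g^{-1}(t) \le \lambda^{-1} f^{-1}(t)$ for large $t$, $\lambda$ arbitrary.

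If instead it is $f$ that lies in $\RV$ rather than $g$, I would argue symmetrically: the hypothesis $f(x) = o(g(x))$ forces $g$ to grow at least as fast as $f$, and one still gets, for fixed $\lambda > 1$, the inequality $f(\lambda x) < g(x)$ for large $x$ — here using $f(\lambda x)/f(x) \to \lambda^{\rho_f}$ a fixed constant and $f(x)/g(x) \to 0$ — which upon inversion gives $\lambda^{-1} g^{-1}(t) \le$ wait, more carefully: $f(\lambda x) < g(x)$ gives $\lambda x < f^{-1}(g(x))$, i.e. $g^{-1}(t) < \lambda^{-1} f^{-1}(t)$ for large $t$, and again letting $\lambda \uparrow \infty$ (or just $\lambda$ arbitrary $> 1$) yields the claim. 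Since this is stated as a particular case of \cite[Theorem~1]{dt07}, I would also be content simply to cite that reference for the general principle and present the short inversion argument above only for self-containedness.

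The main obstacle I anticipate is purely bookkeeping: making sure the inequality ``$f(x) < g(x/\lambda)$ for large $x$'' is correctly derived from $f = o(g)$ combined with the regular-variation comparison $g(x/\lambda) \asymp g(x)$, and then that the monotone inverse is applied in the correct direction. There are no analytic subtleties beyond the definitions of $\RV$, $\RVz$, $\RV_{\pm\infty}$ and the inversion rule \eqref{eq:rv_inv}, all of which are available; the only care needed is the case distinction $\rho = 0$ versus $\rho > 0$ (and, symmetrically, which of $f,g$ carries the regular-variation hypothesis), since the index $1/\rho$ of the inverse formally blows up when $\rho = 0$ and must be replaced by the rapid-variation statement.
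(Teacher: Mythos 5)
Your argument is correct, but it takes a genuinely different route from the paper: the paper proves this lemma in two lines by citing Theorem~1 of \cite{dt07} (stated for the generalized inverse $f^{\leftarrow}$) and observing that continuity plus strict monotonicity force $f^{\leftarrow}=f^{-1}$ and $g^{\leftarrow}=g^{-1}$, whereas you give a self-contained proof. Your core mechanism is sound: if $g\in\RV$ with index $\rho\ge 0$, then for fixed $\lambda>1$ one has $f(x)/g(x/\lambda)=\bigl(f(x)/g(x)\bigr)\bigl(g(x)/g(x/\lambda)\bigr)\to 0\cdot\lambda^{\rho}=0$, hence $f(x)<g(x/\lambda)$ for large $x$; applying the increasing inverse and substituting $t=f(x)$ (the tail surjectivity needed here is exactly where continuity and unboundedness enter) gives $g^{-1}(t)\le \lambda^{-1}f^{-1}(t)$ for all large $t$, and letting $\lambda>1$ be arbitrary yields $g^{-1}(t)=o(f^{-1}(t))$; the symmetric case $f\in\RV$ via $f(\lambda x)<g(x)$ is handled the same way. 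Two remarks: your appeal to \eqref{eq:rv_inv} and to rapid variation of $g^{-1}$ when $\rho=0$ is superfluous --- the inequality argument never uses any property of the inverse beyond monotonicity, and the cases $\rho=0$ and $\rho>0$ need no separation since only finiteness and nonnegativity of the index are used; and the inversion sentence in your first paragraph is garbled as written (the clean chain is $f(x)<g(x/\lambda)\Rightarrow g^{-1}(f(x))<x/\lambda\Rightarrow g^{-1}(t)<\lambda^{-1}f^{-1}(t)$), though the inequality you land on is the right one. What your approach buys is self-containedness and the observation that the full strength of the cited inversion theory is unnecessary under the continuity hypotheses; what the paper's approach buys is brevity and coverage of the general non-continuous setting handled in \cite{dt07}.
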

\begin{proof}
Theorem~1 of  \cite{dt07} states that if $f,g:[a,\,\infty)\rightarrow (0,\,\infty)\, (a > 0)$ are monotone increasing unbounded functions such that $f(x) = o(g(x))$ as $x\rightarrow\infty$ and at least one among $f,g$ belongs to $\RV$ then \[
g^{\leftarrow}(x) = o(f^{\leftarrow}(x)).
\]
 Under the hypothesis that $f,g$ are continuous and monotone increasing we have $f^{\leftarrow} = f^{-1}$ and 
$g^{\leftarrow} = g^{-1}$, so the result follows.
\end{proof}

Using Lemma~\ref{key_lemma}, we establish the following comparison theorem.
\begin{theorem}\label{thm_comp}
Let $\kappa > 0$ and $\Phi$ and $\widehat{\Phi}$ be two strict error bound functions  satisfying:
\begin{enumerate}[$(i)$]
	\item\label{a1}  $\Phi(\cdot,\,\kappa)$ and $\widehat{\Phi}(\cdot,\,\kappa) $ are continuous,
	\item\label{a3} $\Phi_{\kappa}^\spadesuit(t)\rightarrow -\infty$ and 
	$\widehat{\Phi}_{\kappa}^\spadesuit(t)\rightarrow -\infty$ 	
	 as  $t\rightarrow 0_+$.
\end{enumerate}
Then, the following statements hold.
\begin{enumerate}[$(a)$]
	\item\label{c1} If 	$\Phi(\cdot,\,\kappa)$ belongs to $\RVz$ with index $\rho > 0$, then $\Psi$ such that 
	$\Psi(t) \coloneqq - \Phi_{\kappa}^\spadesuit(1/t)$
	belongs to $\RV$ with index $(1/\rho)-1$.
	\item\label{c2} If at least one among 
	$\Phi(\cdot,\,\kappa), \widehat{\Phi}(\cdot,\,\kappa)$ belongs to $\RVz$ with index $\rho > 0$ and  $\Phi(a,\,\kappa) = o(\widehat{\Phi}(a,\,\kappa))$  as $a \to 0_+$, then 
\begin{equation*}
    (\Phi_{\kappa}^{\spadesuit})^{-1}(s) = o\left((\widehat{\Phi}_{\kappa}^{\spadesuit})^{-1}(s)\right)\ \ {\rm as}\ \ s\rightarrow-\infty.
\end{equation*}
\end{enumerate}
\end{theorem}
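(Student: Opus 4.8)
The plan is to reduce everything to Lemma~\ref{key_lemma} by translating the statements about $\Phi_{\kappa}^\spadesuit$ and $\widehat{\Phi}_{\kappa}^\spadesuit$ near $0_+$ (and $-\infty$) into statements about suitable auxiliary functions near $+\infty$, where the machinery of regular variation applies directly. The natural change of variables is $t \mapsto 1/t$, matching the definition $\Psi(t) \coloneqq -\Phi_{\kappa}^\spadesuit(1/t)$ in item $(\ref{c1})$; by hypothesis $(\ref{a3})$, $\Psi$ is a positive function on some interval $[a,\infty)$, and since $\Phi_{\kappa}^\spadesuit$ is continuous and monotone increasing on $(0,\sup\phi_{\kappa,\Phi})$ (Proposition~\ref{prop_phi}), $\Psi$ is continuous, monotone increasing, and unbounded there. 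The same applies to $\widehat\Psi(t) \coloneqq -\widehat\Phi_{\kappa}^\spadesuit(1/t)$.

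\textbf{Step 1: prove $(\ref{c1})$.} First I would compute the index of regular variation of $\Psi$. Recall $(\Phi_{\kappa}^\spadesuit)'(u) = 1/\phi_{\kappa,\Phi}^{-}(u)$ from Proposition~\ref{prop_phi}, and $\phi_{\kappa,\Phi}(u) = \Phi(\sqrt u,\kappa)^2$ from \eqref{def_psi}. Since $\Phi(\cdot,\kappa) \in \RVz$ with index $\rho>0$, the function $u \mapsto \Phi(\sqrt u,\kappa)^2$ is in $\RVz$ with index $2\rho \cdot \tfrac12 = \rho$ (composition with $\sqrt{\cdot}$ halves the index, squaring doubles it), so by \eqref{eq:rvz_inv} its inverse $\phi_{\kappa,\Phi}^{-} = \phi_{\kappa,\Phi}^{-1}$ is in $\RVz$ with index $1/\rho$; hence $(\Phi_{\kappa}^\spadesuit)' \in \RVz$ with index $-1/\rho$. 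Differentiating $\Psi(t) = -\Phi_{\kappa}^\spadesuit(1/t)$ gives $\Psi'(t) = (\Phi_{\kappa}^\spadesuit)'(1/t)\cdot t^{-2}$, so $\Psi' \in \RV$ with index $(1/\rho) - 2$ (the reflection $1/t$ converts $\RVz$ index $-1/\rho$ into $\RV$ index $1/\rho$, then $t^{-2}$ subtracts $2$). By the Karamata integration theorem (e.g.\ \cite[Proposition~1.5.8]{BGT87}), since $(1/\rho)-2 > -1$ would give $\Psi \in \RV$ with index $(1/\rho)-1$; and if $(1/\rho) - 2 \le -1$, i.e.\ $\rho \ge 1$, one still gets $\Psi \in \RV$ with index $(1/\rho)-1 \le 0$ because $\Psi \to \infty$ forces the integral of $\Psi'$ to dominate — here one invokes the appropriate case of Karamata's theorem (integrating a regularly varying function whose integral diverges). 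A cleaner route, avoiding case analysis, is to write $\Psi(t) = \Psi(a) + \int_a^t \Psi'(u)\,du$ and apply \cite[Proposition~1.5.8 or Theorem~1.5.11]{BGT87} using that $\Psi$ is known to be unbounded. The main obstacle here is bookkeeping the index under the three operations (square-root composition, reciprocal reflection, multiplication by a power) without sign errors; I would state each as a one-line lemma-style remark referencing \eqref{eq:rvz_inv} and \eqref{eq:rv_eq}.

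\textbf{Step 2: prove $(\ref{c2})$.} Given $\Phi(a,\kappa) = o(\widehat\Phi(a,\kappa))$ as $a\to 0_+$, I want to deduce $\Psi(t) = o(\widehat\Psi(t))$ as $t\to\infty$, then apply Lemma~\ref{key_lemma} to the pair $(\Psi,\widehat\Psi)$, and finally translate back. The comparison $\Psi = o(\widehat\Psi)$ at $\infty$ is essentially the statement ``a pointwise-smaller error bound produces a slower-diverging $\Phi_{\kappa}^\spadesuit$'': from $\Phi \le \widehat\Phi$ near $0$ (which follows from $\Phi = o(\widehat\Phi)$) one gets $\phi_{\kappa,\Phi} \le \phi_{\kappa,\widehat\Phi}$ near $0$, hence $\phi_{\kappa,\Phi}^- \le \phi_{\kappa,\widehat\Phi}^-$ (generalized inverse is monotone; cf.\ Lemma~\ref{inv_lemma}), hence the integrand $1/\phi_{\kappa,\Phi}^-$ is larger, so $-\Phi_{\kappa}^\spadesuit(t) = \int_t^\delta 1/\phi_{\kappa,\Phi}^- \ge \int_t^\delta 1/\phi_{\kappa,\widehat\Phi}^- = -\widehat\Phi_{\kappa}^\spadesuit(t)$ near $0$; this gives $\Psi \ge \widehat\Psi$, the wrong direction, so a bare inequality is not enough and one genuinely needs the $o(\cdot)$ strengthening. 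To get $\Psi = o(\widehat\Psi)$ I would argue quantitatively: since $\Phi(a,\kappa) = o(\widehat\Phi(a,\kappa))$, for any $\varepsilon>0$ there is $a_0$ with $\Phi(a,\kappa) \le \varepsilon\,\widehat\Phi(a,\kappa)$ for $a \le a_0$, so $\phi_{\kappa,\Phi}(u) \le \varepsilon^2 \phi_{\kappa,\widehat\Phi}(u)$ for small $u$, whence $\phi_{\kappa,\Phi}^-(s) \le \phi_{\kappa,\widehat\Phi}^-(s/\varepsilon^2)$ for small $s$ — and then a substitution in the integral defining $\Phi_{\kappa}^\spadesuit$ yields, up to an additive constant, $-\Phi_{\kappa}^\spadesuit(t) \le \varepsilon^{2}\cdot\big(-\widehat\Phi_{\kappa}^\spadesuit(\varepsilon^{2} t)\big) + O(1)$; using that $\widehat\Psi$ (hence $-\widehat\Phi_{\kappa}^\spadesuit$) is regularly varying or at least slowly-enough varying — which is where hypothesis ``at least one among $\Phi(\cdot,\kappa),\widehat\Phi(\cdot,\kappa)$ is in $\RVz$'' enters, via the Uniform Convergence Theorem \cite[Theorem~1.5.2]{BGT87} controlling $\widehat\Phi_{\kappa}^\spadesuit(\varepsilon^2 t)/\widehat\Phi_{\kappa}^\spadesuit(t)$ — one concludes $\limsup_{t\to\infty}\Psi(t)/\widehat\Psi(t) \le C\varepsilon^{2-\text{(something)}}$, and letting $\varepsilon\to 0$ gives $\Psi = o(\widehat\Psi)$.

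\textbf{Step 3: conclude.} Having $\Psi = o(\widehat\Psi)$ at $\infty$ with $\Psi,\widehat\Psi$ continuous, monotone increasing, unbounded, and (by Step 1, or directly from the hypothesis that one of $\Phi(\cdot,\kappa),\widehat\Phi(\cdot,\kappa)$ is in $\RVz$) at least one of $\Psi,\widehat\Psi$ in $\RV$, Lemma~\ref{key_lemma} gives $\widehat\Psi^{-1}(x) = o(\Psi^{-1}(x))$ as $x\to\infty$. Now unwind: $\Psi^{-1}(x) = 1/(\Phi_{\kappa}^\spadesuit)^{-1}(-x)$ and similarly for $\widehat\Psi$, so $\widehat\Psi^{-1}(x) = o(\Psi^{-1}(x))$ reads $1/(\widehat\Phi_{\kappa}^\spadesuit)^{-1}(-x) = o\big(1/(\Phi_{\kappa}^\spadesuit)^{-1}(-x)\big)$, i.e.\ $(\Phi_{\kappa}^\spadesuit)^{-1}(-x) = o\big((\widehat\Phi_{\kappa}^\spadesuit)^{-1}(-x)\big)$ (both are positive and tend to $0$), which upon setting $s = -x \to -\infty$ is exactly the claim. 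The step I expect to be the genuine obstacle is Step 2 — turning the hypothesis $\Phi = o(\widehat\Phi)$ into $\Phi_{\kappa}^\spadesuit = o(\widehat\Phi_{\kappa}^\spadesuit)$ at the level of the integrals, since the naive monotonicity bound points the wrong way and one must carefully exploit the regular-variation hypothesis (through the Uniform Convergence Theorem) together with a change of variables inside the defining integral; the rest is routine bookkeeping of indices and inverses.
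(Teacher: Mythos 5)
Your overall architecture --- pass to $\Psi(t)=-\Phi_{\kappa}^\spadesuit(1/t)$, $\widehat\Psi(t)=-\widehat{\Phi}_{\kappa}^\spadesuit(1/t)$, establish $\Psi=o(\widehat\Psi)$ at $\infty$, apply Lemma~\ref{key_lemma}, and unwind the inverses --- is exactly the second half of the paper's proof, and your Step 1 (Karamata's integration theorem applied to $\Psi'$ in place of the paper's direct L'Hospital computation \eqref{big_1}) is a legitimate variant, provided you observe that $\rho>1$ is incompatible with hypothesis ($\ref{a3}$) (then $1/\phi_{\kappa,\Phi}^{-1}\in\RVz$ with index $-1/\rho>-1$ is integrable at $0$, so $\Phi_{\kappa}^\spadesuit$ has a finite limit) and that $\rho=1$ is the divergent-integral case of Karamata's theorem. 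The genuine gap is in Step 2. First, a direction/scaling slip: from $\Phi(a,\kappa)\le\varepsilon\,\widehat\Phi(a,\kappa)$ near $0$ one gets $\phi_{\kappa,\Phi}^{-1}(s)\ge\phi_{\kappa,\widehat\Phi}^{-1}(s/\varepsilon^2)$ (not $\le$), and the substitution in \eqref{def_phi} then yields $-\Phi_{\kappa}^\spadesuit(t)\le\varepsilon^2\bigl(-\widehat{\Phi}_{\kappa}^\spadesuit(t/\varepsilon^2)\bigr)+C_\varepsilon$, i.e.\ $\Psi(x)\le\varepsilon^2\widehat\Psi(\varepsilon^2x)+C_\varepsilon$. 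This matters: with the form you wrote, $-\Phi_{\kappa}^\spadesuit(t)\le\varepsilon^2\bigl(-\widehat{\Phi}_{\kappa}^\spadesuit(\varepsilon^2t)\bigr)+O(1)$, the ratio you must control is $\widehat\Psi(x/\varepsilon^2)/\widehat\Psi(x)\to\varepsilon^{-2\sigma}$ with $\sigma=(1/\hat\rho)-1$ (here $\hat\rho$ is the index of $\widehat\Phi(\cdot,\kappa)$), so your bound becomes $\limsup\Psi/\widehat\Psi\le\varepsilon^{2-2\sigma}$, which does \emph{not} tend to $0$ when $\hat\rho\le 1/2$; with the correct scaling the exponent is $2+2\sigma=2/\hat\rho>0$ and the argument closes. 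Second, your estimate invokes regular variation of $\widehat\Psi$ (via the Uniform Convergence Theorem), but item ($\ref{c2}$) only assumes that \emph{one} of $\Phi(\cdot,\kappa),\widehat\Phi(\cdot,\kappa)$ is in $\RVz$; when it is $\Phi$ that is regularly varying you must run the mirrored estimate $\widehat\Psi(x)\ge\varepsilon^{-2}\Psi(x/\varepsilon^2)-C_\varepsilon$ and use the regular variation of $\Psi$ obtained in Step 1, and this case is not covered by your sketch.

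For contrast, the paper avoids both issues by proving $\Psi=o(\widehat\Psi)$ without any $\varepsilon$-scaling: from $\Phi(a,\kappa)=o(\widehat\Phi(a,\kappa))$ it gets $\phi_{\kappa,\Phi}=o(\phi_{\kappa,\widehat\Phi})$ at $0_+$, applies Lemma~\ref{key_lemma} once to the reflections $w(x)=1/\phi_{\kappa,\Phi}(1/x)$ and $\widehat w(x)=1/\phi_{\kappa,\widehat\Phi}(1/x)$ (only one of which need be regularly varying) to deduce $\phi_{\kappa,\widehat\Phi}^{-1}(t)=o\bigl(\phi_{\kappa,\Phi}^{-1}(t)\bigr)$ as $t\to0_+$, and then uses L'Hospital --- this is precisely where hypothesis ($\ref{a3}$) enters --- to conclude $\Phi_{\kappa}^\spadesuit(t)=o\bigl(\widehat{\Phi}_{\kappa}^\spadesuit(t)\bigr)$ at $0_+$, i.e.\ $\Psi=o(\widehat\Psi)$; the remainder coincides with your Step 3. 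Your plan is repairable along the lines indicated above, but as written Step 2 does not establish the claimed comparison in the full generality of item ($\ref{c2}$).
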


\begin{proof}

First we prove items~\eqref{c1} and \eqref{c2} simultaneously by considering the case where $\Phi(\cdot,\,\kappa)\in \RVz$  with index $\rho > 0$. 
By assumption $\Phi(\cdot,\,\kappa)$ is monotone increasing and continuous, so \eqref{eq:rvz_inv} implies that 
$\Phi(\cdot,\,\kappa)^{-1} \in \RVz$ has index $1/\rho$.
From the definition of $\phi_{\kappa,\Phi}$ in \eqref{def_psi}, we have for any $\lambda > 0$,
\begin{equation}\label{rho_p}
\lim_{t\rightarrow 0_+}\frac{\phi_{\kappa,\Phi}(\lambda t)}{\phi_{\kappa,\Phi}(t)} = \lim_{t\rightarrow 0_+}\frac{[\Phi(\sqrt{\lambda t},\,\kappa)]^2}{[\Phi(\sqrt{t},\,\kappa)]^2} = \lim_{t\rightarrow 0_+}\Big(\frac{\Phi(\sqrt{\lambda}t,\,\kappa)}{\Phi(t,\,\kappa)}\Big)^2 = \lambda^{\rho}.
\end{equation}
Because $\Phi(\cdot,\kappa)$ is monotone increasing and continuous, the same is true of $\phi_{\kappa,\Phi}$ and 
$\phi_{\kappa,\Phi}^{-}$ coincides with the usual inverse $\phi_{\kappa,\Phi}^{-1}$.
Therefore, we have from \eqref{eq:rvz_inv} and \eqref{rho_p} that $\phi_{\kappa,\Phi}^{-1}\in\RVz$ with index $1/\rho$, namely,
\begin{equation}\label{mu_p}
\lim_{t\rightarrow 0_+}\frac{\phi^{-1}_{\kappa,\Phi}(\lambda t)}{\phi^{-1}_{\kappa,\Phi}(t)} = \lambda^{1/\rho}.
\end{equation}
Moreover, we see from assumptions in \eqref{c2} that 
\begin{equation}\label{phi_ky}
\lim_{s\rightarrow 0_+}\frac{\phi_{\kappa,\Phi}(s)}{\phi_{\kappa,\widehat{\Phi}}(s)}
= \lim_{s\rightarrow 0_+}\frac{[\Phi(\sqrt{s},\,\kappa)]^2}{[\widehat{\Phi}(\sqrt{s},\,\kappa)]^2} = \lim_{s\rightarrow 0_+}\left(\frac{\Phi(s,\,\kappa)}{\widehat{\Phi}(s,\,\kappa)}\right)^2 = 0.
\end{equation}
Therefore, $\phi_{\kappa,\Phi}$ and $\phi_{\kappa,\hat \Phi}$ are monotone increasing continuous functions with
\begin{equation}\label{orig_phi}
\phi_{\kappa,\Phi},\phi_{\kappa,\Phi}^{-1}\in \RVz \ \ \ {\rm and}\ \ \  \phi_{\kappa,\Phi}(s) = o\big(\phi_{\kappa,\widehat{\Phi}}(s)\big)\ \ {\rm as}\ \ s\rightarrow 0_+.
\end{equation}
Next, we define
\begin{equation*}
    w(x): = \frac{1}{\phi_{\kappa,\Phi}(1/x)},\ \ \ \     \widehat{w}(x): = \frac{1}{\phi_{\kappa,\widehat{\Phi}}(1/x)}, \ \ \ x > 0.
\end{equation*}
With that, $w$ and $\widehat{w}$ are unbounded continuous monotone increasing functions. Analogous to the computations in \eqref{eq:rv_eq}, we have  $w \in \RV$ with index $\rho$. 
Furthermore, from \eqref{phi_ky}  we obtain
\begin{equation}\label{small_2}
   0 = \lim_{s\rightarrow 0_+}\frac{\phi_{\kappa,\Phi}(s)}{\phi_{\kappa,\widehat{\Phi}}(s)} = \lim_{x\rightarrow\infty}\frac{\frac{1}{\phi_{\kappa,\widehat{\Phi}}(1/x)}}{\frac{1}{\phi_{\kappa,\Phi}(1/x)}} = \lim_{x\rightarrow\infty}\frac{\widehat{w}(x)}{w(x)},
\end{equation}
i.e., $\widehat{w}(x) = o(w(x))$ as $x \to \infty$.
In view of \eqref{small_2}, we 
can invoke Lemma~\ref{key_lemma} (by restricting $w$ and $\widehat{w}$ to some interval $[a,\,\infty)$), which leads to
\begin{equation}\label{1_lem}
0 = \lim_{x\rightarrow\infty}\frac{w^{-1}(x)}{\widehat{w}^{-1}(x)} =  \lim_{x\rightarrow\infty}\frac{\frac{1}{\phi_{\kappa,\Phi}^{-1}(1/x)}}{\frac{1}{\phi_{\kappa,\widehat{\Phi}}^{-1}(1/x)}} = \lim_{t\rightarrow 0_+}\frac{\phi_{\kappa,\widehat{\Phi}}^{-1}(t)}{\phi_{\kappa,\Phi}^{-1}(t)}.
\end{equation}
From Proposition~\ref{prop_phi} we have that $\Phi_{\kappa}^\spadesuit $ and $\widehat{\Phi}_{\kappa}^\spadesuit$ are monotone increasing continuously differentiable functions.
Using L'Hospital's rule in combination with assumption ($\ref{a3}$), we have from \eqref{1_lem} that
\begin{equation}\label{big_phi}
    \lim_{t\rightarrow 0_+}\frac{\Phi_{\kappa}^\spadesuit(t)}{\widehat{\Phi}_{\kappa}^\spadesuit(t)} = \lim_{t\rightarrow 0_+}\frac{\left(\Phi_{\kappa}^\spadesuit\right)'(t)}{\left(\widehat{\Phi}_{\kappa}^\spadesuit\right)'(t)} = \lim_{t\rightarrow 0_+}\frac{\frac{1}{\phi_{\kappa,\Phi}^{-1}(t)}}{\frac{1}{\phi_{\kappa,\widehat{\Phi}}^{-1}(t)}} = \lim_{t\rightarrow 0_+}\frac{\phi_{\kappa,\widehat{\Phi}}^{-1}(t)}{\phi_{\kappa,\Phi}^{-1}(t)} = 0.
\end{equation}
Now, we define
\begin{equation*}
    \Psi(t) \coloneqq -\Phi_{\kappa}^{\spadesuit}(1/t),\qquad
     \widehat{\Psi}(t) \coloneqq -\widehat{\Phi}_{\kappa}^{\spadesuit}(1/t), \ \ \ t > 0.
\end{equation*}
Since $\Phi_{\kappa}^\spadesuit(t),\,\widehat{\Phi}_{\kappa}^\spadesuit(t)$ both go to $-\infty$ as $t\rightarrow 0_+$ and  are monotone increasing (Proposition~\ref{prop_phi}), we have  that $ \Psi$ and  $\widehat{\Psi}$ are  monotone increasing and go to $+\infty$ as $t \to \infty$. Moreover, we have
\begin{equation}\label{big_1}
\begin{split}
     \lim_{x\rightarrow\infty}\frac{\Psi(\lambda x)}{\Psi(x)} = \lim_{t\rightarrow 0_+}\frac{\Phi_{\kappa}^{\spadesuit}(t)}{\Phi_{\kappa}^{\spadesuit}(\lambda t)} \overset{(a)}{=} \lim_{t\rightarrow 0_+}\frac{\left(\Phi_{\kappa}^\spadesuit\right)'(t)}{\lambda\left(\Phi_{\kappa}^\spadesuit\right)'(\lambda t)} = \lim_{t\rightarrow 0_+}\frac{\phi_{\kappa,\Phi}^{-1}(\lambda t)}{\lambda\phi_{\kappa,\Phi}^{-1}(t)} \overset{(b)}{=} \lambda^{(1/\rho) - 1},
\end{split}
\end{equation}
where (a) follows from L'Hospital's rule and (b) follows from \eqref{mu_p}.
That is, $\Psi \in \RV$ with index $(1/\rho) -1$, which proves that item~\eqref{c1} holds.
 On the other hand, we see from \eqref{big_phi} that
\begin{equation}\label{big_2}
    0 = \lim_{t\rightarrow 0_+}\frac{\Phi_{\kappa}^{\spadesuit}(t)}{\widehat{\Phi}_{\kappa}^{\spadesuit}(t)} = \lim_{x\rightarrow\infty}\frac{-\Phi_{\kappa}^{\spadesuit}(1/x)}{-\widehat{\Phi}_{\kappa}^{\spadesuit}(1/x)} = \lim_{x\rightarrow\infty}\frac{\Psi(x)}{\widehat{\Psi}(x)}.
\end{equation}
Combining  \eqref{big_1} and \eqref{big_2}, we may use Lemma~\ref{key_lemma} again (by restricting $\Psi$ and $\widehat{\Psi}$ to some interval $[a,\,\infty)$) to obtain
\begin{equation}\label{eq:final}
  0 = \lim_{x\rightarrow\infty}\frac{\widehat{\Psi}^{-1}(x)}{\Psi^{-1}(x)}   = \lim_{x\rightarrow \infty}\frac{\frac{1}{\left(\widehat{\Phi}_{\kappa}^{\spadesuit}\right)^{-1}(-x)}}{\frac{1}{\left(\Phi_{\kappa}^{\spadesuit}\right)^{-1}(-x)}} =   \lim_{s\rightarrow -\infty}\frac{(\Phi_{\kappa}^{\spadesuit})^{-1}(s)}{(\widehat{\Phi}_{\kappa}^{\spadesuit})^{-1}(s)}.
\end{equation}
This completes the proof of item~\eqref{c2} when $\Phi(\cdot,\,\kappa)\in \RVz$ has index $\rho > 0$. 

If $\widehat\Phi(\cdot,\,\kappa)\in \RVz$ has index $\rho > 0$, the proof is of item~\eqref{c2} is analogous since Lemma~\ref{key_lemma} only requires a regular variation assumption for one of the functions.
The difference is that at \eqref{rho_p}, \eqref{mu_p}, \eqref{orig_phi}, \eqref{big_1} we would 
draw conclusions about 
functions derived from $\widehat\Phi$ but all the other equations would remain the same. For example, in \eqref{big_1} we would conclude that $ \lim_{x\rightarrow\infty}\frac{\widehat \Psi(\lambda x)}{\widehat \Psi(x)}  = \lambda^{(1/\rho)-1}$, which would lead to the exact same  \eqref{eq:final}.
\qed\end{proof}

\begin{remark}[On  assumption ($\ref{a3}$) of Theorem~\ref{thm_comp}]\label{rem:as}
Because of Proposition~\ref{prop:asym}, in many cases it is not necessary to check assumption ($\ref{a3}$) of Theorem~\ref{thm_comp} explicitly.
%
\end{remark}

Following Theorem~\ref{thm_comp}, we will prove bounds for the $(\Phi_{\kappa}^{\spadesuit})^{-1}$ function.
This will require the so-called \emph{Potter bounds}.

\begin{lemma}[Potter bounds]
\label{lem:pbs}
If $f \in \RV$ with index $\rho$, then for every $A > 1, \delta > 0$, there exists $M > 0$ such that 
$x \geq M, y \geq M$ implies
\begin{equation}\label{eq:potter_rv}
\frac{f(x)}{f(y)} \leq A\max\left \{\left(\frac{x}{y}\right)^{\rho -\delta}, \left(\frac{x}{y}\right)^{\rho +\delta}  \right\}.
\end{equation}
If $f \in \RVz$ with index $\rho$, 
then for any $A > 1, \delta > 0$, there exists $M > 0$ such that $t \leq M, s\leq M$ implies
\begin{equation}\label{eq:potter}
\frac{f(t)}{f(s)} \leq A\max\left \{\left(\frac{t}{s}\right)^{\rho -\delta}, \left(\frac{t}{s}\right)^{\rho +\delta}  \right\}.
\end{equation}
\end{lemma}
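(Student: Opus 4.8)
The plan is to derive the Potter bounds directly from the definition of regular variation together with the uniform convergence theorem, which is the standard route in Karamata theory (see \cite[Theorem~1.5.6]{BGT87} for the uniform convergence statement). First I would treat the $\RV$ case at infinity. Recall that $f \in \RV$ with index $\rho$ means $f(\lambda x)/f(x) \to \lambda^\rho$ for each $\lambda > 0$; the uniform convergence theorem upgrades this to convergence uniform in $\lambda$ on compact subsets of $(0,\infty)$. Writing $g(x) \coloneqq f(x)/x^\rho$, one checks that $g$ is slowly varying (index $0$), so it suffices to prove the Potter bound for slowly varying functions and then multiply back the $x^\rho$ factor, since $f(x)/f(y) = (x/y)^\rho \cdot g(x)/g(y)$.

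Next, for a slowly varying $g$, the Karamata representation theorem gives $g(x) = c(x)\exp\left(\int_{a}^{x} \varepsilon(u)/u \, du\right)$ for $x \geq a$, where $c(x) \to c \in (0,\infty)$ and $\varepsilon(u) \to 0$ as $u \to \infty$. Given $A > 1$ and $\delta > 0$, pick $M_0$ large enough that $|\varepsilon(u)| \leq \delta$ for $u \geq M_0$ and that $c(x)/c(y) \leq A$ for $x, y \geq M_0$. Then for $x \geq y \geq M_0$ we get $g(x)/g(y) = (c(x)/c(y)) \exp\left(\int_y^x \varepsilon(u)/u\, du\right) \leq A \exp\left(\delta \int_y^x du/u\right) = A (x/y)^\delta$, and symmetrically for $y \geq x$ one obtains $g(x)/g(y) \leq A (y/x)^\delta = A(x/y)^{-\delta}$. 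Combining the two cases yields $g(x)/g(y) \leq A \max\{(x/y)^\delta, (x/y)^{-\delta}\}$ for $x, y \geq M_0$. Multiplying by $(x/y)^\rho$ gives exactly \eqref{eq:potter_rv} with $M = M_0$.

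Finally, for the $\RVz$ case I would reduce to the $\RV$ case by the standard change of variables already used in the excerpt (cf.\ \eqref{eq:rv_eq}): if $f \in \RVz$ with index $\rho$, set $\tilde f(x) \coloneqq f(1/x)$ for $x$ large; then $\tilde f(\lambda x)/\tilde f(x) = f(1/(\lambda x))/f(1/x) \to (1/\lambda)^\rho = \lambda^{-\rho}$, so $\tilde f \in \RV$ with index $-\rho$. Applying the already-established \eqref{eq:potter_rv} to $\tilde f$ with the exponent $-\rho$: for $A > 1, \delta > 0$ there is $\widetilde M$ with $\tilde f(x)/\tilde f(y) \leq A \max\{(x/y)^{-\rho-\delta}, (x/y)^{-\rho+\delta}\}$ for $x, y \geq \widetilde M$. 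Substituting $x = 1/t$, $y = 1/s$ and setting $M \coloneqq 1/\widetilde M$, one gets for $t, s \leq M$ that $f(t)/f(s) = \tilde f(1/t)/\tilde f(1/s) \leq A \max\{(s/t)^{-\rho-\delta}, (s/t)^{-\rho+\delta}\} = A\max\{(t/s)^{\rho+\delta}, (t/s)^{\rho-\delta}\}$, which is \eqref{eq:potter}.

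The main obstacle, if one wants a genuinely self-contained argument, is the uniform convergence theorem / Karamata representation for slowly varying functions; these are somewhat delicate real-analysis facts. Since the paper is content to cite \cite{BGT87} for comparable results (e.g.\ \eqref{eq:rv_inv}), the cleanest route is simply to cite \cite[Theorem~1.5.6 (ii)]{BGT87}, where the Potter bounds are stated essentially verbatim, and then record the $\RVz$ version as the immediate consequence of the reciprocal substitution described above. I would present the reduction explicitly but defer the slowly-varying estimate to the cited reference.
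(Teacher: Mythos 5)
Your proposal is correct and follows essentially the same route as the paper: the first half is taken from \cite[Theorem~1.5.6]{BGT87} (your optional Karamata-representation sketch is just that theorem's standard proof), and the $\RVz$ case is obtained by a reciprocal substitution. The only cosmetic difference is that you transform via $\tilde f(x)=f(1/x)$ (index $-\rho$) while the paper uses $\hat f(x)=1/f(1/x)$ (index $\rho$); both reductions yield \eqref{eq:potter} identically.
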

The first half of Lemma~\ref{lem:pbs} is proved in \cite[Theorem~1.5.6]{BGT87}, while the latter half follows from applying the first half to $\hat f$ such that $\hat f(x) = 1/f(1/x)$.

Finally, we also need a similar bound for rapidly varying functions. The following lemma is a consequence of \cite[Lemma~2.2]{BGO06}.

\begin{lemma}
If $f \in \RV_{-\infty}$, then for every $r > 0$ there exists a constant $M$ such that $t \geq M$ implies
\begin{equation}\label{eq:potter_rapid}
f(t) \leq t^{-r}.
\end{equation} 
In particular,  for every $r > 0$ we have
\begin{equation}\label{eq:potter_rapid2}
f(t) = o (t^{-r}) \quad \text{ as } \quad t \to +\infty.
\end{equation} 
\end{lemma}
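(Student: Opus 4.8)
The plan is to reduce the statement to the defining property of rapid variation together with the cited result \cite[Lemma~2.2]{BGO06}. Set $g \coloneqq 1/f$, so that $g \in \RV_{\infty}$ by the definition of $\RV_{-\infty}$ in Definition~\ref{def:rv}. Under this substitution the target inequality $f(t) \le t^{-r}$ is equivalent to $g(t) \ge t^{r}$, so \eqref{eq:potter_rapid} becomes the assertion that, for every $r > 0$, there is an $M$ with $g(t) \ge t^{r}$ for all $t \ge M$.

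The heart of the argument is the fact that a function of rapid variation of index $+\infty$ eventually dominates every power function. This is exactly what \cite[Lemma~2.2]{BGO06} supplies: it gives $\lim_{t \to \infty} g(t)/t^{r} = +\infty$ for each fixed $r > 0$ (equivalently, $t^{r} = o(g(t))$ as $t \to \infty$). From this limit there is an $M$ such that $g(t)/t^{r} \ge 1$ whenever $t \ge M$; rearranging and passing back through $f = 1/g$ yields $f(t) \le t^{-r}$ for all $t \ge M$, which is \eqref{eq:potter_rapid}. For the ``in particular'' claim \eqref{eq:potter_rapid2}, I would simply apply the inequality just proved with the exponent $r + 1$ in place of $r$: there is $M$ with $f(t) \le t^{-(r+1)}$ for $t \ge M$, whence $f(t)/t^{-r} = f(t)\,t^{r} \le t^{-1} \to 0$ as $t \to \infty$, i.e.\ $f(t) = o(t^{-r})$.

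The only real bookkeeping points are: (a) ensuring $f$, hence $g$, is positive on a tail $[a,\infty)$ so that $f = 1/g$ and all the inequalities are meaningful — this is built into Definition~\ref{def:rv} and the convention stated just after it that membership in $\RV_{\infty}$ and $\RV_{-\infty}$ is understood relative to some interval $[a,\infty)$ with $a > 0$; and (b) confirming that the form of \cite[Lemma~2.2]{BGO06} we invoke is precisely ``$g \in \RV_{\infty} \Rightarrow t^{r} = o(g(t))$ for every $r > 0$''. I expect (b) to be the main (and minor) obstacle: if the cited lemma is phrased as an equivalence, or directly for $\RV_{-\infty}$, the reduction above still goes through verbatim, and as a fallback one can prove the domination property from scratch by iterating the defining limit $g(\lambda x)/g(x) \to \infty$ for a fixed $\lambda > 1$ over the dyadic-type scales $\lambda^{n}$.
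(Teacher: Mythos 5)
Your proposal matches the paper's route: the paper offers no separate argument and simply records the lemma as a consequence of \cite[Lemma~2.2]{BGO06}, i.e.\ precisely the power-domination property for rapidly varying functions that you invoke after passing to $g = 1/f \in \RV_{\infty}$, and the passage from \eqref{eq:potter_rapid} to \eqref{eq:potter_rapid2} via the exponent $r+1$ is the intended elementary step. The only caution is that your ``from scratch'' fallback by iterating $g(\lambda x)/g(x)\to\infty$ over dyadic scales is more delicate than stated (without monotonicity of $g$ one needs a uniformity or local-boundedness argument to bound $g$ below on the base interval), but since your main argument rests on the cited lemma this does not affect correctness.
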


\begin{theorem}[Bounds on $(\Phi_{\kappa}^{\spadesuit})^{-1}$]\label{thm:upper}
Let $\Phi$ be a strict consistent error bound function associated to $C_1,\ldots, C_m$ and let 
$C \coloneqq \cap _{i=1}^m C_i$.  Suppose that $C$ is not the whole space and suppose that $\kappa \geq \max\{\dist(0,C), \norm{x^0} \}$ holds for some 
 $x^0 \not \in C$.

Suppose also that $\Phi(\cdot,\kappa)$ is continuous and belongs to $\RVz$ with index $\rho$.
Let $\Psi$ be given by $\Psi(t)\coloneqq-\Phi_{\kappa}^{\spadesuit}(1/t)$.
 Then, the following items hold.
\begin{enumerate}[$(i)$]
	\item $\rho \in [0,1]$.
	\item If $\rho \in (0,1)$, then 
	$\Psi$  belongs to 
	$\RV$ with index $(1/\rho) - 1$. In particular, 
	$\Psi^{-1} \in \RV$, has index $\frac{\rho}{1-\rho}$ and for every $\delta > 0$ such that $\gamma\coloneqq\rho/(1-\rho) - \delta$ is 
	positive, there are constants $M$ and $A$ such that
	\[
	\sqrt{(\Phi_{\kappa}^{\spadesuit})^{-1}(-s)} \leq A \left(\frac{1}{s}\right)^{\gamma/2},\qquad \forall s \geq M.
	\]
	\item If $\rho = 1$,  then the function 
	$\Psi$  belongs to 
	$\RV$ with index $0$.
	In particular, $\Psi^{-1} $ belongs to $\RV_{\infty}$ and for every $r > 0$, we have
	\[
	\sqrt{(\Phi_{\kappa}^{\spadesuit})^{-1}(-s)} = o (s^{-r}) \quad \text{ as } \quad s \to +\infty.
	\]
	
	\item If $\rho = 0$, then  
	$\Psi$ belongs to 
	$\RV_{\infty}$. In particular, $\Psi^{-1}$ belongs to ${\rm RV}$ with index $0$ and 
	for any $r > 0$ we have
	$	s^{-r} = 
	o\left({(\Phi_{\kappa}^{\spadesuit})^{-1}(-s)}\right)$ as $s\to\infty$.
\end{enumerate}
\end{theorem}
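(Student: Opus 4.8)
The plan is to verify each item by tracking the index of regular variation through the three transformations that build $\Psi$ (and then $\Psi^{-1}$) from $\Phi(\cdot,\kappa)$, reusing the machinery assembled in Theorem~\ref{thm_comp} and Proposition~\ref{prop:asym}. First I would observe that the hypotheses on $\kappa$ are exactly those of Proposition~\ref{prop:asym}, so $\Phi_{\kappa}^{\spadesuit}(t)\to-\infty$ as $t\to 0_+$; together with the continuity of $\Phi(\cdot,\kappa)$ this means assumptions $(\ref{a1})$ and $(\ref{a3})$ of Theorem~\ref{thm_comp} are satisfied (with $\widehat\Phi=\Phi$ if needed), and this is what Remark~\ref{rem:as} is alluding to. Hence I get to invoke Theorem~\ref{thm_comp}$(\ref{c1})$ freely whenever $\rho>0$.

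For item $(i)$, I would argue $\rho\le 1$ as follows: since $C\subseteq C_i$ for every $i$, we have $d(x)\le\dist(x,C)\le\Phi(d(x),\kappa)$ for all $x$ in the ball of radius $\kappa$, and by the intermediate value theorem (as in the proof of Proposition~\ref{prop:asym}) $d(\cdot)$ takes all values in a nondegenerate interval $[0,\mu^{1/2}]$ on that ball. Therefore $a\le\Phi(a,\kappa)$ for all small $a>0$, which forces $\lim_{a\to 0_+}a/\Phi(a,\kappa)\le 1$; combined with $\Phi(\cdot,\kappa)\in\RVz$ of index $\rho$ this rules out $\rho>1$ (if $\rho>1$ then $\Phi(a,\kappa)=o(a)$ as $a\to 0_+$ by Potter's bound \eqref{eq:potter}, contradicting $a\le\Phi(a,\kappa)$). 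Non-negativity of $\rho$ is part of the hypothesis that $\Phi(\cdot,\kappa)$ is monotone increasing and regularly varying, so $\rho\in[0,1]$.

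For item $(ii)$, with $\rho\in(0,1)$, Theorem~\ref{thm_comp}$(\ref{c1})$ immediately gives $\Psi\in\RV$ with index $(1/\rho)-1>0$. Since $\Psi$ is continuous, monotone increasing and unbounded (Proposition~\ref{prop_phi} plus $(\ref{a3})$), the inversion rule \eqref{eq:rv_inv} yields $\Psi^{-1}\in\RV$ with index $\rho/(1-\rho)$. Now I would apply the Potter bound \eqref{eq:potter_rv} to $\Psi^{-1}$: for any $\delta>0$ with $\gamma\coloneqq\rho/(1-\rho)-\delta>0$, there are $M,A'$ with $\Psi^{-1}(s)\le A' s^{\rho/(1-\rho)+\delta}$... actually I want the bound in the other direction, so I take $\Psi^{-1}(s)/\Psi^{-1}(M)\le A\max\{(s/M)^{\rho/(1-\rho)-\delta},(s/M)^{\rho/(1-\rho)+\delta}\}$ and, for $s$ large, keep the relevant branch to conclude $\Psi^{-1}(s)\le A'' s^{\gamma}$; wait — I need to be careful about which branch dominates. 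The cleaner route: since $(\Phi_{\kappa}^{\spadesuit})^{-1}(-s)=1/\Psi^{-1}(s)$, and $\Psi^{-1}\in\RV$ with a \emph{positive} index, Potter gives a lower bound $\Psi^{-1}(s)\ge c\, s^{\gamma}$ for $s\ge M$ with $\gamma=\rho/(1-\rho)-\delta>0$, hence $(\Phi_{\kappa}^{\spadesuit})^{-1}(-s)\le c^{-1} s^{-\gamma}$ and taking square roots gives the claimed $A (1/s)^{\gamma/2}$. For item $(iii)$, $\rho=1$: Theorem~\ref{thm_comp}$(\ref{c1})$ (or a direct computation of the limit in \eqref{big_1}) gives $\Psi\in\RV$ with index $0$; since $\Psi$ is unbounded, the second line of \eqref{eq:rv_inv} gives $\Psi^{-1}\in\RV_{\infty}$, so $1/\Psi^{-1}=(\Phi_{\kappa}^{\spadesuit})^{-1}(-\cdot)\in\RV_{-\infty}$, and \eqref{eq:potter_rapid2} gives $(\Phi_{\kappa}^{\spadesuit})^{-1}(-s)=o(s^{-2r})$, whence the square root is $o(s^{-r})$. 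For item $(iv)$, $\rho=0$: here Theorem~\ref{thm_comp}$(\ref{c1})$ does not apply, so I would compute the limit $\lim_{x\to\infty}\Psi(\lambda x)/\Psi(x)$ directly as in \eqref{big_1} using L'Hospital and \eqref{mu_p}-type reasoning; since $\phi_{\kappa,\Phi}$ now has index $0$, $\phi_{\kappa,\Phi}^{-1}\in\RVz$ has index $+\infty$ (i.e.\ is rapidly varying), which makes $\Psi\in\RV_{\infty}$; then \eqref{eq:rv_inv}-type results for rapidly varying functions give $\Psi^{-1}\in\RV$ with index $0$, hence $\Psi^{-1}$ grows slower than any power, so $1/\Psi^{-1}(s)=(\Phi_{\kappa}^{\spadesuit})^{-1}(-s)$ decays slower than any power and $s^{-r}=o((\Phi_{\kappa}^{\spadesuit})^{-1}(-s))$ after accounting for the square root (replacing $r$ by $2r$).

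The main obstacle I anticipate is the $\rho=0$ case of item $(iv)$: the clean inversion rule \eqref{eq:rv_inv} is stated only for index $0$ (giving rapid variation) and positive index, not for the rapidly varying input we have here, so I would need the complementary statement — that the inverse of a function in $\RV_{\infty}$ is slowly varying — which is the content of \cite[Proposition~2.4.4 and Theorem~2.4.7]{BGT87} read in the other direction; I would need to state this carefully. A secondary subtlety across all items is keeping the square roots and the sign conventions straight (the convergence rate involves $\sqrt{(\Phi_{\kappa}^{\spadesuit})^{-1}(\cdot)}$ evaluated at a negative argument), and making sure the $\delta$ in Potter's bound in item $(ii)$ is chosen so that the exponent $\gamma$ stays positive — but these are bookkeeping rather than conceptual difficulties.
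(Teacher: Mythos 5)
Your proposal is correct and follows essentially the same route as the paper's proof: monotonicity plus the intermediate-value argument of Proposition~\ref{prop:asym} and the Potter bound \eqref{eq:potter} for item $(i)$, Theorem~\ref{thm_comp}~$(\ref{c1})$ together with \eqref{eq:rv_inv}, the identity $\Psi^{-1}(s)=1/(\Phi_{\kappa}^{\spadesuit})^{-1}(-s)$ and the Potter bounds \eqref{eq:potter_rv}, \eqref{eq:potter_rapid2} for items $(ii)$ and $(iii)$, and a direct L'Hospital computation showing $\Psi\in\RV_\infty$ for item $(iv)$. The only (harmless) deviation is at the tail of item $(iv)$: you finish via ``$\Psi^{-1}$ is slowly varying, hence grows slower than any power,'' which needs the converse inversion fact you flag, whereas the paper sidesteps it by applying \eqref{eq:potter_rapid2} to $1/\Psi$ to get $t^{r}=o(\Psi(t))$ and then invoking Lemma~\ref{key_lemma} with $g(t)=t^{r}$; also note that the conclusion of item $(iv)$ carries no square root, so your ``replace $r$ by $2r$'' adjustment is unnecessary.
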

\begin{proof}
First, we prove item $(i)$.
For $\lambda > 1$, because $\Phi(\cdot, \kappa)$ is monotone, we have $\Phi(\lambda t,\kappa) \geq \Phi(t,\kappa)$. Therefore, $\lambda^\rho = \lim _{t\to 0_+} \Phi(\lambda t,\kappa)/\Phi(t,\kappa)\geq 1$, which shows that $\rho \geq 0$.

Next, let $d(x) \coloneqq \max _{1\leq i \leq m} \dist(x,C_i)$.
Since $C \subseteq C_i$ for all $i$, we have
\begin{equation}\label{eq:phi_aux}
d(x) \leq \dist(x, C) \leq \Phi(d(x),\kappa),
\end{equation}
whenever $\norm{x} \leq \kappa$. By assumption, 
$d(x^0) > 0$ and the projection $P_C(0)$ of $0$ onto 
to $C$ satisfies $\norm{P_C(0)} \leq \kappa$.
By continuity, $d(\cdot)$, 
 assumes every value between $0$ and 
$d(x^0)$ over the ball $\{x \mid \norm{x}\leq \kappa \}$. In view of \eqref{eq:phi_aux}, 
we conclude that for sufficiently small $t$ we have
\begin{equation}\label{eq:phi}
t \leq \Phi(t,\kappa).
\end{equation}
For the sake of obtaining a contradiction, suppose that $\rho > 1$ and let $\delta > 0$ be such that $\rho -\delta > 1$.   By using Potter bound \eqref{eq:potter} for $A = 2$, we conclude that  for sufficiently small $t,s$, we have 
\[
\Phi(t,\kappa) \leq 2\Phi(s,\kappa)\max\left \{\left(\frac{t}{s}\right)^{\rho -\delta}, \left(\frac{t}{s}\right)^{\rho +\delta}  \right\}.
\]
Combining with \eqref{eq:phi}, we obtain
\[
1 \leq \frac{\Phi(t,\kappa)}{t} \leq 2\Phi(s,\kappa)\max \{{t}^{\rho -\delta-1} {(1/s)}^{\rho -\delta},\quad {t}^{\rho + \delta-1} {(1/s)}^{\rho +\delta} \}.
\]
If we fix $s$ and let $t$ go to $0$, the right-hand side converges to $0$ (because $\rho-\delta -1 > 0$), which leads to a contradiction. So, indeed it must be the case that $\rho \in [0,1]$.

Next, we move on to item $(ii)$. 
From item~\eqref{c1} of Theorem~\ref{thm_comp},  $\Psi$ belongs to $\RV$ with index $(1/\rho) - 1$. By \eqref{eq:rv_inv}, 
$\Psi^{-1}$ has index $\rho/(1-\rho)$.
We also have $\Psi^{-1}(s) = 1/(\Phi_{\kappa}^{\spadesuit})^{-1}(-s)$.
Then, we apply Potter bound \eqref{eq:potter_rv} to $\Psi^{-1}$ with $x$, $y$ replaced by $t$ and $s$, respectively. Fixing $t$, taking square roots and recalling that $(t/s)^{b} \leq (t/s)^{a}$ if $0\leq a \leq b$ and $s \geq t$, leads to the final conclusion of item $(ii)$.

Now, we check item $(iii)$.
Again,  from item~\eqref{c1} of Theorem~\ref{thm_comp},  $\Psi$ belongs to $\RV$ with index $0$.
By Proposition~\ref{prop:asym}, $\Psi(t) \to +\infty$ as 
$t \to \infty$. Under these conditions, it is known that 
$\Psi^{-1}$ belongs to $\RV_{\infty}$, see \eqref{eq:rv_inv}.
Therefore, $1/\Psi^{-1}(s) = (\Phi_{\kappa}^{\spadesuit})^{-1}(-s)$ belongs to 
$\RV_{-\infty}$. Applying \eqref{eq:potter_rapid2} to $(\Phi_{\kappa}^{\spadesuit})^{-1}(-s)$ and taking square roots leads to the final conclusion of item~$(iii)$.

Finally, we prove item $(iv)$. First, we see from $\rho = 0$ that for any $\lambda >0$,
\begin{equation*}
\lim_{x\to 0+}\frac{\phi_{\kappa,\Phi}(\lambda x)}{\phi_{\kappa,\Phi}(x)} = \lim_{x\to 0_+}\Big(\frac{\Phi(\sqrt{\lambda x},\,\kappa)}{\Phi(\sqrt{x},\,\kappa)}\Big)^2 = \lim_{x\to 0_+}\Big(\frac{\Phi(\sqrt{\lambda}x,\,\kappa)}{\Phi(x,\,\kappa)} \Big)^2= 1.
\end{equation*}
Let $w(x):=\frac{1}{\phi_{\kappa,\Phi}(1/x)}$. We then have
\begin{equation*}
\lim_{x\to\infty}\frac{w(\lambda x)}{w(x)} = \lim_{x\to\infty}\frac{1/\phi_{\kappa,\Phi}(1/(\lambda x))}{1/\phi_{\kappa,\Phi}(1/x)} = \lim_{s\to 0_+}\frac{\phi_{\kappa,\Phi}(\lambda s)}{\phi_{\kappa,\Phi}(s)} = 1,
\end{equation*}
which implies that $w\in {\rm RV}$ with index $0$. Since $w(x) \to + \infty$ as $x \to +\infty$, 
again by \eqref{eq:rv_inv}, we see that $w^{-1}(x) = \frac{1}{\phi^{-1}_{\kappa,\Phi}(1/x)} \in {\rm RV}_{\infty}$. Note that $\Psi(t):= -\Phi_{\kappa}^{\spadesuit}(1/t)$. We use L'Hospital's rule and further have
\begin{equation*}
\begin{split}
\lim_{t\to\infty}\frac{\Psi(\lambda t)}{\Psi(t)} & = \lim_{t\to\infty}\frac{\Phi_{\kappa}^{\spadesuit}(1/(\lambda t))}{\Phi_{\kappa}^{\spadesuit}(1/t)} =\lim_{s\to 0_+}\frac{\Phi_{\kappa}^{\spadesuit}(s/\lambda)}{\Phi_{\kappa}^{\spadesuit}(s)} = \lim_{s\to 0_+}\frac{\frac1{\lambda}/\phi_{\kappa,\Phi}^{-1}(s/\lambda)}{1/\phi_{\kappa,\Phi}^{-1}(s)}\\ 
& = \lim_{s\to 0_+}\frac{\phi^{-1}_{\kappa,\Phi}(\lambda s)}{\lambda\phi^{-1}_{\kappa,\Phi}(s)} = \lim_{x\to\infty}\frac{1/w^{-1}(x/\lambda)}{\lambda/w^{-1}(x)} = \lim_{x\to\infty}\frac{w^{-1}(\lambda x)}{\lambda w^{-1}(x)},
\end{split}
\end{equation*}
which implies that $\Psi\in{\rm RV}_{\infty}$ and thus $1/\Psi(t) = -1/\Phi_{\kappa}^{\spadesuit}(1/t)\in{\rm RV}_{-\infty}$. Now, from \eqref{eq:potter_rapid}, it follows that  $1/\Psi(t)$ goes to $0$ as $t \to \infty$. 
From \eqref{eq:potter_rapid2},
$1/\Psi(t) = o(t^{-r})$ as $t \to +\infty$. Therefore, $t^r = o(\Psi(t))$ as $t\to\infty$. 
Finally, since $g(t) := t^{r} \in \RV$, from  Lemma~\ref{key_lemma} we obtain 
\begin{equation*}
\frac{s^{-1/r}}{(\Phi_{\kappa}^{\spadesuit})^{-1}(-s)} = \frac{1/(\Phi_{\kappa}^{\spadesuit})^{-1}(-s)}{s^{1/r}} \to 0\ \ \ {\rm as} \ \ \  s\to\infty.
\end{equation*}
This completes the proof.
\qed\end{proof}
Theorem~\ref{thm:upper} has the following informal consequence: any consistent error bound function that corresponds to an $\RVz$ function of index $\rho \in (0,1]$ behaves almost the same as a H\"olderian error bound with exponent $\rho$. In particular, in view of our convergence results (see, for example, \eqref{case_pg1}), items $(ii)$ and $(iii)$ imply that the corresponding convergence rate would be at least as fast as the convergence rate afforded by any H\"olderian error bound with exponent $\rho' < \rho$.

\subsection{Logarithmic error bounds}\label{sec:log}
In Theorem~\ref{thm:upper}, if $\rho  = 0$, only a 
\emph{lower bound} to $(\Phi_{\kappa}^{\spadesuit})^{-1}$ is obtained. 
 Because $(\Phi_{\kappa}^{\spadesuit})^{-1}$ can be used to \emph{upper bound} the convergence rate (see Theorem~\ref{thm_conv}), a lower bound to $(\Phi_{\kappa}^{\spadesuit})^{-1}$ can not be used in general to draw conclusions about the convergence rates of the algorithms discussed in Section~\ref{sec:proj}.
In view of this limitation, it would be useful to get reasonable upper bounds to $(\Phi_{\kappa}^{\spadesuit})^{-1}$ as well when $\rho = 0$.

A challenge in this task is that the class of $\RV$ functions  with index $\rho = 0$  contains functions with 
very slow growth. Indeed, these are called 
\emph{slowly varying functions} in the regular variation literature.
For example, $(\ln(x))^\alpha$ (for any nonzero $\alpha$) and arbitrary compositions of logarithms $\ln(\ln(\cdots \ln(x)))$ belong to $\RV$ with index $0$ (see \cite[Section~1.3.3]{BGT87}).
Because of that, asymptotic upper bounds that are valid for any slowly varying function are doomed to not be very informative.

In order to get meaningful bounds in the case $\rho = 0$ we need to further restrict the class of functions under consideration as follows. 
\begin{definition}[Logarithmic error bound]\label{def:log}
An error bound function $\Phi$ is said to be \emph{logarithmic with exponent $\gamma$} if for every $ b > 0$, there exist
$\kappa_b > 0$ and $a_b > 0$ such that $\Phi(a,b) = \kappa_b\left(-\frac{1}{\ln(a)}\right)^{\gamma} $ holds 
for $a \in (0,a_b)$.
\end{definition}


Next, we show an example of logarithmic error bound. Another instance will be discussed in Section~\ref{sec:exp} in the context of the analysis of 
the exponential cone.

\begin{example}[Example of logarithmic error bound in arbitrary dimension]\label{ex:log2}
We start with the analysis of some functions that will be helpful to build our example.
For every $\gamma \geq 2$, we define $\tilde f_{\gamma} :\R \to \R_+$ such that ${\tilde f}_{\gamma}(0) = 0$ and
\[
{\tilde f}_{\gamma}(t) \coloneqq e^{- \frac{1}{|t|^\gamma} }, \qquad  \forall t \neq 0.
\]
The case $\gamma = 2$ corresponds to a function described in, e.g., \cite[page~453]{ABRS10}.	
We note that  ${\tilde f}_{\gamma}''$ is nonnegative in a neighbourhood of $0$. Then, because a convex function is locally Lipschitz on the relative interior of its domain, we  can select $t_{\gamma} > 0$ such that ${\tilde f}_{\gamma}$ restricted to $[-t_{\gamma},t_\gamma]$ is convex and Lipschitz continuous with constant $L_{\gamma}$. 
 Finally, let $f_{\gamma}$ be the infimal convolution between  ${\tilde f}_{\gamma}$ restricted to $[-t_{\gamma},t_\gamma]$ and $L_{\gamma}|\cdot|$:
\begin{equation}\label{eq:inf_conv}
f_{\gamma}(t) \coloneqq \inf_{u \in [-t_{\gamma},t_\gamma] } {\tilde f}_{\gamma}(u) + L_{\gamma}|t-u|.
\end{equation}
With that $f_{\gamma}$ is  a convex function which is finite over $\R$ and satisfies 
$f_{\gamma}(t) = {\tilde f}_{\gamma}(t)$ for $t \in  [-t_{\gamma},t_\gamma]$. Since $f_{\gamma}$ has an unique minimum at $t = 0$ and is convex, $f_{\gamma}$ is monotone increasing when 
restricted to $[0,\infty)$. Taking $u = 0$ in \eqref{eq:inf_conv} we obtain
\begin{equation}\label{eq:f_gamma1}
f_{\gamma}(t) \leq L_{\gamma} |t|, \qquad \forall t \in \R.
\end{equation}

Let $\varphi_\gamma$ be the inverse of the restriction of $f_{\gamma}$ to 
$[0,\infty)$. 
Since $f_{\gamma}(t) \rightarrow \infty$ as 
$t \to \infty$,  $\varphi_\gamma$ is well-defined over $[0,\infty)$. Because $f_{\gamma}(t) = f_{\gamma}(-t)$, we also have
\begin{equation}\label{eq:log_inv}
\varphi_\gamma(f_\gamma(t)) = |t|, \qquad \forall t \in \R.
\end{equation}
Furthermore, $\varphi_\gamma$ is monotone increasing and for $t \in (0,f_{\gamma}(t_\gamma)]$, 
$\varphi_\gamma$ coincides with the inverse of $\tilde f_{\gamma}$, so we have
\begin{equation}\label{eq:log_inv2}
\varphi_\gamma(t) = \left(-\frac{1}{\ln(t)}\right)^{1/\gamma}.
\end{equation}
Also, \eqref{eq:f_gamma1} implies that $f_{\gamma}(t/L_{\gamma}) \leq t$ for $t \geq 0$, therefore,
\begin{equation}\label{eq:log_inv3}
t \leq L_{\gamma}\varphi_{\gamma}(t), \qquad \forall t \geq 0.
\end{equation}
Because $f_{\gamma}$ is convex and $\varphi_{\gamma}$ is monotone increasing, $\varphi_{\gamma}$ must be concave. 
Combined with the fact that $\varphi_{\gamma}(0) = 0$, we have that 
\begin{equation}\label{eq:log_inv4}
\varphi_{\gamma}((1+\lambda)t) \leq (1+\lambda) \varphi_{\gamma}(t),\qquad \forall \lambda,t \geq 0.  
\end{equation}

Next, we define
\[
C_{1} \coloneqq \{(x,\mu) \in \R^n\times \R \mid \mu \geq f_\gamma(\norm{x}) \}, \qquad C_2 \coloneqq \{(x,0) \in \R^n\times \R \}.
\]
We have $C \coloneq C_1 \cap C_2 = \{(0,0)\}$ and we shall check several things about this example.
For the sake of obtaining a contradiction, suppose that  a H\"olderian error bound holds in a neighbourhood of $(0,0)$. Then, by considering points of the form $(x_t,0)\coloneqq (t,0,\ldots, 0)$ with $t \in \R_{++}$, there exist $k > 0$ and an exponent $\alpha \in (0,1]$ such that
\begin{equation*}\label{eq:hold}
t = \dist((x_t,0), C ) \leq k \dist((x_t,0),C_1)^\alpha
\leq k \norm{(t,0,\cdots,0) - (t,0,\cdots, f_{\gamma}(t)) }^\alpha = k f_{\gamma}(t)^\alpha
\end{equation*}
holds for all sufficiently small $t$. However, this is 
impossible because $t/ f_{\gamma}(t)^\alpha$ goes to $\infty$ as $t \to 0_+$.
The conclusion is that no H\"olderian error bound holds.

Next, we check that $C_1$ and $C_2$  admit a logarithmic error bound with exponent $1/\gamma$.
We recall the following properties of orthogonal projections: if $U,V \subseteq \R^n$ are closed convex sets and $z \in \R^n$, then 
\begin{align}
\dist(z, U) &\leq \dist(z, V) + \dist(P_V(z), U) \label{eq:proj1},\\
\dist(P_V(z),U) &\leq \dist(z,V) + \dist(z, U) \label{eq:proj2}.
\end{align}
Let $b > 0$ and let $(x,\mu)$ be such that $\norm{(x,\mu)} \leq b$. From \eqref{eq:proj1} we have:
\begin{equation}\label{eq:ex2_log3}
\norm{(x,\mu)}=\dist((x,\mu), C_1 \cap C_2) \leq \dist((x,\mu),C_2) + \dist((x,0), C_1 \cap C_2).
\end{equation}
Let $(\bar{x},f_{\gamma}({\norm{\bar{x}}}))$ be the orthogonal projection of $(x,0)$ to $C_1$. 
Since $f_{\gamma}$ is convex and finite everywhere, its restriction to any bounded interval of $\R$ is Lipschitz continuous, e.g., \cite[Theorem~10.4]{rockafellar}. Let $L$ be the Lipschitz constant of $f_{\gamma}$ restricted to the inverval $[-b,b]$.
As projections are nonexpansive and $(0,0) \in C_1$, we have $\norm{(\bar{x},f_{\gamma}(\bar{x}))}\leq \norm{x}$ which implies that $\norm{\bar{x}} \leq \norm{x}\leq b$.
Then
\begin{equation}\label{eq:ex2_log}
f_\gamma(\norm{x}) - f_\gamma(\norm{\bar{x}}) \leq |f_\gamma(\norm{x}) - f_\gamma(\norm{\bar{x}})| \leq L|\norm{x}-\norm{\bar{x}}| \leq L\norm{x-\bar{x}}.
\end{equation}
Letting $\hat L \coloneqq \max\{L,1\}$, from \eqref{eq:ex2_log} we obtain
\begin{equation}\label{eq:ex2_log5}
f_\gamma(\norm{x}) \leq \hat L \left(\norm{x-\bar{x}}+ f_\gamma(\norm{\bar{x}})\right) \leq 
\hat L\sqrt{2}\sqrt{f_\gamma(\norm{\bar{x}})^2 + \norm{x-\bar{x}}^2}.
\end{equation}
Since $\dist((x,0),C_1) = \sqrt{f_\gamma(\norm{\bar{x}})^2 + \norm{x-\bar{x}}^2}$,
from \eqref{eq:ex2_log5} we see that there exists a constant $\tilde L > 0$ such that 
\begin{equation}\label{eq:ex2_log2}
f_\gamma(\norm{x}) \leq \tilde{L}\dist((x,0),C_1).
\end{equation}
Because $\varphi_\gamma$ is monotone increasing, we 
can apply $\varphi_\gamma$ at both sides of \eqref{eq:ex2_log2} and, recalling \eqref{eq:log_inv}, we obtain $\norm{x} \leq \varphi_\gamma(\tilde{L}\dist((x,0),C_1))$.
Since $\norm{x} = \dist((x,0),C_1\cap C_2)$, from 
\eqref{eq:ex2_log3} we obtain
\begin{equation}\label{eq:ex2_log4}
\dist((x,\mu), C_1 \cap C_2) \leq \dist((x,\mu),C_2) + \varphi_\gamma(\tilde{L}\dist((x,0),C_1)).
\end{equation}
Now, let $d(x,\mu)$ be the maximum between 
$\dist((x,\mu),C_2)$ and $\dist((x,\mu),C_1)$.
From \eqref{eq:proj2}, we
obtain $\dist((x,0),C_1) \leq \dist((x,\mu),C_1) + \dist((x,\mu),C_2)$. 
We can use this together with \eqref{eq:log_inv3} and \eqref{eq:log_inv4} to obtain an upper bound to the right-hand-side of \eqref{eq:ex2_log4} thus concluding that there exists $\rho(b) > 0$ such that
\begin{equation}\label{eq:log2_ex}
\dist((x,\mu), C_1 \cap C_2) \leq \rho(b)\varphi_\gamma(d(x,\mu))
\end{equation}
holds for all $(x,\mu)$ with $\norm{(x,\mu)} \leq b$. 
Since increasing $\rho(b)$ still leads to a valid upper bound in \eqref{eq:log2_ex} we may select $\rho(b)$ in such a way that $\rho(\cdot)$ is a monotone nondecreasing function of $b$.  So, $\Phi$ given by 
$\Phi(a,b) \coloneqq \rho(b)\varphi_\gamma(a) $ is a strict consistent error bound function. It is also logarithmic with exponent $1/\gamma$ because of 
\eqref{eq:log_inv2}.
\end{example}

If $\Phi$ is as in Definition~\ref{def:log}, then $\Phi(\cdot,b)$ is an $\RVz$ function of index $0$ for every $b > 0$. Then, the function $\Psi$ in  Theorem~\ref{thm:upper} is  rapidly varying and  $(\Phi_{\kappa}^{\spadesuit})^{-1}$ is again an $\RVz$ function of index $0$. The fact that the index is $0$  precludes the usage of Potter bounds to obtain an asymptotic  upper bound to $(\Phi_{\kappa}^{\spadesuit})^{-1}$.
In addition, neither $\Psi$ nor $(\Phi_{\kappa}^{\spadesuit})^{-1}$ seem to have simple closed form expressions, so evaluating them directly is non-trivial.
However,  we can show that applying a logarithm is enough to ``de-accelerate'' $\Psi$ down to a regular varying function with positive index $\rho$. Better still, we will argue that $\ln \Psi$ is \emph{asymptotically equivalent} to a function for which we can directly compute the inverse. Here, we say that $f$ and $g$ are \emph{asymptotically equivalent at $\infty$} if 
\[
\lim _{t\to \infty} \frac{f(t)}{g(t)} = 1.
\]
In this case, we write \emph{$f(t)\sim g(t)$, as $t \to \infty$}. The following lemma is the first step towards implementing the strategy just outlined.

\begin{lemma}\label{lem_approx}
Let $f: [a,\,\infty)\to(0,\,\infty)\in\RV$ ($a > 0$) with index $\rho > 0$. Then we have 
\begin{equation*}
	g(t):=\ln\int_{a}^te^{f(x)}dx \quad \sim \quad  f(t),\ \ {\rm as}\ \ t \to\infty.
	\end{equation*}
\end{lemma}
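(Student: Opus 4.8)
The plan is to sandwich $g(t) = \ln\int_a^t e^{f(x)}\,dx$ between two quantities that are both asymptotically equivalent to $f(t)$. The lower bound is the easy direction: since $f$ is monotone increasing (being regularly varying with positive index — or at least eventually monotone, which we may assume by restricting the interval), for $t$ large we have $\int_a^t e^{f(x)}\,dx \geq \int_{t-1}^t e^{f(x)}\,dx \geq e^{f(t-1)}$, so $g(t) \geq f(t-1)$. By the uniform convergence theorem for regularly varying functions (or directly from $\lim_{t\to\infty} f(t-1)/f(t) = \lim f((1 - 1/t)t)/f(t) = 1$ since $1-1/t \to 1$ and regular variation gives locally uniform convergence), we get $\liminf_{t\to\infty} g(t)/f(t) \geq 1$.

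For the upper bound, the idea is to split the integral at a point slightly below $t$, say at $t - h(t)$ for a suitable slowly-growing $h$, or more simply to use that $\int_a^t e^{f(x)}\,dx \leq (t-a)e^{f(t)}$, giving $g(t) \leq f(t) + \ln(t-a)$. Then it suffices to show $\ln(t-a) = o(f(t))$, i.e. that $f$ dominates every logarithm. This is a standard consequence of $f \in \RV$ with index $\rho > 0$: by Potter's bounds (Lemma~\ref{lem:pbs}, equation \eqref{eq:potter_rv}), for any $\delta$ with $0 < \delta < \rho$ there are constants $A, M$ with $f(t)/f(M) \geq A^{-1}(t/M)^{\rho - \delta}$ for $t \geq M$, so $f(t)$ grows at least like a positive power of $t$, whence $\ln t = o(f(t))$. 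Combining, $\limsup_{t\to\infty} g(t)/f(t) \leq 1$.

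Putting the two bounds together yields $\lim_{t\to\infty} g(t)/f(t) = 1$, which is the claim. The main obstacle — really the only point requiring care — is making sure the elementary estimates $\int_{t-1}^t e^{f(x)}\,dx \geq e^{f(t-1)}$ and $f(t-1)\sim f(t)$ are justified without assuming more regularity of $f$ than "$\in \RV$ with index $\rho > 0$": monotonicity of $f$ may only hold eventually, and one should invoke the representation theorem / uniform convergence theorem for regularly varying functions (e.g. \cite[Theorem~1.5.2]{BGT87}) rather than argue pointwise. Everything else is routine. An alternative for the upper bound that avoids the crude factor $t-a$ entirely is to write, for fixed $\epsilon \in (0,1)$, $\int_a^t = \int_a^{\epsilon t} + \int_{\epsilon t}^t \leq \epsilon t\, e^{f(\epsilon t)} + t\, e^{f(t)}$ and observe $f(\epsilon t)/f(t) \to \epsilon^\rho < 1$, so the first term is negligible; but the simple bound $g(t) \leq f(t) + \ln(t-a)$ together with $\ln t = o(f(t))$ is cleaner and sufficient.
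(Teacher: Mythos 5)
Your argument is correct in outline and genuinely different from the paper's proof, which is essentially a proof by citation: there, $f$ is extended to $[0,\infty)$ by the constant value $f(a)$, the Abelian theorem \cite[Theorem~4.12.10~(ii)]{BGT87} is invoked to give $\ln\int_0^t e^{f(x)}dx\sim f(t)$, and the remainder of the proof only checks that moving the lower limit of integration from $0$ back to $a$ does not disturb the equivalence (using that $\int_0^t e^{f}\to\infty$). Your sandwich argument replaces the appeal to that Abelian theorem by elementary integral estimates combined with Potter's bounds and the uniform convergence theorem, and it avoids the endpoint bookkeeping entirely; what it buys is a self-contained proof whose only inputs are facts already quoted in Section~\ref{sec:rv}, at the cost of a little more care with regularity. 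The one point you must actually repair (and which you yourself flag) is the use of monotonicity: $f\in\RV$ with index $\rho>0$ does not imply that $f$ is monotone, even eventually --- it is only asymptotically equivalent to a monotone function --- so the estimates $\int_{t-1}^t e^{f}\ge e^{f(t-1)}$ and $\int_a^t e^{f}\le (t-a)e^{f(t)}$ should be written with $\inf_{[t-1,t]}f$ and $\sup_{[a,t]}f$ in place of $f(t-1)$ and $f(t)$; the uniform convergence theorem gives $\inf_{[t-1,t]}f\sim f(t)$, while $\sup_{[X,t]}f\sim f(t)$ for $\rho>0$ is a standard consequence of the uniform convergence theorem and Potter's bounds (cf.\ \cite[Section~1.5]{BGT87}), and the contribution of the fixed interval $[a,X]$ is a constant, negligible inside the logarithm since the integral diverges. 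With that adjustment your two bounds, together with $\ln t=o(f(t))$ (which indeed follows from Potter's bounds because $\rho>0$), yield $g(t)\sim f(t)$ as claimed; note also that in the paper's only application of the lemma $f$ is continuous and eventually increasing, so either route suffices there.
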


\begin{proof}
This result	is a direct consequence of one of the many  \emph{Abelian theorems} discussed in \cite[Chapter~4]{BGT87}. In this context, an Abelian theorem is a result that relates the asympotic properties of a function $f$ to some transform of $f$.

First, we extend the domain of $f$ to $[0,\,\infty)$ by setting $f(x) = f(a)$ for all $x\in[0,\,a)$. Invoking \cite[Theorem~4.12.10~(ii)]{BGT87}, we then have 
\begin{equation}\label{h_equi}
	h(t):=\ln\int_{0}^te^{f(x)}d\,x\sim f(t) \ \ {\rm as}\ \ t\to\infty.
\end{equation}
The proof is now essentially complete because changing the starting point of the integral in \eqref{h_equi} does not influence the 
asymptotic equivalence.  Nevertheless, we will provide a formal justification for this.

To simplify the notation, we let $F(t):=\int_{0}^te^{f(x)}d\,x$ and $b:=  \int_{0}^ae^{f(x)}d\,x$. Therefore, we can rewrite $g$ as
\begin{equation}\label{g_expre}
g(t) = \ln\int_{a}^te^{f(x)}d\,x = \ln \left(\int_{0}^te^{f(x)}d\,x - \int_{0}^ae^{f(x)}d\,x\right) = \ln \left(F(t) - b\right).
\end{equation}
Because $f$ has positive index of regular variation, $f(t) \to \infty$ as $t \to \infty$, which is a consequence of Potter bounds by selecting $\delta = \rho/2$, fixing $x$ and letting $y$ go to infinity in \eqref{eq:potter_rv}, see also \cite[Proposition~1.5.1]{BGT87}.
This implies that $F(t)\to\infty$ as $t\to\infty$ as well.  Using this, \eqref{h_equi} and \eqref{g_expre}, we  obtain
\begin{equation*}
\begin{split}
\lim_{t\to\infty}\frac{g(t)}{f(t)}  & = \lim_{t\to\infty}\frac{g(t)}{h(t)}\frac{h(t)}{f(t)} = \lim_{t\to\infty}\frac{\ln \left(F(t) - b\right)}{\ln(F(t))}\frac{f(t) + o(f(t))}{f(t)}\\
& = 1 + \lim_{t\to\infty}\frac{\ln \left(F(t) - b\right) - \ln(F(t))}{\ln\left(F(t)\right)} 
= 1 + \lim_{t\to\infty}\frac{\ln \left(1 - b/F(t)\right) }{\ln\left(F(t)\right)} = 1.
\end{split}
\end{equation*}
This completes the proof.
\qed
\end{proof}
Next, we need a counterpart of Lemma~\ref{key_lemma} for 
asymptotic equivalence.
\begin{lemma}\label{lemma:inv}
Assume that $f,\,g: [a,\,\infty)\rightarrow (0,\,\infty)\, (a > 0)$ are continuous monotone increasing unbounded functions, and $f\in \RV$ or $g\in \RV$ with positive index. If $f(x) \sim g(x)$ as $x\rightarrow\infty$, then $f^{-1}(x) \sim g^{-1}(x)$ as $x\rightarrow\infty$.
\end{lemma}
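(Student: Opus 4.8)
The plan is to mirror the proof strategy of Lemma~\ref{key_lemma}: reduce the statement about inverses to a statement about the original functions by composing with a suitable monotone auxiliary function and using regular variation. Without loss of generality, assume $g\in\RV$ with positive index $\rho$ (the roles of $f$ and $g$ are symmetric under $f(x)\sim g(x)$, and if $f\in\RV$ then, since $f\sim g$ and regular variation is preserved under asymptotic equivalence, $g\in\RV$ with the same index). Since $g$ is continuous, monotone increasing, and unbounded, $g^{-1}$ is well-defined on $[g(a),\infty)$, continuous and monotone increasing, and by \eqref{eq:rv_inv} $g^{-1}\in\RV$ with index $1/\rho > 0$; in particular $g^{-1}$ is unbounded. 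The same holds for $f^{-1}$ once we know $f\sim g$ forces $f\in\RV$ (again by preservation of regular variation under asymptotic equivalence).

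\begin{proof}
Since asymptotic equivalence preserves membership in $\RV$ and the index, we may assume $g\in\RV$ with index $\rho>0$; then $f\in\RV$ with index $\rho$ as well. Both $f$ and $g$ are continuous, monotone increasing and unbounded, hence admit continuous monotone increasing unbounded inverses $f^{-1}$, $g^{-1}$ defined for large arguments, and by \eqref{eq:rv_inv} we have $f^{-1},g^{-1}\in\RV$ with index $1/\rho>0$.

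Fix $\varepsilon\in(0,1)$. Because $f(x)\sim g(x)$ as $x\to\infty$, there is $x_0\ge a$ such that
\[
(1-\varepsilon)\,g(x)\le f(x)\le (1+\varepsilon)\,g(x),\qquad \forall\,x\ge x_0.
\]
Let $y$ be large enough that $g^{-1}(y)\ge x_0$ and set $x\coloneqq g^{-1}(y)$, so $g(x)=y$. From the right-hand inequality, $f(x)\le(1+\varepsilon)y$, and since $f^{-1}$ is monotone increasing, applying $f^{-1}$ gives $x\le f^{-1}\bigl((1+\varepsilon)y\bigr)$, i.e.
\[
g^{-1}(y)\le f^{-1}\bigl((1+\varepsilon)y\bigr),\qquad \text{for all large }y.
\]
Symmetrically, from the left-hand inequality $f(x)\ge(1-\varepsilon)y$, so $x\ge f^{-1}\bigl((1-\varepsilon)y\bigr)$, giving $g^{-1}(y)\ge f^{-1}\bigl((1-\varepsilon)y\bigr)$ for all large $y$. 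Therefore
\[
\frac{f^{-1}\bigl((1-\varepsilon)y\bigr)}{f^{-1}(y)}\le \frac{g^{-1}(y)}{f^{-1}(y)}\le \frac{f^{-1}\bigl((1+\varepsilon)y\bigr)}{f^{-1}(y)},\qquad\text{for all large }y.
\]
Now let $y\to\infty$. Since $f^{-1}\in\RV$ with index $1/\rho$, the left-most ratio tends to $(1-\varepsilon)^{1/\rho}$ and the right-most ratio tends to $(1+\varepsilon)^{1/\rho}$. Hence
\[
(1-\varepsilon)^{1/\rho}\le \liminf_{y\to\infty}\frac{g^{-1}(y)}{f^{-1}(y)}\le \limsup_{y\to\infty}\frac{g^{-1}(y)}{f^{-1}(y)}\le (1+\varepsilon)^{1/\rho}.
\]
Since $\varepsilon\in(0,1)$ was arbitrary and $(1\pm\varepsilon)^{1/\rho}\to 1$ as $\varepsilon\to 0$, we conclude $\lim_{y\to\infty} g^{-1}(y)/f^{-1}(y)=1$, that is, $f^{-1}(x)\sim g^{-1}(x)$ as $x\to\infty$.
\end{proof}

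The routine parts are the manipulation of the two-sided inequality and the monotonicity of the inverses; the only place that genuinely uses the hypotheses is the last limit, where the definition of regular variation of $f^{-1}$ (with index $1/\rho>0$, obtained from \eqref{eq:rv_inv}) converts the perturbations $(1\pm\varepsilon)y$ into multiplicative factors $(1\pm\varepsilon)^{1/\rho}$ that can be squeezed to $1$. I expect the main subtlety — though still minor — to be making sure the regular variation hypothesis is correctly transported to the inverse: this needs $\rho>0$ (so the index $1/\rho$ of the inverse is finite and positive) together with continuity, monotonicity and unboundedness of the original function, all of which are in the statement; the asymptotic-equivalence-preserves-$\RV$ remark handles the case where the $\RV$ assumption is placed on $f$ rather than $g$.
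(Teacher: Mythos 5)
Your proof is correct. Every step checks out: the transfer of regular variation across asymptotic equivalence is valid (write $f(\lambda x)/f(x)$ as a product of three ratios), the identity $f^{-1}\circ f=\mathrm{id}$ is justified by continuity and strict monotonicity, the sandwich $f^{-1}\bigl((1-\varepsilon)y\bigr)\le g^{-1}(y)\le f^{-1}\bigl((1+\varepsilon)y\bigr)$ for large $y$ follows correctly from applying the monotone $f^{-1}$ to $(1-\varepsilon)g(x)\le f(x)\le(1+\varepsilon)g(x)$ at $x=g^{-1}(y)$, and the final squeeze uses exactly the defining limit of regular variation of $f^{-1}$ with index $1/\rho>0$ (obtained from \eqref{eq:rv_inv} together with $f^{\leftarrow}=f^{-1}$ under continuity and strict monotonicity). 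Your route is, however, genuinely different from the paper's: the paper does not prove the statement from scratch but simply observes that continuity and strict monotonicity make the generalized inverses $f^{\leftarrow},g^{\leftarrow}$ coincide with $f^{-1},g^{-1}$ and then cites the inverse-equivalence result from the regular-variation literature (\cite[p190, Exercise~14]{BGT87}, see also \cite[Theorem~A]{dt07}), exactly as it did for Lemma~\ref{key_lemma}. What your argument buys is self-containedness — it effectively reproves the needed special case of that cited exercise with only \eqref{eq:rv_inv} and the definition of $\RV$, making the dependence on $\rho>0$ transparent (it is what keeps the factors $(1\pm\varepsilon)^{1/\rho}$ finite and squeezable to $1$); what the paper's citation buys is brevity and a statement valid for generalized inverses without the continuity hypothesis.
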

\begin{proof}
Under the hypothesis that $f$ and $g$ are continuous and monotone increasing, we have $f^{\leftarrow} = f^{-1}$ and $g^{\leftarrow} = g^{-1}$.	
So the lemma follows from \cite[p190, Exercise~14, items $(ii)$ and $(iii)$]{BGT87}, see 
also \cite[Theorem~A]{dt07} and the surrounding discussion. \qed 
\end{proof}

We are ready to present our main result in this subsection. In the following theorem, we provide a tight estimate for 
the $(\Phi_{\kappa}^{\spadesuit})^{-1}$ function in the case of a logarithmic error bound. In view of Theorem~\ref{thm_conv} this gives a worst-case convergence rate for several algorithms when the underlying error bound is logarithmic. 

\begin{theorem}[Tight bounds to $(\Phi_{\kappa}^{\spadesuit})^{-1}$]\label{thm:lograte} 
	Let $\kappa > 0$ and  error bound function $\Phi$ be logarithmic with exponent $\gamma > 0$ as  in Definition~\ref{def:log}.  Then, there
	exists a constant $\eta > 0$ such 
	that
	\begin{equation}\label{rate_log}
\sqrt{(\Phi_{\kappa}^{\spadesuit})^{-1}(-s)}  \quad \sim \quad \eta\left(\frac{1}{\ln(s)} \right)^\gamma,\ \ {\rm as}\ \ s\to\infty.
	\end{equation}
In particular, there are constants $\eta_1 > 0$, $\eta_2 > 0$ and $N > 0$  such that

	\begin{equation}\label{rate_log2}
	\eta_1\left(\frac{1}{\ln(s)}\right)^{\gamma}\leq \sqrt{(\Phi_{\kappa}^{\spadesuit})^{-1}(-s)} \le \eta_2\left(\frac{1}{\ln(s)}\right)^{\gamma}, \ \ \forall\ s\ge N.
	\end{equation}
\end{theorem}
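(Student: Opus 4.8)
The plan is to unwind the definitions of $\phi_{\kappa,\Phi}$ and $\Phi_{\kappa}^{\spadesuit}$ for a logarithmic error bound, reduce everything to an asymptotic statement about a single explicit function, and then apply the two lemmas just proved (Lemma~\ref{lem_approx} and Lemma~\ref{lemma:inv}). Concretely, fix $b = \kappa$ in Definition~\ref{def:log}, so there are constants $\kappa_0 > 0$ and $a_0 > 0$ with $\Phi(a,\kappa) = \kappa_0(-1/\ln a)^{\gamma}$ for $a \in (0,a_0)$. From \eqref{def_psi} we get $\phi_{\kappa,\Phi}(t) = \kappa_0^2 (-2/\ln t)^{\gamma}$ for all small $t > 0$, say $t \in (0,t_0)$. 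Since $\phi_{\kappa,\Phi}$ is continuous and monotone increasing on this range, its generalized inverse coincides with the usual inverse there, and a direct computation gives, for small $s > 0$,
\[
\phi_{\kappa,\Phi}^{-}(s) = \exp\!\left(-\,\frac{2\kappa_0^{2/\gamma}}{s^{1/\gamma}}\right).
\]
First I would record this explicit form and note that $\phi_{\kappa,\Phi}^{-}(s) = \tilde f_{\gamma}(c s^{1/\gamma})$-type behaviour is exactly of the shape handled by Lemma~\ref{lem_approx} after the change of variables below.

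Next I would pass to $\Phi_{\kappa}^{\spadesuit}$. By \eqref{def_phi}, choosing $\delta \in (0,t_0)$, for $t \in (0,\delta)$ we have $\Phi_{\kappa}^{\spadesuit}(t) = -\int_{t}^{\delta} 1/\phi_{\kappa,\Phi}^{-}(s)\,ds$, hence
\[
-\Phi_{\kappa}^{\spadesuit}(t) = \int_{t}^{\delta} \exp\!\left(\frac{2\kappa_0^{2/\gamma}}{s^{1/\gamma}}\right) ds.
\]
Now substitute $s = 1/x^{\gamma}$ (so $x = s^{-1/\gamma}$, $ds = -\gamma x^{-\gamma-1}\,dx$), which turns the integral into $\gamma\int_{\delta^{-1/\gamma}}^{t^{-1/\gamma}} x^{-\gamma - 1} e^{2\kappa_0^{2/\gamma} x}\,dx$. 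Writing $\Psi(u) := -\Phi_{\kappa}^{\spadesuit}(1/u)$ as in Theorem~\ref{thm:upper}, and making the outer change of variable $u = t^{-1}$, we obtain an integral of the form $\int^{u^{1/\gamma}} (\text{power of }x)\, e^{c x}\,dx$ with $c = 2\kappa_0^{2/\gamma} > 0$. Taking logarithms and invoking Lemma~\ref{lem_approx} with $f(x) = cx$ (which is in $\RV$ with index $1$, after absorbing the harmless polynomial factor $x^{-\gamma-1}$, whose logarithm is $o(x)$ — this step needs a short justification, e.g. reapplying Lemma~\ref{lem_approx} or a direct squeeze), we get $\ln \Psi(u) \sim c\,u^{1/\gamma}$ as $u \to \infty$. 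Equivalently, $\ln(-\Phi_{\kappa}^{\spadesuit}(t)) \sim c\,t^{-1/\gamma}$ as $t \to 0_+$.

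From $\ln\Psi(u) \sim c u^{1/\gamma}$, which is $\RV$ with positive index $1/\gamma$, I would apply Lemma~\ref{lemma:inv} to the pair $\ln\Psi$ and $u \mapsto c u^{1/\gamma}$: their inverses are asymptotically equivalent, giving $(\ln\Psi)^{-1}(v) \sim (v/c)^{\gamma}$ as $v \to \infty$. Since $\Psi$ is continuous and strictly increasing with $\Psi^{-1}(x) = 1/(\Phi_{\kappa}^{\spadesuit})^{-1}(-x)$, composing gives $\Psi^{-1}(x) = (\ln\Psi)^{-1}(\ln x) \sim (\ln x / c)^{\gamma}$, i.e.
\[
\frac{1}{(\Phi_{\kappa}^{\spadesuit})^{-1}(-x)} \sim \left(\frac{\ln x}{c}\right)^{\gamma}, \qquad x \to \infty,
\]
so $(\Phi_{\kappa}^{\spadesuit})^{-1}(-s) \sim c^{\gamma}(\ln s)^{-\gamma}$. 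Taking square roots yields \eqref{rate_log} with $\eta = c^{\gamma/2} = (2\kappa_0^{2/\gamma})^{\gamma/2}$, and \eqref{rate_log2} is then immediate: asymptotic equivalence to $\eta(\ln s)^{-\gamma}$ means the ratio tends to $1$, so it lies in $[\eta/2, 2\eta]$ eventually, giving $\eta_1 = \eta/2$, $\eta_2 = 2\eta$ and some threshold $N$. The main obstacle I anticipate is bookkeeping in the chain of substitutions — in particular verifying cleanly that the polynomial prefactor $x^{-\gamma-1}$ inside the exponential integral does not affect the logarithmic asymptotics (so that Lemma~\ref{lem_approx} applies with $f(x) = cx$), and making sure the passages to generalized inverses are all on intervals where the functions are genuinely continuous and strictly monotone so that $f^{-} = f^{\leftarrow} = f^{-1}$.
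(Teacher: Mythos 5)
Your overall strategy is exactly the paper's: compute $\phi_{\kappa,\Phi}$ and its inverse explicitly near $0$ from the logarithmic form of $\Phi(\cdot,\kappa)$, express $\Psi(t)=-\Phi_{\kappa}^{\spadesuit}(1/t)$ as an integral of $e^{f(x)}$ with $f$ regularly varying of positive index, apply Lemma~\ref{lem_approx} and then Lemma~\ref{lemma:inv}, and translate back. (Incidentally, the prefactor you worry about is handled in the paper simply by putting it inside the exponent, $f(x)=c_1x^{1/(2\gamma)}-2\ln x$, which is still $\RV$ of index $1/(2\gamma)$, so no separate "absorption" argument is needed.)

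However, there is a concrete arithmetic error at the very first step that propagates and makes your final claim not follow from your own chain. Since $\phi_{\kappa,\Phi}(t)=\Phi(\sqrt{t},\kappa)^{2}$, squaring also doubles the exponent: $\phi_{\kappa,\Phi}(t)=\kappa_0^{2}\left(-2/\ln t\right)^{2\gamma}$, not $\kappa_0^{2}\left(-2/\ln t\right)^{\gamma}$. Consequently the correct inverse is $\phi_{\kappa,\Phi}^{-}(s)=\exp\!\left(-c_1 s^{-1/(2\gamma)}\right)$ with $c_1=2\kappa_0^{1/\gamma}$, which leads to $\ln\Psi(u)\sim c_1u^{1/(2\gamma)}$, then $\Psi^{-1}(s)\sim(\ln s/c_1)^{2\gamma}$, hence $(\Phi_{\kappa}^{\spadesuit})^{-1}(-s)\sim c_1^{2\gamma}(\ln s)^{-2\gamma}$, and the square root gives the stated exponent $\gamma$ with $\eta=c_1^{\gamma}=2^{\gamma}\kappa_0$. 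With your exponents, your own displayed relation $(\Phi_{\kappa}^{\spadesuit})^{-1}(-s)\sim c^{\gamma}(\ln s)^{-\gamma}$ would, after the square root, give $\eta(1/\ln s)^{\gamma/2}$, which is not \eqref{rate_log}; so as written the conclusion is internally inconsistent. A second, minor slip: the cutoff $\delta$ in \eqref{def_phi} must be chosen in the image of $\phi_{\kappa,\Phi}$ near $0$ (i.e.\ $\delta\le\phi_{\kappa,\Phi}(t_0)$, as in the paper's choice $\delta=c_2$), not merely $\delta<t_0$, so that the closed-form expression for $\phi_{\kappa,\Phi}^{-}$ is valid on the whole integration range; this is easily repaired and, by Remark~\ref{remark_delta}-type reasoning, does not affect the asymptotics. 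Fixing the exponent bookkeeping turns your argument into the paper's proof.
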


\begin{proof}
By assumption, there exist $c > 0$ and  $0 <\epsilon < 1$ such that for $a\in(0,\,\epsilon]$, 
\begin{equation*}
\Phi(a,\,\kappa) = c\left(-\frac{1}{\ln(a)}\right)^{\gamma}.
\end{equation*}
By the definition of $\phi_{\kappa,\Phi}$, we have
	\begin{equation*}
	\phi_{\kappa,\Phi}(t) = \Phi^2(\sqrt{t},\,\kappa) = c^22^{2\gamma}\frac{1}{(\ln (t))^{2\gamma}}, \ \ \ t\in(0,\, \epsilon^2].
	\end{equation*}
	Let $c_1:= 2c^{1/\gamma}$ and $c_2:= c_1^{2\gamma}/(2\ln(\epsilon))^{2\gamma}$. We then obtain
\begin{equation*}
	\phi^{-1}_{\kappa,\Phi}(s) = e^{-\frac{c_1}{s^{1/(2\gamma)}}}, \ \ \ s\in(0,\,c_2].
\end{equation*}
Now, we fix $\delta = c_2$ in the definition of $\Phi_{\kappa}^{\spadesuit}$, see \eqref{def_phi}.  Let $\Psi(t):= -\Phi_{\kappa}^{\spadesuit}(1/t)$. Next, we consider the behavior of $\Psi$ on $[1/c_2,\,\infty)$. For $t \ge 1/c_2$, we compute
\begin{equation*}
	\Psi(t) = -\int_{\delta}^{1/t}\frac{1}{\phi^{-1}_{\kappa,\Phi}(s)}d\,s = \int_{1/\delta}^t\frac{e^{c_1x^{1/(2\gamma)}}}{x^2}d\,x = \int_{1/c_2}^te^{c_1x^{1/(2\gamma)} - 2\ln (x)}d\,x.
\end{equation*}
	Let $f(x):= c_1x^{1/(2\gamma)} - 2\ln (x)$. Then, a direct limit computation shows that $f|_{[1/c_2,\,\infty)}\in\RV$ with index $1/(2\gamma)$. By Lemma~\ref{lem_approx}, we have
	\begin{equation*}
	\ln\Psi(t) = \ln\int_{1/c_2}^te^{f(x)} \quad \sim \quad f(t) \quad \sim\quad c_1t^{1/(2\gamma)},
	\end{equation*}
as $t \to \infty$.
Let $g(t) \coloneqq c_1t^{1/(2\gamma)}$.
Since $g$ belongs to $\RV$ with positive index $1/(2\gamma)$ and both $\ln \Psi$ and $g$ are continuous monotone increasing unbounded functions we can invoke Lemma~\ref{lemma:inv} which tells us that
\[
\Psi^{-1}(e^t) = (\ln \Psi)^{-1}(t) \quad \sim\quad g^{-1}(t) \ \ {\rm as}\ \ t\to\infty.
\]
We note that if $f_1(t) \sim f_2(t)$ as $t \to \infty$ holds then
$1/f_1(t) \sim 1/f_2(t)$ as $t \to \infty$ holds as well. With that in mind, we let $s = e^t$ and recalling 
that $\Psi(s)= -\Phi_{\kappa}^{\spadesuit}(1/s)$, we 
obtain
\begin{equation}\label{eq:final_log}
\sqrt{(\Phi_{\kappa}^{\spadesuit})^{-1}(-s)} = \frac{1}{\sqrt{\Psi^{-1}(s)}} \quad \sim \quad \frac{1}{\sqrt{g^{-1}(\ln s)}} = c_1^{
\gamma}\left(\frac{1}{\ln(s)} \right)^\gamma,
\end{equation}
as $s \to \infty$, which proves \eqref{rate_log}.
Finally, \eqref{rate_log2} is a consequence of \eqref{eq:final_log} and the definition of asymptotic equivalence which implies that for sufficiently large $s$ we have
\[
\frac{\sqrt{(\Phi_{\kappa}^{\spadesuit})^{-1}(-s)}}{c_1^{\gamma}\ln(s)^{-\gamma}} \quad \in\quad [0.5,2].
\]
This completes the proof.\qed

\end{proof}


%

\section{Convergence rate results for conic feasibility problems}\label{sec:cone}
In this section, we analyze the following  problem.\begin{equation}
{\rm find}\ x \in \stdCone \cap \stdAffine, \tag{Cone} \label{eq:clp}
\end{equation}
where $\stdCone$ is a closed convex cone, 
$\stdAffine$ is an affine space satisfying $\stdCone \cap \stdAffine \neq \emptyset$. 
First, we present some motivation for \eqref{eq:clp}. A conic linear program (CLP) is the problem of minimizing/maximizing a linear function subject to a constraint of the form $x \in \stdCone \cap \stdAffine$.
In this context, the methods discussed in Sections~\ref{sec:proj} can be useful to find feasible solutions to a CLP or to refine slightly infeasible solutions. See, for example, \cite{HM11}.

As discussed in Section~\ref{sec:proj}, the convergence rate of the methods is governed by the type of error bound that exists between $\stdCone$ and $\stdAffine$. 
Here we take a closer look at the error bound proved in 
\cite{L19} for the case where $\stdCone$ is a so-called 
\emph{amenable cone}.
$\stdCone$ is said to be \emph{amenable} if for every face $\stdFace$ of $\stdCone$ there exists a constant $\kappa$ such that $\dist(x, \stdFace) \leq \kappa \dist(x, \stdCone)$ holds for every $x \in \spanVec \stdFace$. The error bound for amenable cones described in 
\cite{L19} requires the following notion.

\begin{definition}[Facial residual functions]\label{def:frf}
	Let $\stdFace$ be a face of $\stdCone$ and $z \in \stdFace^*$.
	We say that $\psi_{\stdFace,z} : \R _+ \times \R _+ \to \R_+$  is a \emph{facial residual function} for 
	$z$ and $\stdFace$ if the following properties are satisfied:
	\begin{enumerate}[$(i)$]
		\item $\psi_{\stdFace,z}$ is nonnegative, monotone nondecreasing in each argument and $\psi(0,\, \alpha) = 0$ for every $\alpha \in \R_+$.
		\item whenever $x \in \spanVec \stdCone$ satisfies  the inequalities
		\[
		\dist(x,\, \stdCone) \leq \epsilon, \quad \inProd{x}{z} \leq \epsilon, \quad \dist(x,\, \spanVec \stdFace ) \leq \epsilon
		\]
		we have:
		\[
		\dist(x, \,  \stdFace\cap\{z\}^\perp)  \leq 
		\psi_{\stdFace, z} (\epsilon,\, \norm{x}).
		\]	
	\end{enumerate}
\end{definition}
We say that 
a function $\tilde \psi_{\stdFace,z}$ is a \emph{positive rescaling of $\psi_{\stdFace,z}$} if there are positive constants $M_1,M_2,M_3$ such that 
$\tilde \psi _{\stdFace,z}(\epsilon,\, \norm{x}) = M_3\psi_{\stdFace,z} (M_1\epsilon,\, M_2\norm{x}).$ We will also need to compose facial residual 
functions in a special way. We define $\psi_2\comp \psi_1$ to be the function satisfying 
\begin{equation}
(\psi_2\comp \psi_1)(a,\, b) = \psi_2(a+\psi_1(a,\,b),\, b),
\qquad \forall \, a,b \in \R. \label{eq:comp}
\end{equation}
In order to give the precise statement of the error bound in \cite{L19}, the final component we need is \emph{facial reduction} \cite{BW81_2,WM13,P13}.
The basic facial reduction algorithm as described in \cite{WM13,P13} shows that it is always possible to obtain a chain of faces of $\stdCone$ 
\begin{equation}\label{eq:chain}
\stdFace _{\ell}  \subsetneq \cdots \subsetneq \stdFace _1 = \stdCone,
\end{equation}
where the following properties are satisfied.
\begin{enumerate}[$(i)$]
	\item For $1 \leq i < \ell$, there exists $z_i \in \stdFace _i^* \cap \stdAffine^\perp$ such that $\stdFace_{i+1} = \stdFace _i\cap \{z_i\}^\perp.$
	\item $\stdFace_{\ell}\cap \stdAffine$ satisfies some desirable constraint qualification.
\end{enumerate}
Here, $\ell$ is called the \emph{length} of the chain.
Classical facial reduction approaches usually find chain of faces such that 
$\stdFace _{\ell}\cap \stdAffine$ satisfies Slater's condition, i.e., $
(\reInt \stdFace _{\ell}) \cap \stdAffine \neq \emptyset.
$
However, the FRA-Poly algorithm \cite{LMT15} finds 
a face $\stdFace _{\ell}$ satisfying a weaker constraint qualification called \emph{partial polyhedral Slater's condition} (PPS condition), which we will now describe. Suppose that $\stdFace_{\ell}$ can be written as a direct product $P\times \tilde \stdFace_{\ell}$, where $P$ is a polyhedral cone and $\tilde \stdFace_{\ell}$ is an arbitrary cone. If 
\[
(P\times (\reInt \tilde \stdFace _{\ell})) \cap \stdAffine \neq \emptyset,
\]
then we say that the PPS condition holds,  see Definition~1 in \cite{LMT15}. $P$ is allowed to be trivial, so if Slater's condition is satisfied the PPS condition is also satisfied.
With that in mind, we define two key quantities.
\begin{itemize}
	\item The \emph{singularity degree} $\ds(\stdCone,\, \stdAffine)$ of the pair $\stdCone$, $\stdAffine$ is the length of the smallest chain of faces (as in \eqref{eq:chain}) where $\stdFace _{\ell}$ and $\stdAffine$ satisfy Slater's condition.
	\item The \emph{distance to the partial Polyhedral Slater's condition} $\dpp(\stdCone,\, \stdAffine)$ is the length \emph{minus one} of the smallest chain of faces (as in \eqref{eq:chain}) where $\stdFace _{\ell}$ and $\stdAffine$ satisfy the PPS condition. Since Slater's condition is a  stronger requirement than the PPS condition, we have $
	\dpp(\stdCone,\, \stdAffine) \leq \ds(\stdCone,\, \stdAffine).$
\end{itemize}
We are now positioned to state the error bound in \cite{L19}.
\begin{theorem}[Error bound for amenable cones, Theorem~23 in \cite{L19}]\label{theo:err}
	Let $\stdCone$ be a closed convex pointed \emph{amenable cone}, $\stdAffine$ be an affine space such that $\stdCone \cap \stdAffine\neq \emptyset$. Let 
	$
	\stdFace _{\ell}  \subsetneq \cdots \subsetneq \stdFace_1 = \stdCone 
	$
	be a chain of faces of $\stdCone$  as in \eqref{eq:chain} 
	together with $z_i \in \stdFace _i^*\cap \stdAffine^\perp$  as in item $(i)$.
	Furthermore,  assume that $\stdFace_{\ell}, \stdAffine$ satisfy the 
	PPS condition.
	For $i = 1,\ldots, \ell - 1$, let $\psi _{i}$ be a facial residual function for $\stdFace_{i}$, $z_i$. 
	Then, after positive rescaling the $\psi _{i}$, there is a  positive constant $\kappa$ such that if  $x \in \spanVec \stdCone$ satisfies the inequalities
	\[
	\quad \dist(x,\, \stdCone) \leq \epsilon, \quad \dist(x,\, \stdAffine) \leq \epsilon,
	\]
	we have 
	\[
	\dist\left(x,\, \stdCone \cap \stdAffine \right) \leq (\kappa \norm{x} + \kappa )(\epsilon+\varphi(\epsilon,\, \norm{x})),
	\]
	where $
	\varphi = \psi _{{\ell-1}}\comp \cdots \comp \psi_{{1}}$, if $\ell \geq 2$. If $\ell = 1$, we let $\varphi$ be the function satisfying $\varphi(\epsilon, \, \norm{x}) = \epsilon$. 
\end{theorem}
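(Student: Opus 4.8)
The plan is to induct on the length $\ell$ of the chain \eqref{eq:chain}, realized concretely as a \emph{chaining} of the facial residual functions $\psi_{1},\dots,\psi_{\ell-1}$ down the faces $\stdFace_{1}\supsetneq\cdots\supsetneq\stdFace_{\ell}$, so as to produce a bound $\dist(x,\stdFace_{\ell})\le\varphi(\epsilon,\norm{x})$, followed by a Lipschitzian-type error bound between $\stdFace_{\ell}$ and $\stdAffine$ that is available precisely because $\stdFace_{\ell},\stdAffine$ satisfy the PPS condition. Two preliminary observations organize everything. First, $\stdFace_{\ell}\cap\stdAffine=\stdCone\cap\stdAffine$: the inclusion $\subseteq$ is clear since $\stdFace_{\ell}\subseteq\stdCone$, while for $\supseteq$, if $x\in\stdCone\cap\stdAffine$ then inductively $x\in\stdFace_{i}$ forces $\inProd{x}{z_{i}}\ge0$ (as $z_{i}\in\stdFace_{i}^{*}$) and $z_{i}\in\stdAffine^{\perp}$ forces $\inProd{x}{z_{i}}=0$, hence $x\in\stdFace_{i}\cap\{z_{i}\}^{\perp}=\stdFace_{i+1}$; thus $x\in\stdFace_{\ell}$. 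Second, the case $\ell=1$ (where $\varphi(\epsilon,\norm{x})=\epsilon$) will follow immediately from the finishing step alone, so from now on we may assume $\ell\ge2$.

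For the chaining, I would prove by induction on $i\in\{1,\dots,\ell\}$ that, after a positive rescaling of each $\psi_{j}$, every $x\in\spanVec\stdCone$ with $\dist(x,\stdCone)\le\epsilon$ and $\dist(x,\stdAffine)\le\epsilon$ satisfies $\dist(x,\stdFace_{i})\le\varphi_{i}(\epsilon,\norm{x})$, where $\varphi_{1}(\epsilon,\cdot)\coloneqq\epsilon$ (valid since $\stdFace_{1}=\stdCone$) and $\varphi_{i+1}\coloneqq\psi_{i}\comp\varphi_{i}$ in the sense of \eqref{eq:comp}. For the step from $i$ to $i+1$ I invoke property $(ii)$ of the facial residual function $\psi_{i}$ for the pair $\stdFace_{i},z_{i}$ from Definition~\ref{def:frf}, whose three hypotheses are met with the common bound $\epsilon_{i}'\coloneqq\max\{1,\norm{z_{i}}\}\,\epsilon+\varphi_{i}(\epsilon,\norm{x})$: indeed $\dist(x,\stdCone)\le\epsilon\le\epsilon_{i}'$; next $\inProd{x}{z_{i}}=\inProd{x-P_{\stdAffine}(x)}{z_{i}}\le\norm{z_{i}}\dist(x,\stdAffine)\le\norm{z_{i}}\epsilon\le\epsilon_{i}'$, using that $z_{i}\in\stdAffine^{\perp}$ makes $\inProd{\cdot}{z_{i}}$ vanish on $\stdAffine$; and $\dist(x,\spanVec\stdFace_{i})\le\dist(x,\stdFace_{i})\le\varphi_{i}(\epsilon,\norm{x})\le\epsilon_{i}'$ by the induction hypothesis. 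Property $(ii)$ then yields $\dist(x,\stdFace_{i+1})=\dist(x,\stdFace_{i}\cap\{z_{i}\}^{\perp})\le\psi_{i}(\epsilon_{i}',\norm{x})$, and folding the factor $\max\{1,\norm{z_{i}}\}$ into a positive rescaling of $\psi_{i}$ and using its coordinate-wise monotonicity converts the right-hand side into $\psi_{i}\bigl(\epsilon+\varphi_{i}(\epsilon,\norm{x}),\,\norm{x}\bigr)=\varphi_{i+1}(\epsilon,\norm{x})$. Unrolling, $\varphi\coloneqq\varphi_{\ell}=\psi_{\ell-1}\comp\cdots\comp\psi_{1}$ once the (finitely many) rescalings are absorbed into the $\psi_{i}$ and the nesting convention for $\comp$ is fixed to match the order in which the faces are peeled off.

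It remains to combine $\dist(x,\stdFace_{\ell})\le\varphi(\epsilon,\norm{x})$ with $\dist(x,\stdAffine)\le\epsilon$ through an error bound between $\stdFace_{\ell}$ and $\stdAffine$. Since $\stdCone$ is amenable, so is its face $\stdFace_{\ell}$, and because $\stdFace_{\ell},\stdAffine$ satisfy the PPS condition there is a constant $\kappa$ with $\dist(x,\stdFace_{\ell}\cap\stdAffine)\le(\kappa\norm{x}+\kappa)\bigl(\dist(x,\stdFace_{\ell})+\dist(x,\stdAffine)\bigr)$ for all $x\in\spanVec\stdCone$ --- this is the substantive ingredient and the main obstacle: it rests on the fact that intersecting with the polyhedral factor $P$ of $\stdFace_{\ell}$ preserves Hoffman-type bounds while the non-polyhedral factor $\tilde\stdFace_{\ell}$ meets $\stdAffine$ through a point in $P\times\reInt\tilde\stdFace_{\ell}$, giving a local Lipschitz error bound whose constant on $\{\norm{\cdot}\le\norm{x}\}$ can be taken affine in $\norm{x}$; the needed statement is provided by \cite{L19} (building on the FRA-Poly analysis of \cite{LMT15}). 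Substituting the chained bound and using $\stdFace_{\ell}\cap\stdAffine=\stdCone\cap\stdAffine$ gives exactly $\dist(x,\stdCone\cap\stdAffine)\le(\kappa\norm{x}+\kappa)(\epsilon+\varphi(\epsilon,\norm{x}))$; the case $\ell=1$ is the same display with $\stdFace_{\ell}=\stdCone$ and $\varphi(\epsilon,\norm{x})=\epsilon$. Besides the PPS error bound itself, the only delicate point is the bookkeeping of the finitely many positive rescalings (each absorbing one constant $\max\{1,\norm{z_{i}}\}$ and one "$\max$-to-sum" simplification) together with the verification that the convention chosen for $\comp$ in the definition of $\varphi$ matches the recursion $\varphi_{i+1}=\psi_{i}\comp\varphi_{i}$.
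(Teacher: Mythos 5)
This statement is imported verbatim from \cite{L19} (it is Theorem~23 there); the present paper offers no proof of it, so there is no in-paper argument to compare yours against. Measured against the argument in the source, your chaining induction is the right mechanism and is carried out correctly: the identity $\stdFace_{\ell}\cap\stdAffine=\stdCone\cap\stdAffine$ obtained from $z_i\in\stdFace_i^*\cap\stdAffine^\perp$, the verification of the three hypotheses of Definition~\ref{def:frf} at the common level $\max\{1,\norm{z_i}\}\epsilon+\varphi_i(\epsilon,\norm{x})$ (in particular $\inProd{x}{z_i}=\inProd{x-P_{\stdAffine}(x)}{z_i}\le\norm{z_i}\dist(x,\stdAffine)$ and $\dist(x,\spanVec\stdFace_i)\le\dist(x,\stdFace_i)$), and the absorption of the constants $\max\{1,\norm{z_i}\}$ (and the harmless factor $2$ from the base case $\varphi_1(\epsilon,\cdot)=\epsilon$) into positive rescalings via coordinate-wise monotonicity are all sound, and they do reproduce $\varphi=\psi_{\ell-1}\comp\cdots\comp\psi_1$ up to the allowed rescaling.

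The genuine gap is the one you flag yourself: the finishing step, namely the existence of $\kappa>0$ with $\dist(x,\stdFace_{\ell}\cap\stdAffine)\le(\kappa\norm{x}+\kappa)\bigl(\dist(x,\stdFace_{\ell})+\dist(x,\stdAffine)\bigr)$ under the PPS condition, is not proved but cited from \cite{L19} — the very paper whose theorem you are reconstructing. That lemma is the substantive analytic content of the result: it is where the factor $(\kappa\norm{x}+\kappa)$ comes from, and proving it requires a Bauschke--Borwein--Li-type bounded linear regularity argument for the splitting $\stdFace_{\ell}=P\times\tilde\stdFace_{\ell}$ (polyhedral factor plus relative-interior intersection) together with an argument that the regularity constant over a ball of radius $r$ can be taken affine in $r$; none of this is sketched, so what you have is a correct reduction of the theorem to that lemma rather than a proof. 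A related symptom: you invoke amenability only through the observation that $\stdFace_{\ell}$ inherits it from $\stdCone$, yet nothing in your outline actually uses it — with facial residual functions defined as in Definition~\ref{def:frf}, your chaining never appeals to amenability (and indeed \cite{LiLoPo20} later proves a version of this error bound without it). So you have not located where the amenability hypothesis does its work in the cited machinery; pinning that down, together with a proof of the PPS error bound with the affine-in-$\norm{x}$ constant, is what would be needed to make this self-contained.
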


Next, we will show that, under a mild condition, the error bound for amenable cones in Theorem~\ref{theo:err} naturally leads to a strict consistent error bound function. 

\begin{proposition}\label{prop:err_am_geb}
	Suppose that $\stdCone$ is a full-dimensional amenable cone, $\stdAffine$ is an affine space such that $\stdCone \cap \stdAffine \neq \emptyset$. Let $\varphi$ be defined as in Theorem~\ref{theo:err}. If $\varphi(\cdot,b)$ is right-continuous at $0$ for every 
	$b \geq 0$ then
	\begin{equation*}
	\Phi(a,\, b) \coloneq (\kappa b+ \kappa)(a+\varphi(a,\, b)).
	\end{equation*}
	is a strict consistent error bound function for $\stdCone$ and  $\stdAffine$.
\end{proposition}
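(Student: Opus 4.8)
The plan is to check, one by one, the defining properties in Definition~\ref{GEB} for the two sets $C_1 = \stdCone$ and $C_2 = \stdAffine$, whose intersection is nonempty by hypothesis. The key preliminary observation is that, since $\stdCone$ is full-dimensional, $\spanVec \stdCone = \mathcal{E}$, so the restriction ``$x \in \spanVec \stdCone$'' appearing in Theorem~\ref{theo:err} is vacuous here; this is what upgrades the error bound of Theorem~\ref{theo:err} to a bound valid on all of $\mathcal{E}$, as required by \eqref{def_eb}.

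First I would record the elementary properties of $\varphi$. By Definition~\ref{def:frf}, each facial residual function $\psi_i$ is nonnegative, monotone nondecreasing in each argument, and satisfies $\psi_i(0,\alpha) = 0$. A short induction on the chain length, using the composition rule $(\psi_2\comp\psi_1)(a,b) = \psi_2(a+\psi_1(a,b),b)$, shows that $\varphi = \psi_{\ell-1}\comp\cdots\comp\psi_1$ inherits all three properties: $a + \psi_1(a,b)$ is nondecreasing in both $a$ and $b$, so nonnegativity and monotonicity propagate; and $\psi_1(0,b) = 0$ forces the inner argument to vanish, giving $\psi_2(0,b) = 0$, etc., so $\varphi(0,b) = 0$ for every $b$. (If $\ell = 1$ then $\varphi(\epsilon,\cdot) = \epsilon$ and all of this is trivial.) In particular $\Phi(0,b) = (\kappa b + \kappa)\,\varphi(0,b) = 0$ for every $b$, which is part of item~$(ii)$.

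For item~$(i)$, given arbitrary $x \in \mathcal{E}$ I would set $\epsilon := \max\{\dist(x,\stdCone),\dist(x,\stdAffine)\}$, so that $\dist(x,\stdCone)\le\epsilon$ and $\dist(x,\stdAffine)\le\epsilon$; Theorem~\ref{theo:err} then applies and yields
\[
\dist\!\left(x,\stdCone\cap\stdAffine\right) \le (\kappa\|x\|+\kappa)(\epsilon + \varphi(\epsilon,\|x\|)) = \Phi\!\left(\max\{\dist(x,\stdCone),\dist(x,\stdAffine)\},\,\|x\|\right),
\]
which is exactly \eqref{def_eb}. For the rest of item~$(ii)$ with $b$ fixed: monotone nondecreasingness of $\Phi(\cdot,b)$ follows from the monotonicity of $a\mapsto a$ and of $\varphi(\cdot,b)$; right-continuity at $0$ follows from the hypothesis that $\varphi(\cdot,b)$ is right-continuous at $0$ together with $\varphi(0,b) = 0$, since then $\lim_{a\to0^+}\Phi(a,b) = (\kappa b+\kappa)\lim_{a\to0^+}(a+\varphi(a,b)) = 0 = \Phi(0,b)$. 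For item~$(iii)$ with $a$ fixed, $\Phi(a,\cdot)$ is the product of the nonnegative increasing function $b\mapsto\kappa b+\kappa$ with the nonnegative nondecreasing function $b\mapsto a+\varphi(a,b)$, hence nondecreasing. Finally, for strictness: when $b>0$ we have $\kappa b+\kappa>0$ (recall $\kappa>0$ from Theorem~\ref{theo:err}), and $a\mapsto a+\varphi(a,b)$ is strictly increasing because $a\mapsto a$ is and $\varphi(\cdot,b)\ge0$ is nondecreasing, so $\Phi(\cdot,b)$ is strictly increasing.

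I do not expect a genuine obstacle: the mathematical content is entirely contained in Theorem~\ref{theo:err}, and the work is the bookkeeping of pushing the qualitative properties of the $\psi_i$ through the composition $\comp$ and through the affine rescaling $a\mapsto(\kappa b+\kappa)(a+\varphi(a,b))$. The one place where the extra hypothesis is actually used is the right-continuity of $\Phi(\cdot,b)$ at $0$ — this is precisely why right-continuity of $\varphi(\cdot,b)$ at $0$ is assumed — and the one sanity check worth making explicit is that $\kappa>0$, so that item~$(iii)$ and strictness genuinely hold rather than degenerating.
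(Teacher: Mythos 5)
Your proposal is correct and follows essentially the same route as the paper's proof: full-dimensionality makes $\spanVec\stdCone=\mathcal{E}$ so Theorem~\ref{theo:err} applies with $\epsilon=\max\{\dist(x,\stdCone),\dist(x,\stdAffine)\}$, the monotonicity and vanishing-at-zero properties of $\Phi$ are inherited from the facial residual functions through the diamond composition, right-continuity at $0$ comes from the hypothesis on $\varphi$, and strictness comes from the additive term $\kappa a$ with $\kappa>0$. The only difference is that you spell out the propagation of these properties through $\comp$ in more detail than the paper does.
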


\begin{proof}
	The function $\varphi$ in Theorem~\ref{theo:err} is constructed from facial residual functions using the diamond composition defined in \eqref{eq:comp}. Since facial residual functions are,
	by definition, increasing in each coordinate, the same is true of $\varphi$.
	When we fix $b$, the function $\Phi(\cdot, b)$ is monotone increasing because all its terms are monotone nondecreasing and the term $\kappa a$ is monotone increasing. Now it remains to prove 
	\begin{equation}\label{format}
	\dist\left(x,\, \stdCone \cap \stdAffine \right) \leq \Phi\left(\max(\dist(x,\, \stdCone),\, \dist(x,\, \stdAffine)), \|x\|\right) \quad \forall x \in \mathcal{E}.
	\end{equation}
	The error bound in Theorem~\ref{theo:err}  holds for $x \in \spanVec \stdCone$. However, $\stdCone$ is full-dimensional, so $\spanVec \stdCone = \mathcal{E}$. Therefore, for $x \in \mathcal{E}$ if we let $\epsilon = \max(\dist(x,\, \stdCone),\, \dist(x,\, \stdAffine))$ in Theorem~\ref{theo:err}, we obtain \eqref{format}. Since $\varphi(\cdot,b)$ is right-continuous at $0$ for every $b$, $\Phi$ is indeed a consistent error bound function for $\stdCone$ and $\mathcal{V}$.
\qed\end{proof}
The only gap between Proposition~\ref{prop:err_am_geb} and Theorem~\ref{theo:err} is that the function $\varphi$ in the latter might not satisfy right-continuity at $0$. We address this issue next.

\begin{proposition}[Existence of facial residual functions satisfying right-con\-tinuity at $0$]\label{prop:frf_ex}
	Let $\stdCone$ be a closed convex cone, $\stdFace \subseteq \stdCone$ be a face and $z \in \stdFace^*$. 	
	There exists a facial residual function $\psi_{\stdFace,z}$ for $z$ and $\stdFace$ such that   $\psi_{\stdFace,z}(\cdot,b)$ is right-continuous at $0$ for every $b \geq 0$.
	In particular, under the setting of Theorem~\ref{theo:err}, there exists $\varphi:\R_+\times \R_+ \to \R_+$ such that $\varphi(\cdot,b)$ satisfies right-continuity at $0$ for every $b \geq 0$.  
\end{proposition}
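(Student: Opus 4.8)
The plan is to imitate the construction of the best consistent error bound function from Proposition~\ref{prop:uni}, transported to the facial setting. For $a,b \in \R_+$, introduce the problem of maximizing $\dist(x,\, \stdFace \cap \{z\}^\perp)$ over $x \in \spanVec \stdCone$ subject to $\dist(x,\, \stdCone) \le a$, $\inProd{x}{z} \le a$, $\dist(x,\, \spanVec \stdFace) \le a$ and $\norm{x} \le b$, and define $\psi_{\stdFace,z}(a,\,b)$ to be its optimal value when the feasible region is nonempty, and $0$ otherwise. Since $\spanVec \stdCone$ and all the constraint sets are closed and the ball $\{\norm{x} \le b\}$ is compact, the feasible region is compact (possibly empty); as $\dist(\cdot,\, \stdFace \cap \{z\}^\perp)$ is continuous (note $0 \in \stdFace \cap \{z\}^\perp$), $\psi_{\stdFace,z}$ is finite and nonnegative and the supremum is attained whenever it is taken over a nonempty set.

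Next I would verify that $\psi_{\stdFace,z}$ is a facial residual function in the sense of Definition~\ref{def:frf}. Enlarging $a$ or $b$ weakly enlarges the feasible region, so $\psi_{\stdFace,z}$ is monotone nondecreasing in each argument. For $\psi_{\stdFace,z}(0,\,\alpha) = 0$: any $x$ feasible at $a = 0$ satisfies $x \in \stdCone$ and $x \in \spanVec \stdFace$, hence $x \in \stdCone \cap \spanVec \stdFace = \stdFace$; combining $\inProd{x}{z} \le 0$ with $\inProd{x}{z} \ge 0$ (valid since $z \in \stdFace^*$ and $x \in \stdFace$) gives $x \in \stdFace \cap \{z\}^\perp$, so the objective vanishes; as $x = 0$ is always feasible, the optimal value is $0$. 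Property $(ii)$ is immediate: if $x \in \spanVec \stdCone$ obeys the three $\epsilon$-inequalities then $x$ is feasible for the problem with parameters $(\epsilon,\, \norm{x})$, so $\dist(x,\, \stdFace \cap \{z\}^\perp) \le \psi_{\stdFace,z}(\epsilon,\, \norm{x})$.

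It then remains to prove right-continuity at $0$. Fix $b \ge 0$; it is enough to show $\psi_{\stdFace,z}(a_k,\,b) \to 0$ for every sequence $a_k \to 0$ with $a_k \ge 0$. If not, there are $\delta > 0$ and a subsequence with $\psi_{\stdFace,z}(a_{k_j},\,b) \ge \delta$; for such $j$ the feasible region is nonempty, so there is an optimal $x^j$ with $\dist(x^j,\, \stdFace \cap \{z\}^\perp) \ge \delta$ satisfying the constraints at level $a_{k_j}$ inside the ball of radius $b$. Passing to a further subsequence $x^j \to \bar x \in \spanVec \stdCone$, and letting $j \to \infty$ in the constraints, we get $\dist(\bar x,\, \stdCone) = 0$, $\inProd{\bar x}{z} \le 0$ and $\dist(\bar x,\, \spanVec \stdFace) = 0$, so, exactly as above, $\bar x \in \stdFace \cap \{z\}^\perp$ and $\dist(\bar x,\, \stdFace \cap \{z\}^\perp) = 0$. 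Continuity then forces $\dist(x^j,\, \stdFace \cap \{z\}^\perp) \to 0$, contradicting the bound $\ge \delta$. This finishes the construction.

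For the ``in particular'' claim I would take each $\psi_i$ in Theorem~\ref{theo:err} to be of the form just built, so that $\psi_i(\cdot,\, b)$ is right-continuous at $0$ with $\psi_i(0,\,b) = 0$ for every $b$; a positive rescaling $\psi \mapsto M_3 \psi(M_1 \cdot,\, M_2 \cdot)$ preserves both properties, so the rescaled functions entering $\varphi = \psi_{\ell-1} \comp \cdots \comp \psi_1$ still have them. It suffices to note that the diamond composition $(\psi_2 \comp \psi_1)(a,\,b) = \psi_2(a + \psi_1(a,\,b),\, b)$ preserves these properties: if $\psi_1(\cdot,\,b)$ and $\psi_2(\cdot,\,b)$ are right-continuous at $0$ and vanish there, then $a + \psi_1(a,\,b) \to 0$ as $a \to 0_+$, hence $(\psi_2 \comp \psi_1)(a,\,b) \to \psi_2(0,\,b) = 0 = (\psi_2 \comp \psi_1)(0,\,b)$, and an induction on the number of compositions gives the claim for $\varphi$. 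I do not anticipate a real obstacle here --- the argument is a routine adaptation of Proposition~\ref{prop:uni} --- the only delicate points being the identity $\stdFace = \stdCone \cap \spanVec \stdFace$ used for $\psi_{\stdFace,z}(0,\cdot) = 0$ and careful bookkeeping of the empty-feasible-region convention in the right-continuity argument.
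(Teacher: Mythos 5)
Your construction is exactly the paper's: the paper also defines $\psi_{\stdFace,z}(\epsilon,\norm{x})$ as the optimal value of the same compact parametrized maximization problem over $\spanVec\stdCone$, proves right-continuity at $0$ by the same subsequence argument borrowed from Proposition~\ref{prop:uni} (using $\stdFace\cap\{z\}^\perp = \stdCone\cap\spanVec\stdFace\cap\{z\}^\perp$), and obtains the claim for $\varphi$ by noting that the diamond composition preserves right-continuity at $0$. Your proposal is correct and follows essentially the same route, with only harmless extra bookkeeping (the feasible region is in fact never empty, since $0$ is always feasible).
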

\begin{proof}
	Because we have $\stdFace = \stdCone \cap \spanVec \stdFace$ whenever $\stdFace \subseteq \stdCone$ is a face, the following equality holds:
	\[
	\stdFace \cap \{z\}^\perp = \stdCone \cap \spanVec \stdFace \cap \{z\}^\perp.
	\]
	To construct a facial residual function, we follow an 
	approach similar to the proof of Proposition~\ref{prop:uni} and Section~3.2 in \cite{L19}.  Let $\psi _{\stdFace,z}(\epsilon,\norm{x})$ be the optimal value of the following problem.
	\begin{align}
	\underset{v \in \spanVec \stdCone}{\sup} & \quad \dist(v,  \stdFace \cap \{z\}^\perp ) \label{eq:can}\tag{P}\\ 
	\mbox{subject to} & \quad \dist(v,\stdCone) \leq \epsilon \nonumber \\ 
	&\quad \dist(v, \spanVec \stdFace ) \leq \epsilon \nonumber\\
	&\quad \inProd{v}{z} \leq \epsilon \nonumber\\
	&\quad \norm{v} \leq \norm{x}\nonumber
	\end{align}
	Because $0 \in \stdFace \cap \{z\}^\perp$, \eqref{eq:can} is always feasible and the last constraint ensures compactness.
	With that, $\psi _{\stdFace,z}$ satisfy all the requirements in Definition~\ref{def:frf}. For every $b \geq 0$, 
	it can be shown that $\psi _{\stdFace,z}(\cdot,b)$ is right-continuous at $0$ by following the same argument used for showing the right-continuity of the best error bound function in the proof of Proposition~\ref{prop:uni}.
	
	Next, we observe that if $\psi_1$ and $\psi_2$ are two facial residual functions satisfying right-continuity at $0$, than their diamond composition \eqref{eq:comp} is also right-continuous at $0$, whenever the second argument is fixed. Therefore, under the setting of Theorem~\ref{theo:err}, the functions $\psi_i$ appearing therein can  all be selected in such a way that they satisfy right-continuity at $0$. So the same is true for the function $\varphi$ which is a diamond composition of facial residual functions.
\end{proof}


In view of Propositions~\ref{prop:err_am_geb} and \ref{prop:frf_ex}, 
when applying the methods of Section \ref{sec:proj_alg} to \eqref{eq:clp}, the convergence rate is governed by 
$\Phi$. Although it might not be clear at first, 
their convergence rates depend on the singularity degree  of the problem. This is because the singularity degree influences $\Phi$, which controls the error bound between $\stdCone$ and $\stdAffine$.
In the next subsection, we take a look at the special case of symmetric cones, where the error bounds and the rates are more concrete.

\subsection{The case of symmetric cones}\label{sec:sym}
A convex cone $\stdCone \subseteq \mathcal{E}$ is \emph{symmetric} if $\stdCone = \stdCone^*$  and 
for every $x,y \in \reInt \stdCone$ there exists a bijective linear map $A$ satisfying $Ax = y$, $A\stdCone = \stdCone$.
Symmetric cones are intrinsically connected to the theory of Euclidean Jordan Algebras, see \cite{K99,FK94,FB08}. We now recall some basic facts about them. 
 Examples of symmetric cones include the second-order cone, the symmetric positive semidefinite matrices over the reals, the nonnegative orthant and direct products of those cones. 
There is a notion of \emph{rank} for symmetric cones and the longest chain of faces of a symmetric cone is given by  $\ell_{\stdCone} = \matRank	\stdCone +1$, see \cite[Theorem 14]{IL17}. Finally, 
symmetric cones are amenable and their 
facial residual functions were computed in 
\cite[Theorem~35]{L19}.
%
With that, the following error bound holds.
\begin{theorem}[Theorem~37 and Remark~39 of \cite{L19}]\label{theo:sym_err}
	Let $\stdCone \subseteq \mathcal{E}$ be a symmetric cone, $\stdAffine \subseteq \mathcal{E}$ an affine subspace such that $\stdCone \cap \stdAffine \neq \emptyset$.	
	Then, there is a positive constant $\kappa$  such that whenever  $x$ and $\epsilon$ satisfy the inequalities
	\[
	\quad \dist(x,\, \stdCone) \leq \epsilon, \quad \dist(x, \, \stdAffine) \leq \epsilon,
	\]
	we have 
	\[
	\dist\left(x,\, \stdCone\cap \stdAffine\right) \leq 
	(\kappa \norm{x} + \kappa)\left(\sum _{j = 0}^{\dpp(\stdCone,\, \stdAffine)}  \epsilon^{({2^{-j})} }\norm{x}^{1-{2^{-j}} }\right).
	\]	
	If $\stdCone = \stdCone^1 \times \cdots \times \stdCone^s$ is the direct product of $s$ symmetric cones, we have 
	\[
	\dpp(\stdCone,\, \stdAffine) \leq \min \left\{\dim (\stdAffine^\perp), \, \sum _{i=1}^{s} (\matRank \stdCone^i -1),\, \ds(\stdCone,\stdAffine) \right\}.
	\]
\end{theorem}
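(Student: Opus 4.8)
The plan is to derive Theorem~\ref{theo:sym_err} as a specialization of the general amenable-cone error bound in Theorem~\ref{theo:err}, using the facial residual functions of symmetric cones computed in \cite[Theorem~35]{L19}. First I would recall that symmetric cones are amenable (stated in the excerpt), pointed, and full-dimensional in their span, so the hypotheses of Theorem~\ref{theo:err} are met. Fix a facial reduction chain $\stdFace_\ell \subsetneq \cdots \subsetneq \stdFace_1 = \stdCone$ of minimal length achieving the PPS condition, so $\ell - 1 = \dpp(\stdCone,\stdAffine)$, together with the associated $z_i \in \stdFace_i^* \cap \stdAffine^\perp$. The key structural input is that each face of a symmetric cone is again (isomorphic to) a symmetric cone, and that the facial residual function $\psi_i$ for $\stdFace_i, z_i$ can be taken, after positive rescaling, to be of H\"olderian type with exponent $1/2$: concretely $\psi_i(\epsilon,\norm{x}) = M(\sqrt{\epsilon\,\norm{x}} + \epsilon)$ for suitable constants, which is exactly the content of \cite[Theorem~35]{L19}. (When the relevant step is polyhedral the exponent is $1$, i.e.\ $\psi_i(\epsilon,\norm{x}) = M\epsilon$; this is why the sum can be truncated.)

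The second step is to compute the diamond composition $\varphi = \psi_{\ell-1} \comp \cdots \comp \psi_1$ appearing in Theorem~\ref{theo:err}, using the definition \eqref{eq:comp}: $(\psi_2\comp\psi_1)(a,b) = \psi_2(a + \psi_1(a,b), b)$. I would argue by induction on the number of composed functions that $\varphi(\epsilon,\norm{x})$ is bounded above, up to a constant depending only on the $M_i$, by $\sum_{j=0}^{\ell-1} \epsilon^{2^{-j}}\norm{x}^{1 - 2^{-j}}$. The heuristic is that each application of a square-root H\"olderian residual halves the current exponent of $\epsilon$ (and adjusts the $\norm{x}$ exponent so that the total degree stays $1$): starting from $\epsilon^1$, after one composition we pick up an $\epsilon^{1/2}\norm{x}^{1/2}$ term, after two an $\epsilon^{1/4}\norm{x}^{3/4}$ term, and so on, while the lower-order already-generated terms are reabsorbed into the constant using $\epsilon \leq \norm{x}$ on the relevant bounded region (or, more carefully, by homogeneity/degree bookkeeping). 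Then plugging this bound on $\varphi$ into the conclusion $\dist(x, \stdCone\cap\stdAffine) \leq (\kappa\norm{x}+\kappa)(\epsilon + \varphi(\epsilon,\norm{x}))$ of Theorem~\ref{theo:err} and absorbing the stray $\epsilon$ into the $j=0$ term of the sum yields exactly the displayed bound, with $\epsilon = \max(\dist(x,\stdCone),\dist(x,\stdAffine))$ being the natural choice as in Proposition~\ref{prop:err_am_geb}.

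For the final inequality on $\dpp(\stdCone,\stdAffine)$ when $\stdCone = \stdCone^1\times\cdots\times\stdCone^s$, I would bound the minimal PPS chain length by three separate arguments. The bound $\dpp \leq \ds$ is already stated in the excerpt (Slater implies PPS). The bound $\dpp(\stdCone,\stdAffine) \leq \dim(\stdAffine^\perp)$ follows because each step of facial reduction is witnessed by a nonzero reducing direction $z_i \in \stdAffine^\perp$ that is linearly independent from the previous ones (a standard facial-reduction fact, cf.\ \cite{LMT15}), so the chain cannot be longer than $\dim\stdAffine^\perp + 1$. The bound $\dpp(\stdCone,\stdAffine) \leq \sum_i(\matRank\stdCone^i - 1)$ uses that the longest chain of faces of a single symmetric cone $\stdCone^i$ has length $\matRank\stdCone^i + 1$ (stated above via \cite[Theorem~14]{IL17}), hence of the product cone has length $\sum_i \matRank\stdCone^i + 1$; since FRA-Poly stops at the PPS condition rather than full Slater, it can skip the final polyhedral descent within each block, shaving one unit per block and giving the $\sum_i(\matRank\stdCone^i - 1)$ count. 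Taking the minimum of the three bounds finishes the proof.

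The main obstacle I anticipate is the induction on the diamond composition in step two: diamond composition is not associative in an obvious way and the ``reabsorption of lower-order terms'' needs care, because $\epsilon \leq \norm{x}$ only holds after restricting to a bounded set, and the constants produced at each stage multiply. The cleanest route is probably to carry the inductive hypothesis in the precise form ``$\psi_k\comp\cdots\comp\psi_1(\epsilon,\norm{x}) \leq C_k\sum_{j=0}^{k}\epsilon^{2^{-j}}\norm{x}^{1-2^{-j}}$ for all $\epsilon \leq \norm{x}$'' and verify that applying one more square-root residual $\psi_{k+1}$ preserves this form, using monotonicity of $\psi_{k+1}$ together with the elementary inequality $\sqrt{(a+b)c} \leq \sqrt{ac} + \sqrt{bc}$ to split the composed argument; the polyhedral steps are handled by the same bookkeeping with exponent $1$ and only contribute to the constant.
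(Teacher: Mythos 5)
This statement is not proved in the paper at all: it is quoted verbatim from \cite{L19} (Theorem~37 and Remark~39), so there is no in-paper argument to compare your attempt against. Your plan — specializing the amenable-cone bound of Theorem~\ref{theo:err} with the exponent-$1/2$ facial residual functions of symmetric cones from \cite[Theorem~35]{L19}, bounding the diamond composition by induction to get $\sum_{j=0}^{\dpp(\stdCone,\,\stdAffine)}\epsilon^{2^{-j}}\norm{x}^{1-2^{-j}}$, and deriving the three bounds on $\dpp(\stdCone,\,\stdAffine)$ from linear independence of the reducing directions $z_i\in\stdAffine^\perp$ and the FRA-Poly chain-length analysis of \cite{LMT15} — is essentially the route taken in the cited source, and apart from the bookkeeping you already flag (which works out, since $\sqrt{a+b}\leq\sqrt{a}+\sqrt{b}$ makes the induction go through without needing $\epsilon\leq\norm{x}$), I see no gap in it.
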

Next, we verify that the error bound in Theorem~\ref{theo:err} is a \emph{bona fide} H\"olderian error bound.

\begin{proposition}\label{prop:sym_hold}
	Let $\stdCone$ and $\stdAffine$ be as in Theorem~\ref{theo:sym_err}. Then, $\stdCone$ and $\stdAffine$ satisfy a uniform H\"olderian error bound (Definition~\ref{def_hold}) with exponent
	$2^{-\dpp(\stdCone,\, \stdAffine)}$.
\end{proposition}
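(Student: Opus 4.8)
The plan is to deduce Proposition~\ref{prop:sym_hold} from the error bound in Theorem~\ref{theo:sym_err} by showing that, on any bounded set, the sum $\sum_{j=0}^{\dpp(\stdCone,\stdAffine)} \epsilon^{2^{-j}}\norm{x}^{1-2^{-j}}$ is dominated by a constant times $\epsilon^{2^{-\dpp(\stdCone,\stdAffine)}}$, where $\epsilon = \max(\dist(x,\stdCone),\dist(x,\stdAffine))$. First I would fix a bounded set $B$ and let $r$ be an upper bound on $\norm{x}$ for $x \in B$; I would also note that $\epsilon \le \dist(x,\stdCone\cap\stdAffine) + \text{(something bounded)}$, hence $\epsilon$ itself is bounded on $B$, say $\epsilon \le R$ for some $R$ depending on $B$. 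The key elementary observation is that for $0 \le \epsilon \le R$ and exponents $0 < 2^{-j} \le 1$, one has $\epsilon^{2^{-j}} \le \max(1,R)\,\epsilon^{2^{-\dpp}}$ whenever $j \le \dpp(\stdCone,\stdAffine)$, because $t \mapsto t^{a}$ is increasing in $a$ for $t \le 1$ and a single constant absorbs the case $t \in (1,R]$. Combining this with the bound $\norm{x}^{1-2^{-j}} \le \max(1,r)$ on $B$ and the factor $(\kappa\norm{x}+\kappa) \le \kappa(r+1)$, every term in the sum is at most a constant (depending only on $B$, $\kappa$, $R$, $r$, and $\dpp$) times $\epsilon^{2^{-\dpp(\stdCone,\stdAffine)}}$.

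Next, since there are exactly $\dpp(\stdCone,\stdAffine)+1$ terms, the whole sum is bounded by $\theta_B\,\epsilon^{2^{-\dpp(\stdCone,\stdAffine)}}$ for a suitable $\theta_B > 0$. Substituting $\epsilon = \max(\dist(x,\stdCone),\dist(x,\stdAffine))$ into Theorem~\ref{theo:sym_err} then yields
\[
\dist(x,\stdCone\cap\stdAffine) \le \theta_B\left(\max(\dist(x,\stdCone),\dist(x,\stdAffine))\right)^{2^{-\dpp(\stdCone,\stdAffine)}} \qquad \forall x \in B,
\]
which is precisely the uniform H\"olderian error bound of Definition~\ref{def_hold} with $m=2$, sets $C_1 = \stdCone$, $C_2 = \stdAffine$, and exponent $\gamma = 2^{-\dpp(\stdCone,\stdAffine)} \in (0,1]$; uniformity holds because the exponent is the same constant for every bounded set $B$, only $\theta_B$ varying with $B$.

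I do not expect a serious obstacle here — the argument is essentially the routine observation that a finite sum of powers $\epsilon^{2^{-j}}$ on a bounded domain is governed, up to constants, by its slowest-decaying term, namely the one with the smallest exponent $2^{-\dpp}$. The only mild subtlety is making sure the bounded-set bookkeeping is clean: one must verify that $\epsilon$ is genuinely bounded on $B$ (which follows since $\dist(x,\stdCone)$ and $\dist(x,\stdAffine)$ are continuous, hence bounded on the bounded set $B$) so that the constant $R$ exists and the inequality $\epsilon^{2^{-j}} \le \max(1,R^{1-2^{-j}})\,\epsilon^{2^{-\dpp}}$ can be applied. One could alternatively phrase this more slickly by noting $\epsilon^{a} = \epsilon^{2^{-\dpp}}\cdot\epsilon^{a-2^{-\dpp}}$ with $a - 2^{-\dpp} \ge 0$ and $\epsilon^{a-2^{-\dpp}} \le R^{a-2^{-\dpp}}$ bounded, but the content is the same.
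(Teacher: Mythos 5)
Your proposal is correct and follows essentially the same route as the paper: the paper also factors each term as $\psi(x)^{2^{-j}} = \psi(x)^{2^{-j}-2^{-d}}\psi(x)^{2^{-d}}$ with $d = \dpp(\stdCone,\stdAffine)$, bounds the excess factor $\psi(x)^{2^{-j}-2^{-d}}$ and the factors $\norm{x}^{1-2^{-j}}$, $\kappa\norm{x}+\kappa$ by constants on the bounded set $B$ (using continuity of the distance functions), and absorbs the finitely many terms into a single constant $\theta_B$, keeping the exponent $2^{-\dpp(\stdCone,\stdAffine)}$ independent of $B$. The bookkeeping details (your $R$, $r$ versus the paper's $\kappa_j$, $\tilde\kappa_j$, $\kappa_b$) are cosmetically different but the argument is the same.
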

\begin{proof}
	Let $C_1 = \stdCone$ and $C_2 = \stdAffine$. By Theorem~\ref{theo:sym_err}, we have
	\begin{equation}
	\dist(x,\, \stdCone\cap \stdAffine) \le \left(\kappa\|x\| + \kappa\right)\left(\sum_{j=0}^{\dpp(\stdCone,\, \stdAffine)}\left(\max_{1\le i\le 2}\dist(x,\, C_i)\right)^{2^{-j}}\|x\|^{1-2^{-j}}\right) \ \ \ \forall x\in\mathcal{E}. \label{eq:global_err}
	\end{equation}
	Let $B \subseteq \mathcal{E}$ be an arbitrary bounded set. For simplicity of 
	notation, let $d = \dpp(\stdCone,\, \stdAffine)$ and $\psi$ be the function such that 
	\[ 
	\psi(x) = \max_{1\le i\le 2}\dist(x,\, C_i)  \quad \forall x \in \mathcal{E}.
	\]
	From the continuity of $\psi$, we see that for every $j \in \{0,\ldots, d\}$  there exists a  positive 
	constant $\kappa _j$ such that
	\[
	\psi(x)^{2^{-j}}= \psi(x)^{2^{-j}-2^{-d}}\psi(x)^{2^{-d}} \leq \kappa_j \psi(x)^{2^{-d}} \quad \forall x \in B,
	\]
	where $\kappa_j$ can be taken, for example, to be the supremum of $\psi(\cdot)^{2^{-j}-2^{-d}}$ over $B$. Similarly, there are positive constants 
	$\tilde \kappa_j$ and $\kappa_b$ such that 
	\[
	\norm{x} \le \kappa_b, \ \ \   \norm{x}^{1-2^{-j}} \leq  \tilde \kappa _j \quad \forall x \in B.
	\]
	Let $\kappa_B := \kappa(\kappa_b + 1)(d+1)\sup_{j}\kappa_j\tilde \kappa_j$. It follows that whenever $x$ belongs to $B$ the right-hand side of  \eqref{eq:global_err} is  upper bounded by
	$
	\kappa _B \left(\max_{1\le i\le 2}\dist(x,\, C_i)\right)^{2^{-d}}.
	$
\end{proof}

We now present convergence results for symmetric cones taking into account all we have 
discussed so far.

\begin{theorem}[Convergence rate results for symmetric cones]\label{theo:sym_conv}
	Let $\stdCone \subseteq \mathcal{E}$ be a symmetric cone and $\stdAffine \subseteq \mathcal{E}$ be an affine space such that 
	$\stdCone \cap \stdAffine \neq \emptyset$.

	Let $\{x^k\}$ be such that Assumption~\ref{assp} is satisfied
	with $\inf _{k}a_k\geq 0$.
	Then, there exist $M > 0$ and $\theta \in (0,1)$ such that
	for any $k\ge 2\ell$,
	\begin{equation}\label{rate_am}
	\dist(x^k,\, \stdCone \cap \stdAffine) \le \begin{cases}
	M\, k^{-\frac1{2\left(2^{\dpp(\stdCone,\, \stdAffine)} - 1\right)}} & \text{if the PPS condition is not satisfied},\\
	M\, \theta^k & \text{otherwise},
	\end{cases}
	\end{equation}
	In particular, the following holds.
	\begin{enumerate}[$(i)$]
		\item The rate \eqref{rate_am} holds for any algorithm satisfying the assumptions of Corollary~\ref{proj_hold_ge}. 
		\item The rate \eqref{rate_am} holds MPA, POCSA (in particular, CPA) , MM (in particular, MDPA) and AWPA (see Example~\ref{remark_proj}).
		\item 	If $\stdCone = \stdCone^1 \times \cdots \times \stdCone^s$ is the direct product of $s$ symmetric cones, we have 
		$
		\dpp(\stdCone,\, \stdAffine) \leq \min \left\{\dim (\stdAffine^\perp), \, \sum _{i=1}^{s} (\matRank \stdCone^i -1),\, \ds(\stdCone,\stdAffine) \right\}.
		$
	\end{enumerate}
\end{theorem}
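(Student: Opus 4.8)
The plan is to reduce everything to the Hölderian machinery already developed: Proposition~\ref{prop:sym_hold} turns the amenable-cone error bound into a genuine uniform Hölderian error bound, and then Corollary~\ref{coro_abs_rate} (for the abstract dichotomy) together with Theorem~\ref{thm_sp_4} and Corollary~\ref{proj_hold_ge} (for the concrete algorithms) produce the stated rates.

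First I would apply Proposition~\ref{prop:sym_hold} with $C_1 = \stdCone$ and $C_2 = \stdAffine$: the pair satisfies a uniform Hölderian error bound in the sense of Definition~\ref{def_hold} with exponent $\gamma = 2^{-\dpp(\stdCone,\stdAffine)}$. The key observation is that $\gamma = 1$ holds if and only if $\dpp(\stdCone,\stdAffine) = 0$, and by the definition of the distance to the PPS condition this happens precisely when the length-one chain $\stdFace_1 = \stdCone$ already satisfies the PPS condition together with $\stdAffine$; conversely $\gamma = 2^{-\dpp(\stdCone,\stdAffine)} \in (0,1)$ exactly when the PPS condition fails. Since Assumption~\ref{assp} holds with $\inf_k a_k > 0$, we are in the setting of Corollary~\ref{coro_abs_rate}. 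Its conclusion \eqref{rate_abstract}, specialized to our uniform exponent $\gamma$, gives the linear bound $\dist(x^k,\stdCone\cap\stdAffine) \le M\theta^k$ for $k \ge 2\ell$ in the case $\gamma = 1$, and the sublinear bound $\dist(x^k,\stdCone\cap\stdAffine) \le M k^{-\frac{1}{2(\gamma^{-1}-1)}}$ for $k \ge 2\ell$ otherwise. Substituting $\gamma^{-1} = 2^{\dpp(\stdCone,\stdAffine)}$ yields the exponent $\frac{1}{2(2^{\dpp(\stdCone,\stdAffine)}-1)}$ appearing in \eqref{rate_am}, which establishes the displayed dichotomy.

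For the three enumerated items: item $(i)$ is immediate, since by the proof of Corollary~\ref{proj_hold_ge} any algorithm meeting its hypotheses produces a sequence satisfying Assumption~\ref{assp} with $\{a_k\}$ bounded below by a positive constant, so the argument above applies with $C_1 = \stdCone$, $C_2 = \stdAffine$. Item $(ii)$ follows from Theorem~\ref{thm_sp_4}: tracing its proof, MPA, POCSA (hence CPA), MM (hence MDPA) and AWPA all generate sequences satisfying Assumption~\ref{assp}, with $\ell = 1$ and $a_k \ge \tau > 0$ for MPA, MDPA, AWPA, and with $\ell = m = 2$ and $a_k$ a positive constant for POCSA; feeding these into the previous paragraph gives \eqref{rate_am} (with threshold $k \ge 2m = 4$ in the POCSA case, $k \ge 2$ otherwise). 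Item $(iii)$ is just the product estimate for $\dpp$ already recorded in Theorem~\ref{theo:sym_err}.

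The proof is largely bookkeeping built on earlier results; the only points requiring genuine care — and hence the main obstacle — are (a) correctly identifying the dichotomy ``PPS condition holds versus fails'' with ``$\gamma = 1$ versus $\gamma < 1$'' via the definition of $\dpp(\stdCone,\stdAffine)$, and (b) verifying, for each concrete algorithm in Example~\ref{remark_proj}, that the sufficient-decrease sequence $\{a_k\}$ supplied by Lemma~\ref{lemma_1} is indeed uniformly bounded away from zero, which is exactly what licenses the use of Corollary~\ref{coro_abs_rate}.
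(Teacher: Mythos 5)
Your proposal is correct and follows essentially the same route as the paper: Proposition~\ref{prop:sym_hold} supplies the uniform H\"olderian error bound with exponent $2^{-\dpp(\stdCone,\,\stdAffine)}$ (Lipschitzian precisely when the PPS condition holds, i.e.\ $\dpp = 0$), Corollary~\ref{coro_abs_rate} then yields the dichotomy \eqref{rate_am}, items $(i)$--$(ii)$ follow from Corollary~\ref{proj_hold_ge} (via Theorem~\ref{thm_sp_4}), and item $(iii)$ is Theorem~\ref{theo:sym_err}. Your reading of the hypothesis as $\inf_k a_k > 0$ (despite the ``$\geq 0$'' in the statement) matches what the paper's argument implicitly requires to invoke Corollary~\ref{coro_abs_rate}.
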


\begin{proof}
	By Proposition~\ref{prop:sym_hold} a uniform H\"olderian error bound holds 
	between $\stdCone$ and $\stdAffine$, with  exponent ${2^{-\dpp(\stdCone,\, \stdAffine)}}$.
	If either Slater's condition or the Partial Polyhedral Slater's condition is satisfied, then 
	the error bound in Proposition~\ref{prop:sym_hold} becomes a Lipschitz error bound.
	Applying Corollary~\ref{coro_abs_rate}, we obtain \eqref{rate_am}. Item $(i)$ and $(ii)$ are consequences of Corollary~\ref{proj_hold_ge}. Item~$(iii)$ follows from 
	Theorem~\ref{theo:sym_err}.
\qed\end{proof}
\begin{remark}
	Theorem~\ref{theo:sym_conv} extends the main result of Drusvyatskiy, Li and Wolkowicz \cite{DLW17} in several directions: from semidefinite cones to 
	symmetric cones and from the alternating projection algorithm to any algorithm  covered by Corollary~\ref{coro_abs_rate}.
\end{remark}

\subsection{The exponential cone and non-H\"olderian error bounds}\label{sec:exp}
In this subsection, we analyze two error bounds associated to the exponential cone  \cite{Ch09,CS17,MC2020}, which is defined as follows
\begin{align*}
\expCone:=&\left \{(x,y,z)\in \R^3\;|\;y>0,z\geq ye^{x/y}\right \} \cup \left \{(x,y,z)\;|\; x \leq 0, z\geq 0, y=0  \right \},
\end{align*}
see Remark~\ref{rem:exp} for a discussion on applications.  

Unfortunately, Theorem~\ref{theo:err} does not apply to the exponential cone, because $\expCone$ is not amenable, see \cite{LiLoPo20}.
However, in \cite{LiLoPo20}, the authors proved a generalization of the results of \cite{L19} and  proved tight error bounds for the exponential cone, which we will discuss using our tools. In what follows, let 
\[
\stdAffine_1 \coloneqq \{(x,0,z) \mid x,z \in \R\}\quad \text{and} \quad \stdAffine_2 \coloneqq  \{(x,y,0) \mid x,y \in \R \}.
\]
We now consider the error bounds associated to the following feasibility problems
\begin{align}
{\rm find}\ p \in \expCone \cap \stdAffine_1, \label{eq:feas1}\\
{\rm find}\ p \in \expCone \cap \stdAffine_2 \label{eq:feas2}.
\end{align}
For $p \in \R^3$, we define 
$d_i(p) \coloneqq \max\{\dist(p,\expCone), \dist(p,\stdAffine_i) \}$, for $i = 1,2$. 
We also need the following functions.
\begin{equation*}
\frakg_{-\infty}(t) := \begin{cases}
0 & \text{if}\;\; t=0,\\
-t \ln(t) & \text{if}\;\; t\in \left(0,1/e^2\right],\\
t+ \frac{1}{e^2} & \text{if}\;\; t>1/e^2.
\end{cases}\qquad
\frakg_\infty(t) :=  \begin{cases}
0 &\text{if}\;t=0,\\
-\frac{1}{\ln(t)} & \text{if}\;0 < t\leq \frac{1}{e^2},\\
\frac{1}{4}+\frac{1}{4}e^2t & \text{if}\;t>\frac{1}{e^2}.
\end{cases}
\end{equation*}
These functions arise in the computation of the facial residual functions for the exponential cone.
From \cite[Theorem~4.13]{LiLoPo20} and items (a) and (c) of \cite[Remark~4.14]{LiLoPo20}, we have that for every ball
$B_b \coloneqq \{p \in \R^3\mid \norm{p}\leq b\}$ with $b > 0$, there are constants $\rho_1(b)$ and $\rho_2(b)$ such that 
\begin{equation}\label{eq:l1}
\dist(p,\expCone \cap \stdAffine_1) \leq \rho_1(b) \frakg_{-\infty}(d_1(p)), \quad \forall p \in B_b
\end{equation}
and
\begin{equation}\label{eq:l2}
\dist(p,\expCone \cap \stdAffine_2) \leq \rho_2(b) \frakg_{\infty}(d_2(p)), \quad \forall p \in B_b.
\end{equation}
Naturally, $\rho_1$ and $\rho_2$ can be chosen so that they are monotone nondecreasing functions of $b$.
Because $\frakg_{-\infty}$ and $\frakg_{\infty}$ are continuous monotone increasing functions, we have  the following  strict consistent error bound functions for the problems in \eqref{eq:feas1} and \eqref{eq:feas2}, respectively:
\begin{equation}\label{eq:phi_exp}
\Phi_{\mathrm{et}}(a,b) \coloneqq \rho_1(b) \frakg_{-\infty}(a), \qquad \Phi_{\ln}(a,b) \coloneqq \rho_2(b) \frakg_{\infty}(a).
\end{equation}
These are examples of \emph{entropic} and \emph{logarithmic} error bounds, respectively. 
We note that it was proved in  \cite[Example~4.20]{LiLoPo20} that no H\"olderian error bound holds for the problem \eqref{eq:feas2}.
Furthermore, the bounds in \eqref{eq:l1} and \eqref{eq:l2} are tight up to a constant, see \cite[Remark~4.14]{LiLoPo20}.

Using $\Phi_{\mathrm{et}}$ and $\Phi_{\ln}$ in \eqref{eq:phi_exp} we can analyse the convergence rate of algorithms for \eqref{eq:feas1} and \eqref{eq:feas2}.  An initial hurdle to our enterprise is that it is challenging to obtain closed-form expressions for ${(\Phi_{\mathrm{et}})}_{\kappa}^{\spadesuit}$, 
${(\Phi_{\ln})}_{\kappa}^{\spadesuit}$ and their inverses.
On the other hand, checking that $\Phi_{\mathrm{et}}$ and $\Phi_{\ln}$ are regularly varying functions is straightforward and we will use the the machinery developed in Section~\ref{sec:rv}. 

\begin{proposition}\label{prop:exp}
	The following items hold for any $\kappa > 0$.
	\begin{enumerate}[$(i)$]
		\item $\Phi_{\mathrm{et}}(\cdot,\kappa)$ belongs to $\RVz$ with index $1$ and $\Phi_{\ln}(\cdot, \kappa)$ belongs to $\RVz$ with index $0$.
		\item ${(\Phi_{\mathrm{et}})}_{\kappa}^{\spadesuit}(t)\to-\infty$ and ${(\Phi_{\ln})}_{\kappa}^{\spadesuit}(t)\to-\infty$ as $t\to 0_+$.
		\item The convergence rate afforded by
		$\Phi_{\mathrm{et}}$ is \textbf{almost linear} in the following sense: for any $r>0$, the following relations hold as $s \to +\infty$
		\begin{align*}
	 \sqrt{({(\Phi_{\mathrm{et}})}_{\kappa}^{\spadesuit})^{-1}(-s)} = o(s^{-r}),\qquad
	 	e^{-rs} = o\left(\sqrt{({(\Phi_{\mathrm{et}})}_{\kappa}^{\spadesuit})^{-1}(-s)} \right).
		\end{align*}
		\item 
		The convergence rate afforded by $\Phi_{\ln}$ is \textbf{logarithmic}
		in the following sense: 
		there exists $\eta_1 > 0$, $\eta_2 > 0$ and $N$ such that for $s \geq N$, we have
		\begin{align*}
	\eta_1\left(\frac{1}{\ln(s)}\right)\leq \sqrt{({(\Phi_{\ln})}_{\kappa}^{\spadesuit})^{-1}(-s)} \le \eta_2\left(\frac{1}{\ln(s)}\right).	
		\end{align*}
	\end{enumerate}	
\end{proposition}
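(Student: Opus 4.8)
The strategy is to verify each item directly, leveraging the regular variation machinery of Section~\ref{sec:rv} and the explicit forms of $\frakg_{-\infty}$ and $\frakg_{\infty}$ near $0$.

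For item $(i)$, recall that near $0$ we have $\Phi_{\mathrm{et}}(a,\kappa) = \rho_1(\kappa)(-a\ln a)$ and $\Phi_{\ln}(a,\kappa) = \rho_2(\kappa)(-1/\ln a)$. For the first, I would compute $\lim_{a\to 0_+}\frac{-\lambda a\ln(\lambda a)}{-a\ln a} = \lim_{a\to 0_+}\lambda\cdot\frac{\ln\lambda + \ln a}{\ln a} = \lambda$, so the index is $1$; note $-a\ln a$ is indeed continuous, monotone increasing on $(0,1/e)$, and vanishes at $0$ (matching the hypotheses needed for the $\RVz$ framework). For the second, $\lim_{a\to 0_+}\frac{-1/\ln(\lambda a)}{-1/\ln a} = \lim_{a\to 0_+}\frac{\ln a}{\ln \lambda + \ln a} = 1$, so the index is $0$. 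This is essentially a routine limit computation.

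For item $(ii)$, I would invoke Proposition~\ref{prop:asym}. For the exponential cone feasibility problems, $C = \expCone\cap\stdAffine_i$ is not the whole space (it is low-dimensional), and we can pick any $x^0\notin C$; then Proposition~\ref{prop:asym} directly gives ${(\Phi_{\mathrm{et}})}_{\kappa}^{\spadesuit}(t)\to-\infty$ and ${(\Phi_{\ln})}_{\kappa}^{\spadesuit}(t)\to-\infty$ as $t\to 0_+$, provided $\kappa$ is large enough; for smaller $\kappa$ one can still argue via the same estimate $\phi_{\kappa,\Phi}^-(s)\le s$ near $0$ since the key inequality $d(x)\le\Phi(d(x),\kappa)$ only requires $C\subseteq C_i$, which holds here (both $\expCone\supseteq C$ and $\stdAffine_i\supseteq C$), so the argument of Proposition~\ref{prop:asym} applies for every $\kappa>0$ once one notes that $d(\cdot)^2$ takes small positive values on the relevant ball. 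Alternatively, I would simply apply Remark~\ref{rem:as}.

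Item $(iii)$ is where I would apply Theorem~\ref{thm:upper}$(iii)$: since $\Phi_{\mathrm{et}}(\cdot,\kappa)\in\RVz$ with index $\rho = 1$ (and is continuous), the theorem yields $\sqrt{({(\Phi_{\mathrm{et}})}_{\kappa}^{\spadesuit})^{-1}(-s)} = o(s^{-r})$ for every $r>0$, giving the first relation. For the second relation $e^{-rs} = o(\sqrt{({(\Phi_{\mathrm{et}})}_{\kappa}^{\spadesuit})^{-1}(-s)})$, which asserts the rate is \emph{not} faster than linear, I would compute ${(\Phi_{\mathrm{et}})}_{\kappa}^{\spadesuit}$ more carefully: from $\phi_{\kappa,\Phi_{\mathrm{et}}}(t) = \rho_1(\kappa)^2 t(\ln t)^2/4$ near $0$ (using $\frakg_{-\infty}(\sqrt t) = -\sqrt t\ln\sqrt t = -\tfrac12\sqrt t\ln t$), invert to get $\phi_{\kappa,\Phi_{\mathrm{et}}}^-(s)$ which behaves like $s/(\text{polylog})$ near $0$, then integrate $1/\phi^-$ and note the integral diverges only logarithmically faster than $\int 1/s\,ds = \ln s$; concretely $-{(\Phi_{\mathrm{et}})}_{\kappa}^{\spadesuit}(t)$ grows like $(\text{const})\cdot(-\ln t)/(\text{loglog})$-type terms but is bounded above by $c(-\ln t) + c'\ln(-\ln t)$ for small $t$, hence $({(\Phi_{\mathrm{et}})}_{\kappa}^{\spadesuit})^{-1}(-s) \geq e^{-cs}\cdot(\text{subexponential correction})$, which dominates $e^{-2rs}$, say. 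I expect this lower-bound estimate to be the \textbf{main obstacle}, since Theorem~\ref{thm:upper} only supplies an upper bound when $\rho=1$; I would handle it by a direct Potter-bound-style estimate on $\phi_{\kappa,\Phi_{\mathrm{et}}}^-$ near $0$, or by observing that $\Phi_{\mathrm{et}}(a,\kappa) \geq a$ near $0$ fails but $\Phi_{\mathrm{et}}(a,\kappa)\le C a^{1-\epsilon}$ holds for every $\epsilon>0$, so comparing with a H\"olderian bound of exponent $1-\epsilon$ via Theorem~\ref{thm_comp}$(c)$ shows the $\Phi_{\mathrm{et}}$-rate is no faster than the $(1-\epsilon)$-H\"olderian rate, i.e.\ no faster than $k^{-1/(2((1-\epsilon)^{-1}-1))}$-type, which for $\epsilon$ small is slower than any $e^{-rs}$; taking $\epsilon\to 0$ gives the claim.

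Finally, item $(iv)$ is a direct application of Theorem~\ref{thm:lograte}: since $\Phi_{\ln}$ is a logarithmic error bound function with exponent $\gamma = 1$ (because $\Phi_{\ln}(a,\kappa) = \rho_2(\kappa)(-1/\ln a)$ for small $a$, matching Definition~\ref{def:log} with $\gamma=1$), Theorem~\ref{thm:lograte} immediately yields constants $\eta_1,\eta_2,N$ with $\eta_1(1/\ln s)\le\sqrt{({(\Phi_{\ln})}_{\kappa}^{\spadesuit})^{-1}(-s)}\le\eta_2(1/\ln s)$ for $s\ge N$, which is exactly the assertion. The only thing to check is that $\Phi_{\ln}$ satisfies all the structural hypotheses of Definition~\ref{def:log} (right form near $0$ for every fixed $b>0$), which is immediate from \eqref{eq:phi_exp} and the definition of $\frakg_{\infty}$.
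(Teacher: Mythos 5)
Items $(i)$, $(ii)$ and $(iv)$, together with the first relation in $(iii)$, are essentially the paper's argument: $(i)$ by the limit computation in \eqref{eq:rvz}, $(ii)$ via Proposition~\ref{prop:asym} (the paper does this slightly more cleanly: since $0$ lies in both feasible sets, $\dist(0,C)=0$, and since both sets have empty interior one can pick $x^0\notin C$ with $\norm{x^0}\leq\kappa$, so no case distinction on the size of $\kappa$ is needed), the first relation of $(iii)$ via Theorem~\ref{thm:upper}$(iii)$, and $(iv)$ via Theorem~\ref{thm:lograte} with $\gamma=1$.

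The genuine gap is your treatment of the second relation in $(iii)$. Your direct route rests on a miscalculation: with $\phi_{\kappa,\Phi_{\mathrm{et}}}(t)=\rho_1(\kappa)^2\,t(\ln t)^2/4$ near $0$ one gets $1/\phi^{-}_{\kappa,\Phi_{\mathrm{et}}}(s)\asymp (\ln s)^2/s$, and $\int (\ln s)^2 s^{-1}\,ds=(\ln s)^3/3$, so $-{(\Phi_{\mathrm{et}})}_{\kappa}^{\spadesuit}(t)$ grows like a constant times $|\ln t|^3$ — not like $c(-\ln t)+c'\ln(-\ln t)$, and not ``only logarithmically faster'' than $\ln s$. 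Moreover, even if a bound of the form $({(\Phi_{\mathrm{et}})}_{\kappa}^{\spadesuit})^{-1}(-s)\geq e^{-cs}\cdot(\text{correction})$ held for some fixed $c$, it would only give $e^{-rs}=o(\cdot)$ for $r>c/2$, whereas the claim must hold for \emph{every} $r>0$. Your fallback comparison also points the wrong way: $a^{1-\epsilon}$ is \emph{not} $o(-a\ln a)$ as $a\to 0_+$ (the quotient $a^{-\epsilon}/(-\ln a)\to\infty$); rather $\Phi_{\mathrm{et}}(a,\kappa)=o(a^{1-\epsilon})$, so Theorem~\ref{thm_comp}$(b)$ yields an \emph{upper} bound on $({(\Phi_{\mathrm{et}})}_{\kappa}^{\spadesuit})^{-1}$, i.e.\ it reproves the first relation and says nothing about the lower bound. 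The paper's fix is to compare in the opposite direction: for each $r>0$ set $\Phi_1(t,\kappa):=rt$; then $rt=o(\Phi_{\mathrm{et}}(t,\kappa))$ as $t\to 0_+$, $\sqrt{({(\Phi_1)}_{\kappa}^{\spadesuit})^{-1}(s)}=e^{s/(2r^2)}$ by the computation after \eqref{eq:phi_spade_lin}, and Theorem~\ref{thm_comp}$(b)$ (whose hypotheses hold, since $\Phi_{\mathrm{et}}(\cdot,\kappa)\in\RVz$ with index $1$ and item $(ii)$ gives the divergence of both $\spadesuit$-functions) yields $e^{s/(2r^2)}=o\bigl(\sqrt{({(\Phi_{\mathrm{et}})}_{\kappa}^{\spadesuit})^{-1}(s)}\bigr)$ as $s\to-\infty$; since $r$ is arbitrary this is exactly the second relation. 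Alternatively, your direct computation, carried out correctly, gives $({(\Phi_{\mathrm{et}})}_{\kappa}^{\spadesuit})^{-1}(-s)$ decaying like $\exp(-\mathrm{const}\cdot s^{1/3})$, which also suffices (and is sharper), but as written your estimate does not establish the claim.
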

\begin{proof}
	That item~$(i)$ holds can be readily checked
	by computing the limit in \eqref{eq:rvz}.
Next, we will use 	Proposition~\ref{prop:asym} to verify item~$(ii)$. We note that the feasible sets of \eqref{eq:feas1} and \eqref{eq:feas2} both contain the origin, so $\dist(0,\expCone\cap  \stdAffine_1) = \dist(0,\expCone \cap  \stdAffine_2) = 0$. Furthermore, both feasible regions are contained in two-dimensional sets, so  $\expCone\cap  \stdAffine_1$ and $\expCone\cap  \stdAffine_2$ have empty interior. In particular, there are points $p_1,p_2$ with $\norm{p_1} \leq \kappa, \norm{p_2} \leq \kappa$ such that $p_1\not \in \expCone\cap  \stdAffine_1$ and $p_2 \not \in \expCone\cap  \stdAffine_2$.  This shows that $\kappa$ satisfies the inequality in the statement of Proposition~\ref{prop:asym} for both $\Phi_{\mathrm{et}}$ and $\Phi_{\ln}$, which proves the desired limits.
	
	We move on to item~$(iii)$ and 
	let $r > 0$ be arbitrary.
	From item~$(iii)$ of 	Theorem~\ref{thm:upper}, we have $\sqrt{({(\Phi_{\mathrm{et}})}_{\kappa}^{\spadesuit})^{-1}(-s)} = o(s^{-r}) $ as $s \to+\infty$.
	Next, let $\Phi_1(t,\kappa) := rt$, so that $\Phi_1$ is a strict error bound function.
	Following the computations 
	after \eqref{eq:phi_spade_lin}, we have 
	$\sqrt{({(\Phi_1)}_{\kappa}^{\spadesuit})^{-1}(s)} = e^{s/(2r^2)}$. 
	We have  $rt = o(
	\Phi_{\mathrm{et}}(t,\kappa))$ as $t \to 0_+$.
	By Theorem~\ref{thm_comp},
	we have \[
	e^{s/(2r^2)} = o\left(\sqrt{({(\Phi_{\mathrm{et}})}_{\kappa}^{\spadesuit})^{-1}(s)} \right),
	\]
	as $s \to -\infty$.	Since $r$ is arbitrary, this completes item $(iii)$. 
	
	Finally, item $(iv)$ follows from 
	Theorem~\ref{thm:lograte} because 
	$\Phi_{\ln}$ corresponds to a logarithmic error bound with exponent $1$. 
\qed\end{proof}
As an example, suppose that we are interested in the behaviour of the cyclic projection  algorithm (CPA) when applied to \eqref{eq:feas1} and 
	\eqref{eq:feas2}. 
	We will denote the iterates generated by CPA by $p^k$ and the initial iterate by $p^0$.
	In the numerical experiments that follow, we use the code developed by Friberg in order to compute the projection onto the exponential cone, see \cite{Fr21}.
	
	First, we consider \eqref{eq:feas1}.
	From item~$(i)$ of Theorem~\ref{thm_sp_4} and item~$(iii)$ of Proposition~\ref{prop:exp}, $\dist(p^k, \expCone \cap \stdAffine_1 )$ goes to $0$ ``almost linearly'' in the sense that 	the rate is faster than $k^{-r}$ for any $r > 0$. To check this empirically, we let $p^{0} = (1,1,1)$ and plot in Figure~\ref{fig:entropic}  the iteration number $k$ against  $\dist(p^k, \expCone \cap \stdAffine_1 )$ (which can be computed exactly in this example). Both axes are in log scale, so that 
	$k^{-r}$ appears as a straight line for any $r$. Figure~\ref{fig:entropic} shows that, as predicted by theory, $\dist(p^k, \expCone \cap \stdAffine_1 )$	goes to $0$ faster than any sublinear rate. Item~$(iii)$ of Proposition~\ref{prop:exp} also gives a lower bound to 	$\sqrt{({(\Phi_{\mathrm{et}})}_{\kappa}^{\spadesuit})^{-1}(-s)}$ and tells us that this function goes to $0$ slower than $e^{-rs}$ for any $r$. Now, a lower bound to  $\sqrt{({(\Phi_{\mathrm{et}})}_{\kappa}^{\spadesuit})^{-1}(-s)}$ does not necessarily lead to a lower bound to $\dist(p^k, \expCone \cap \stdAffine_1 )$, so we cannot immediately refute the possibility that $\dist(p^k, \expCone \cap \stdAffine_1 )$ goes to $0$ linearly. However using a plot where only $y$-axis is in log-scale, we see indication that the convergence rate of $\dist(p^k, \expCone \cap \stdAffine_1 )$ is indeed not linear, see Figure~\ref{fig:entropic_lin}.  In this example, it seems that $\sqrt{({(\Phi_{\mathrm{et}})}_{\kappa}^{\spadesuit})^{-1}(-s)}$ closely reflects the true convergence rate.
	
	\begin{figure}\centering
		\begin{subfigure}[b]{0.45\textwidth}
			\centering
			\includegraphics[width=\textwidth]{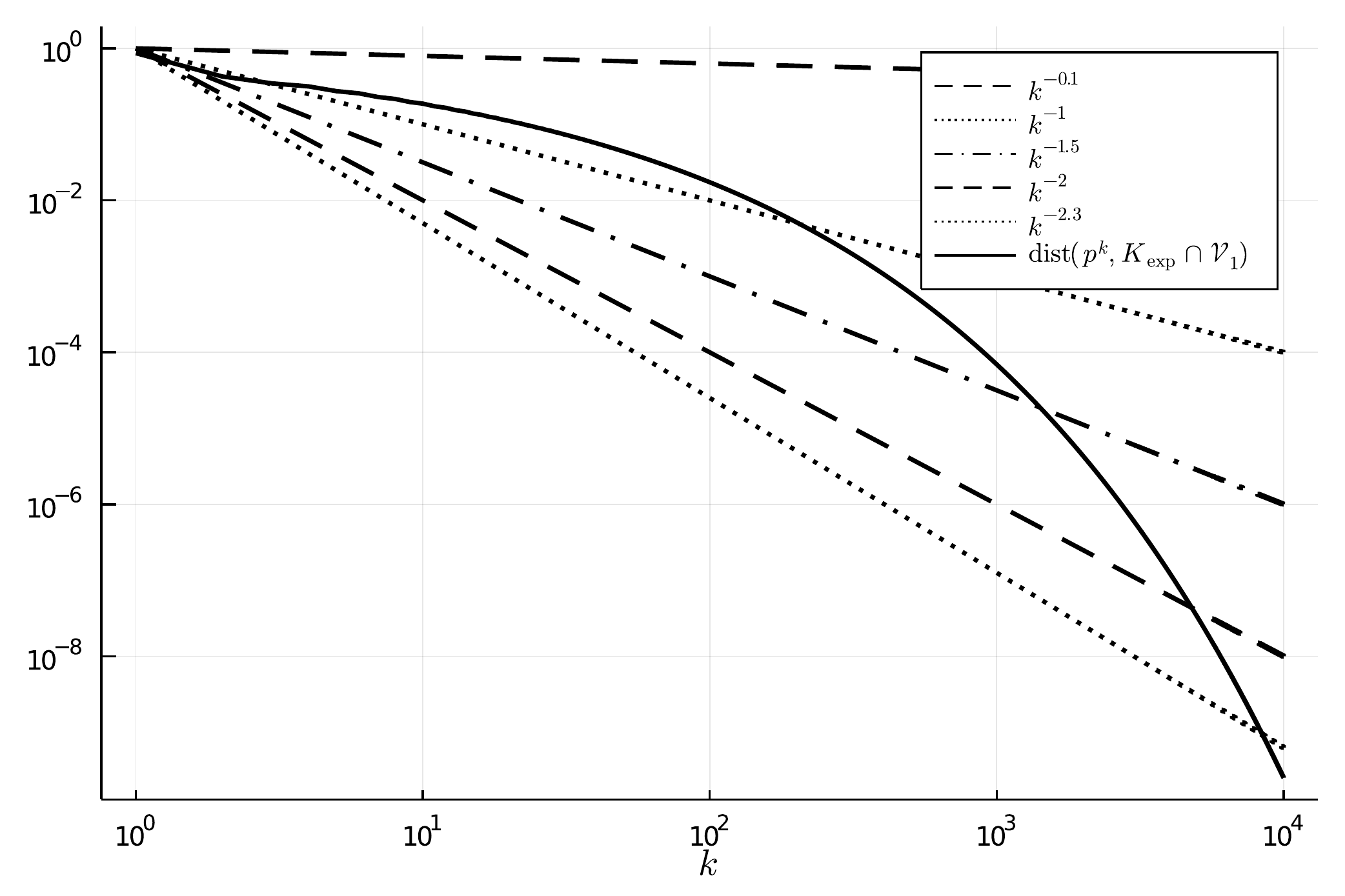}
\caption{Log-log plot of $\dist(p^k, \expCone \cap \stdAffine_1 )$. Dashed and dotted lines correspond to $k^{-r}$ for a few values of $r$.}\label{fig:entropic}
		\end{subfigure}	
		\begin{subfigure}[b]{0.45\textwidth}
		\centering
		\includegraphics[width=\textwidth]{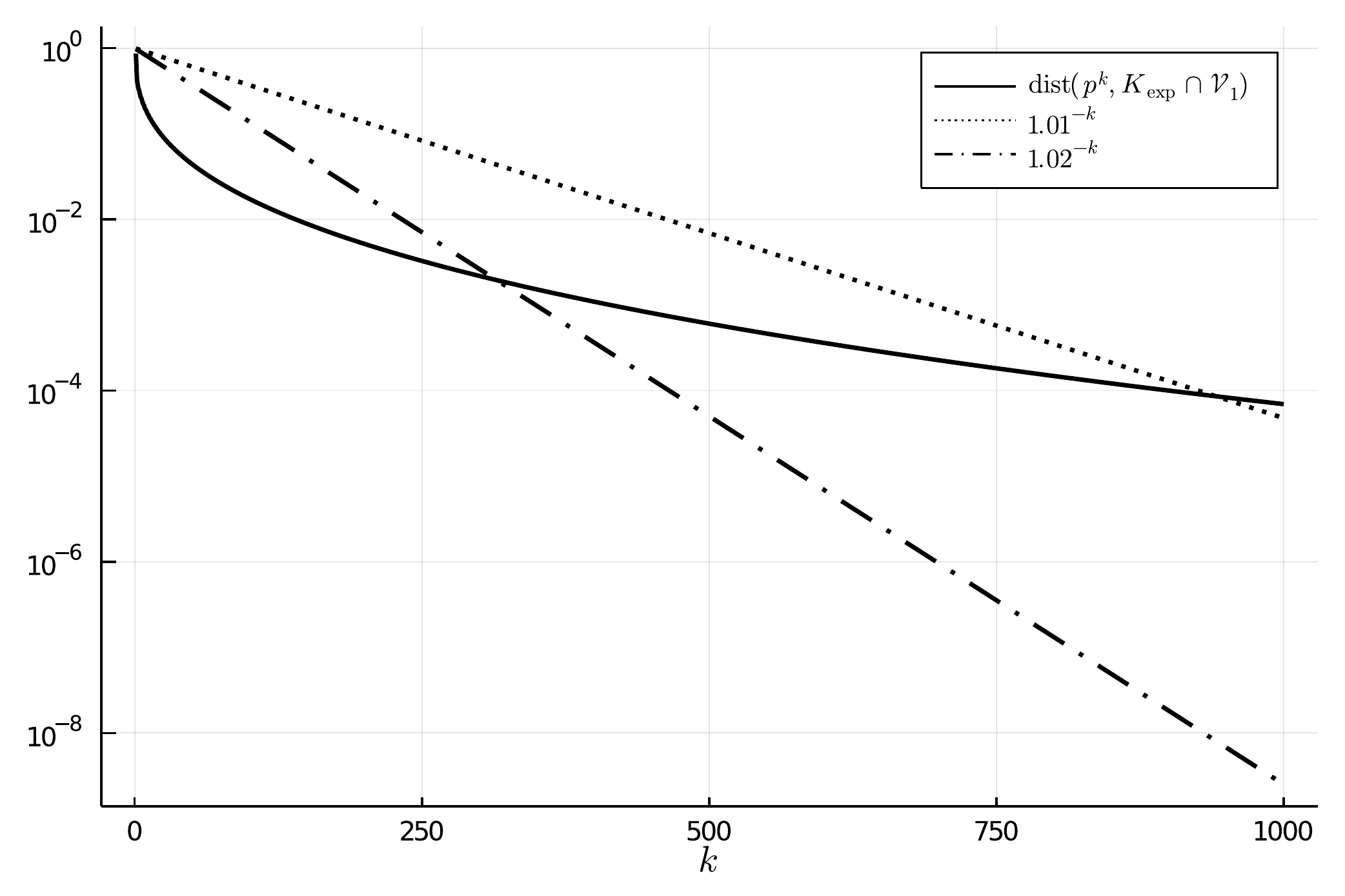}
		\caption{Plot of $\dist(p^k, \expCone \cap \stdAffine_1 )$, where only the $y$-axis is in log scale. Functions of the form $c^{-k}$ appear as straight lines. 
		}\label{fig:entropic_lin}
	\end{subfigure}	
	\caption{Behavior of CPA applied to \eqref{eq:feas1}.  Starting point is $(1,1,1)$.}
	\end{figure}
	
	

	
	Next, we move on to \eqref{eq:feas2}.
By item~$(iv)$ of Proposition~\ref{prop:exp}, we have that 
the convergence rate is at least logarithmic.
In principle, this does not exclude the possibility that the true convergence rate
of $\dist(p^k, \expCone \cap \mathcal{V}_2)$ is faster.   
	However, Figure~\ref{fig:log} suggests that $\dist(p^k, \expCone \cap \stdAffine_2 )$ goes to $0$ slower than $k^{-r}$ for any $r > 0$, which again suggests that $({(\Phi_{\ln})}_{\kappa}^{\spadesuit})^{-1}(-s)$ is reflective of the true convergence rate. 

	\begin{figure}
	\centering
	\includegraphics[width=0.5\columnwidth]{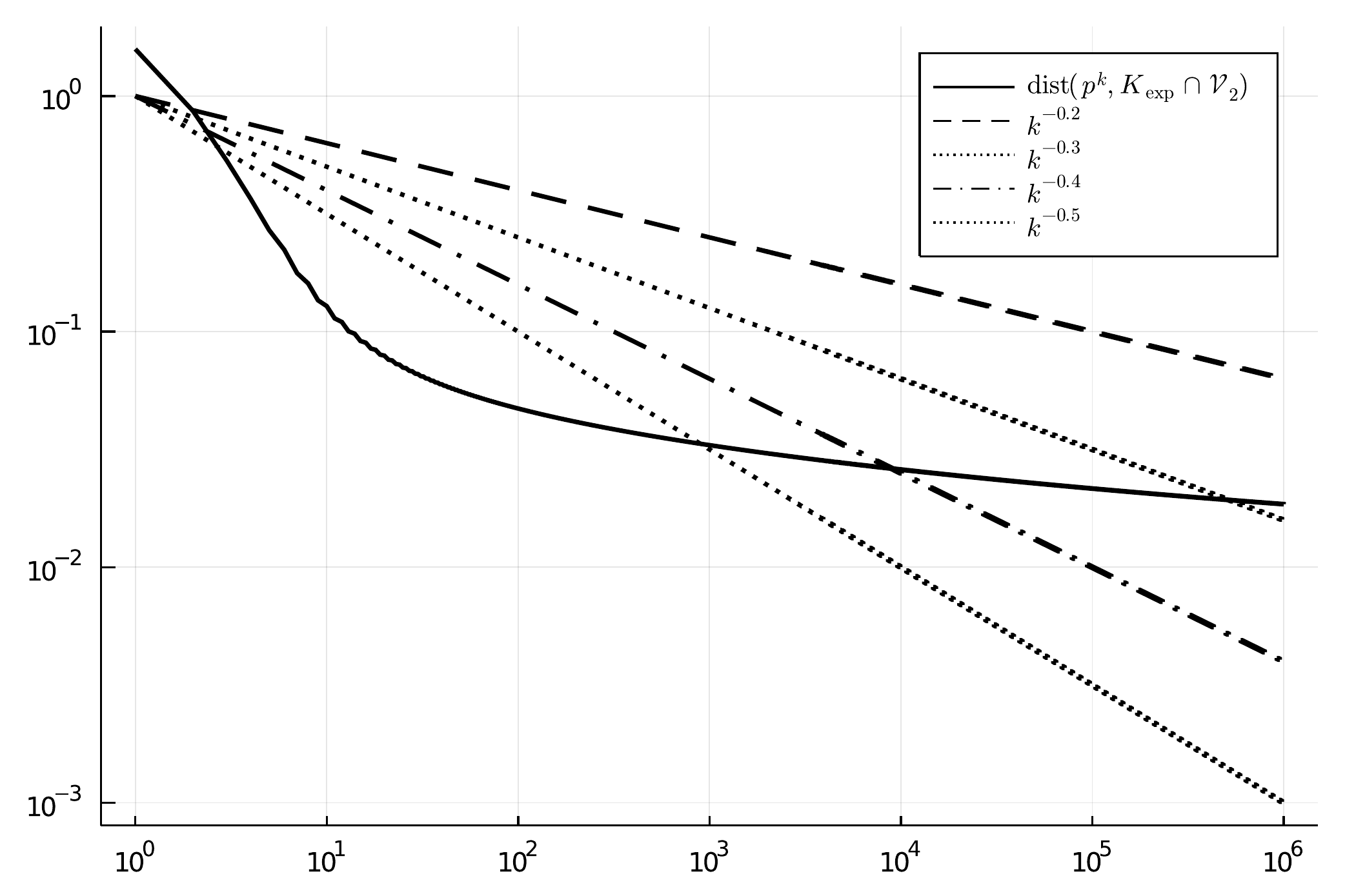}
	\caption{Log-log plot of $\dist(p^k, \expCone \cap \stdAffine_2 )$ for the iterates generated by CPA. Starting point is $(1,1,1)$. Dashed and dotted lines correspond to $k^{-r}$ for a few values of $r$.}\label{fig:log}
\end{figure}

\begin{remark}[On the exponential cone and beyond]\label{rem:exp}
The exponential cone is a building block for modelling many important problems related to entropy optimization, geometric programming and others, see \cite{Ch09,CS17,MC2020}. 
For example, the Kullback-Leibler divergence between two nonnegative vectors $x,y\in \R^n$ is defined as $D(x,y) \coloneqq \sum _{i} x_i \ln(x_i/y_i) $ and its epigraph is often modelled using $n$ exponential cones as follows:
\[
t \geq t_1+\cdots+ t_n, \quad (-t_i,x_i,y_i)  \in \expCone, i \in {1,\ldots, n},
\]
as indicated, for example, in \cite[Section~1.1]{CS17} and \cite[Chapter~5]{MC2020}.
In particular, the problem of minimizing the Kullback-Leibler divergence subject to linear constraints on $x$ and $y$ can be expressed as a conic linear program (CLP) over a product of exponential cones.
Notably, in  \cite{LYBV16}, the authors found that nearly one third of a library of more than $300$ instances of mixed integer continuous optimization problems can be modelled using mixed integer conic formulations with exponential cone constraints, see Table~1 therein. 
Certain relaxations of these problems naturally lead to CLPs over a direct product of exponential cones.  
Although we have discussed only the case of a single exponential cone, our results are representative of what can happen in more general settings.

There is now a larger movement towards algorithms, software and theory for non-symmetric cones with quite a few  solvers  supporting exponential cones, e.g.,  \cite{KT19,DY01,CKV21,MC2020}. 
These references also discuss other convex sets involving logarithms and exponentials, such as the the log-determinant cone in \cite{CKV21}.
On a more speculative note, it seems likely that some intersections involving those sets will have non-H\"olderian error bounds due to the presence of exponentials and logarithms. Therefore, the techniques discussed in this section and in Section~\ref{sec:rv} will  likely be applicable as well.
\end{remark}

\section{Concluding remarks}\label{sec:conc}

In this paper we proposed the notion of (strict) consistent error bounds. Under a strict consistent error bound, we established convergence rates for a family of algorithms for the convex feasibility problem \eqref{CFP}. The key idea is to construct an \emph{inverse smoothing function}  based on the corresponding consistent error bound function. Our analysis recovers several old results and also gives several new ones. We also apply the convergence results to conic feasibility problems in order furnish further links between the singularity degree of the underlying problem and the convergence rate of several algorithms. Another novel aspect is the usage of regularly varying functions, which allows to draw conclusions about convergence rates while avoiding certain complicated computations.
To conclude this paper, we first make some comparisons to approaches based on the KL-property.

\subsection{On the Kurdyka-{\L}ojasiewicz (KL) property and related concepts}
The Kurdyka-{\L}ojasiewicz (KL) property is an 
important and remarkable tool for convergence analysis used successfully in several works \cite{ABRS10,abs13,LP18}, so in this 
subsection we make a few comparisons in order to explain what could or what could (probably) not be done under the KL framework. 

First, there is a close relation between error bounds and the KL property in the presence of convexity.  As shown in  \cite[Theorem~30]{bdlm10} and \cite[Theorem~5]{BNPS17}, under certain conditions on $\varphi$, an error bound of the form ``$\dist(x,\,\argmin\,f) \le \varphi(f(x))$''
implies that $f$ satisfies the KL property with a desingularization function involving $\varphi$.
Under our setting, there are several candidates for $f$ but they will, in all likelihood, be functions involving terms of the form $\max_i\dist(x,\,C_i)$ or positive combinations of the $\dist(x,C_i)^2$, for example.

The choice of $f$ must be typically tailored to the target algorithm. 
Our understanding is that most of the algorithms in Section~\ref{sec:proj_alg} would require different
choices of $f$ in order for the analysis to be carried out 
under the KL framework. Finding the appropriate $f$ can be nontrivial, as illustrated by the merit function for the Douglas-Rachdford algorithm in \cite{LP16}.
It might also be \emph{impossible} in some cases.
For example, based on a result by Baillon, Combettes and Cominetti~\cite{BCC12}, it is claimed in a footnote in \cite{BNPS17} that there is no potential function corresponding to the cyclic projection algorithm (CPA, see Example~\ref{remark_proj}) for more than two sets.

Once the appropriate potential function is identified, 
it is necessary to show that certain conditions hold for the potential function along the sequence, e.g., the sufficient decrease condition and the relative error condition, see \cite{ABRS10,abs13,ochs19}.
These properties and Assumption~\ref{assp} have a similar motivation: ensuring that the sequence generated by the underlying algorithm satisfies some desirable properties.

If a convergence rate is desired, one usually has to show that the potential function satisfies the KL property with some KL \emph{exponent}. The general KL property holds under relatively mild conditions, but identifying the exponent (if one exists) is a more challenging task, see \cite{LP18}. Due to \cite[Theorem~5]{BNPS17}, existence of a KL exponent is equivalent to the validity of a H\"olderian error bound, so  establishing the former or the latter are tasks of comparable difficulty. We note that the logarithmic error bound example in \eqref{eq:feas2} can be used to construct a function which does not have a KL exponent, see \cite[Example~4.22]{LiLoPo20}. Similarly, $f_{\gamma}$ in Example~\ref{ex:log2} has no KL exponent. In particular, the convergence rate results based on the existence of a KL exponent do not seem applicable to \eqref{eq:feas2} nor to Example~\ref{ex:log2}.

%
That said, it is possible to analyze convergence rates without assuming that a KL exponent holds, see \cite[Theorem~24]{bdlm10} and \cite[Theorem~14]{BNPS17} for results which only rely on the desingularizing function $\varphi$ without assumptions on the format of $\varphi$. And, interestingly, the existence of $\varphi$ can, sometimes, be characterized via certain integrals involving subgradient curves, see \cite[Theorem~18]{bdlm10}. However, 
we do not immediately see a connection between the integrals appearing 
in \cite[Theorem~18]{bdlm10} and in \eqref{def_phi}.
We do note, however, that a certain optimal desingularizing function can be characterized via an integral, see \cite[Section~3.2]{ww20}. Similarly, 
if the {best consistent error bound function} in Proposition~\ref{prop:uni} is strict, it can be used 
to construct the inverse smoothing function $\Phi_{\kappa}^{\spadesuit}$ as in \eqref{def_phi}.
So both integrals seem to be able to capture optimal phenomena, under certain conditions.

 Another point is that the upper bounds in \cite[Theorem~24]{bdlm10} and \cite[Theorem~14]{BNPS17} include expressions of the format $\varphi(f(x^k) -\kappa)$ (for some constant $\kappa$), so they are still dependent on the iterate $x^k$ and it might be fair to say they require some work in order to get an explicit convergence rate in terms of $k$. 
In contrast, our upper bound on the convergence rate in \eqref{rate} does not rely on the iterate $x^k$ and only uses the iteration number $k$ itself, which gives a more explicit expression. The drawback is that one must deal with the $(\Phi_{\widehat{\kappa}}^\spadesuit)^{-1}$ term that appears in \eqref{rate}, which is indeed nontrival. Nevertheless, as shown in Section~\ref{sec:rv} and illustrated in Section~\ref{sec:exp}, there are ways of bypassing this difficulty if the consistent error bound function is a function of regular variation.

Finally, we remark that the KL inequality is, of course, heavily connected to semialgebraic geometry \cite{BCR98}, so one might wonder the extent to which our results
could also be obtained by imposing semialgebraic assumptions on $\Phi$ or on the sets $C_i$. Our assessment is that this seems unlikely, because the results in Section~\ref{sec:rv} are also applicable to sets and functions involving exponentials and logarithms (as in Example~\ref{ex:log2} and Section~\ref{sec:exp}), which are not semialgebraic in general.

%

\subsection{Future directions}
At last, we mention some possible future directions.
In the concluding remarks of \cite{blt17}, the authors mention the characterization of convergence rates in the absence of H\"olderian regularity as an area of future research. 
We believe that the tools developed in this paper are a step forward towards this research goal, since Theorem~\ref{thm_conv} is quite general. And, indeed, we were able to reason about convergence rates in non-H\"olderian settings as described in Sections~\ref{sec:log} and \ref{sec:exp}.

In addition, it might be fair to say that regular variation has been rarely explored in the context of optimization algorithms and we believe there is significant room for further exploration. 
For example, we showed that consistent error bound functions  always exist (Proposition~\ref{prop:uni}). It could be interesting to try to prove whether a \emph{regularly varying} consistent error bound function always exists as well.
Since regular variation is connected to upper bounds for the convergence rate (Theorem~\ref{thm:upper}), exploring this kind of question might lead to some insights on whether 
arbitrary slow convergence is possible in finite dimensions, which is another open problem 
mentioned in the conclusion of \cite{blt17}.

Finally, we believe it would be interesting to analyse convergence rates of other algorithms beyond projection methods. A natural candidate would be the \emph{Douglas-Rachford} (DR) algorithm \cite{DR56,LM79}, which was also extensively analyzed in \cite{blt17}. However, the convergence rate results obtained in \cite[Proposition~4.2]{blt17} require not only an error bound condition on the underlying sets, but also a semialgebraic assumption. 
This suggests that it might be hard to obtain convergence rates for the DR algorithm purely based on  consistent error bounds. 
On the other hand,   damped versions of the DR algorithm (see \cite[Section~5]{blt17} or \cite[Equation~(25)]{DY17}) might be more amenable to our techniques. In fact, sublinear rates were proved in \cite[Theorem~5.2]{blt17} when the underlying error bound is H\"olderian without the need of imposing extra assumptions, see also \cite[Remark~5.3]{blt17}. In view of this, we believe it is likely that a result analogous to Theorem~\ref{thm_conv} and suitable for  damped DR algorithms holds.
%

{\small{
		\subsection*{Acknowledgements}
We thank the referees and the associate editor for their  comments, which helped to improve the paper.
The authors would like to thank Masaru Ito and Ting Kei Pong for the feedback and helpful comments during the writing of this paper.
The first author is supported by ACT-X, Japan Science and Technology Agency (Grant No. JPMJAX210Q).
The second author is partially supported by the
JSPS Grant-in-Aid for Young Scientists 19K20217 and the Grant-in-Aid for Scientific Research (B)18H03206 and 21H03398.
}}

\appendix

\section{Proof of Lemma~\ref{inv_lemma}}\label{appendix_a}

\begin{proof}
The fact that $f^{-}(0) = 0$ follows from $f(0) = 0$ and the definition \eqref{inv_fun}. We also note that in
 \eqref{inv_fun}, if we increase $s$, the set after the `$\inf$' potentially shrinks, so $f^{-}$ is monotone nondecreasing. 	Next, we prove each item.
	
\begin{enumerate}[$(i)$]
	\item Fix any $s\in(0,\, \sup f)$. Suppose that $f^{-}(s) = 0$. By the definition \eqref{inv_fun}, given any $\epsilon_k > 0$, there exists $t_k\in[0,\, \epsilon_k]$ such that $f(t_k)\ge s$. Consequently, there exists a sequence $t_k\to 0_+$ with $f(t_k)\ge s > 0$. This together with $f(0) = 0$ contradicts the (right)-continuity of $f$ at $0$, and thus proves $(i)$.
	\item Let $s\ge 0, t \geq 0$ be such that $s\le f(t)$. Since $f$ is monotone increasing, $\sup f$ is never attained, which 
	implies  $0\le s\le f(t) < \sup f$. 
	Furthermore, by the definition \eqref{inv_fun},
	we have $f^{-}(s)\le t$. 
	
	\item Let $s\ge 0, t \geq 0$ be such that $s < \sup f$ and $f(t) < s$. By definition, $f^{-}(f(t)):=\inf\left\{u\ge 0: f(u)\ge f(t)\right\}$,
	therefore $f^{-}(f(t))\le t$. On the other hand, the strict monotonicity of $f$ implies that there is no $u < t$ with $f(u)\ge f(t)$. This implies $f^{-}(f(t))\ge t$ and thus $f^{-}(f(t)) = t$. Together with the monotonicity of $f^{-}$, we obtain $t = f^{-}(f(t))\le f^{-}(s)$.
	
	\item  Suppose that there exists some $\widebar{s}\in(0,\, \sup f)$ such that $f^{-}$ is not continuous at $\widebar{s}$. Since $f^{-}$ is monotone, both the left-sided limit $f^{-}(\widebar{s}-)$ and the right-sided limit $f^{-}(\widebar{s}+)$ exist and $f^{-}(\widebar{s}-) < f^{-}(\widebar{s}+)$. Fix any $t\in(f^{-}(\widebar{s}-),\, f^{-}(\widebar{s}+))$. From the monotonicity of $f^{-}$, there exists $\epsilon > 0$ such that
	whenever $s_1,s_2$ satisfy $0 < s_1 < \widebar{s} < s_2 < \sup f$ we have
	\begin{equation*}
	f^{-}(s_1) < t - \epsilon < t + \epsilon < f^{-}(s_2).
	\end{equation*}
	We now show that $f(t) = \widebar{s}$. Suppose that $f(t)\neq \widebar{s}$. Then either $f(t) < \widebar{s}$ or $f(t) > \widebar{s}$. If $f(t) < \widebar{s}$, let $s_1 = (f(t) + \widebar{s})/2 \in (f(t),\, \widebar{s})$. Thus, we know from item  $(iii)$ that $f^{-}(s_1)\ge t$, which contradicts $f^{-}(s_1) < t - \epsilon$. 
	
	If $f(t) > \widebar{s}$, let $s_2 = (f(t) + \widebar{s})/2 \in (\widebar{s},\, f(t))$. Then, from item~$(ii)$, we have $f^{-}(s_2) \le t$, which contradicts  $t + \epsilon < f^{-}(s_2)$. This proves  $f(t) = \widebar{s}$. The arbitrariness of  $t\in(f^{-}(\widebar{s}-),\, f^{-}(\widebar{s}+))$ contradicts the strict monotonicity of $f$. Consequently, $f^{-}$ is continuous on $(0,\, \sup f)$. 	
\end{enumerate}	
\qed\end{proof}

\bibliographystyle{abbrvurl}
\bibliography{bib_plain}

\end{document}